\newcommand{\R}{\ensuremath{\mathbb{R}}}
\newcommand{\N}{\ensuremath{\mathbb{N}}}
\newcommand{\s}{\Sigma}
\newcommand{\e}{\varepsilon}
\newcommand{\V}{\mathcal{V}}
\newcommand{\Cr}{\mathcal{C}^{r}}
\newcommand{\Xr}{\chi^{r}}
\newcommand{\Or}{\Omega^{r}}
\newcommand{\rn}[1]{\mathbb{R}^{#1}}
\newcommand{\er}{\mathcal{O}}
\newcommand{\p}{\varphi}
\newcommand{\ag}{\alpha}
\newcommand{\bg}{\beta}
\newcommand{\cg}{\gamma}
\newcommand{\dg}{\delta}
\newcommand{\sgn}{\textrm{sgn}}
\newtheorem {prop}  {Proposition}
\newtheorem {corollary}  {Corollary}
\newtheorem {lemma}  {Lemma}
\newtheorem {definition}  {Definition}
\newtheorem {remark}  {Remark}
\newtheorem {example} {Example}
\DeclareFontFamily{U}{tipa}{}
\DeclareFontShape{U}{tipa}{m}{n}{<->tipa10}{}
\newcommand{\arc@char}{{\usefont{U}{tipa}{m}{n}\symbol{62}}}%
\newcommand{\arc}[1]{\mathpalette\arc@arc{#1}}
\newcommand{\arc@arc}[2]{%
	\sbox0{$\m@th#1#2$}%
	\vbox{
		\hbox{\resizebox{\wd0}{\height}{\arc@char}}
		\nointerlineskip
		\box0
	}%
}
\definecolor{verde}{rgb}{0.0,0.5,0.0}
\definecolor{azul}{rgb}{0,0,128}
\definecolor{roxo}{rgb}{0.44,0.16,0.39}
\definecolor{vinho}{rgb}{0.5,0.0,0.13}
\definecolor{lilas1}{rgb}{0.6,0.33,0.73}
\definecolor{rosa}{rgb}{0.84,0.04,0.33}
\definecolor{mostarda}{rgb}{0.91,0.41,0.17}
\definecolor{mostarda2}{rgb}{1.0,0.66,0.07}
\newtheorem {mtheorem} {Theorem} 
\begin{document}

\title[Homoclinic-like Loops in 3D Filippov Systems]{On Typical Homoclinic-like loops in 3D Filippov Systems}

\author[O. M. L. Gomide]{Ot\'avio M. L. Gomide}
\address[OMLG]{Department of Mathematics, UFG, IME\\ Goi\^ania-GO, 74690-900, Brazil/ Department of Mathematics, Unicamp, IMECC\\ Campinas-SP, 13083-970, Brazil}
\email{otaviomleandro@gmail.com}

\author[M. A. Teixeira]{Marco A. Teixeira}
\address[MAT]{Department of Mathematics, Unicamp, IMECC\\ Campinas-SP, 13083-970, Brazil}
\email{teixeira@ime.unicamp.br}

%\author[M. Guardia]{Marcel Guardia}
%\address[MG]{ Departament de Matemàtica Aplicada I, Universitat Politècnica de Catalunya, Diagonal 647, 08028 Barcelona, Spain}
%\email{marcel.guardia@upc.edu}
%
%\author[T. M. Seara]{Tere M. Seara}
%\address[TS]{Departament de Matemàtica Aplicada I, Universitat Politècnica de Catalunya, Diagonal 647, 08028 Barcelona, Spain }
%\email{tere.m-seara@upc.edu }

\maketitle

\begin{abstract}
In this work a homoclinic-like loop of a piecewise smooth vector field passing through a typical singularity is analyzed. We have shown that such a loop is robust in one-parameter families of Filippov systems. The basin of attraction of this connection is computed as well as its bifurcation diagram. It is worthwhile to mention that this phenomenon has no counterpart in the smooth world and the techniques used in this analysis differ from the usual ones. 
\end{abstract}

\section{Introduction}

The study of global connections in smooth systems is a challenging problem which has been extensively studied throughout the last decades. In fact, the interest of the community in the detection of invariant minimal sets, as limit cycles and homoclinic/heteroclinic connections, has provided several tools which have contributed to the development of the Theory of Dynamical Systems. Nevertheless, despite considerable advances, there are still a lot of open problems concerning global phenomena, due to the richness and complexity of the dynamics associated to such objects. 

In the nonsmooth context, global phenomena can be responsible for  further complications in the dynamics of a piecewise smooth dynamical system. Roughly speaking, Filippov systems present new kinds of typical singular elements, such as the so-called $\s$-singularities (distiguished points of the switching manifold), which give rise to a rich extensive class of global connections having no counterparts in the smooth framework. The study of such objects present countless challenges and, maybe for such a reason, they are not frequently considered in the literature. It is worth mentioning that the comprehension of non-local connections between generic $\s$-singularities provides applications in several research lines in Piecewise Smooth Dynamical Systems as structural stability and generic bifurcation theory.

%
%
%
%plays a crucial role in the attempt to characterize the structurally stable $3D$ Filippov systems from a global point of view.

%
% due to its importance in the understanding of the dynamics of a smooth vector field. Roughly speaking, once the singular elements (singularities, limit cycles, etc.) of the system are detected, the dynamics of the system inside a region is generally determined by the existence or not of global connections between them.  
%
%In the nonsmooth context, one finds typical singular elements, such as the so-called $\s$-singularities, and thus, it gives rise to a rich extensive class of global connections which has no counterparts in the smooth framework. Also, the comprehension of non-local connections between generic $\s$-singularities plays a crucial role in the attempt to characterize the structurally stable $3D$ Filippov systems from a global point of view.

\subsection{Historical Facts}

Global connections to $\s$-singularities of planar Filippov systems have been studied in
\cite{BS16,AGN19,FPT15,LH13,PYZ13}.  More specifically, in  \cite{FPT15}, the bifurcation diagram of a loop at a generic $\s$-singularity, named fold-regular singularity (to be defined later), was described and, in \cite{BS16}, a study on a smoothing process of such an object was provided. It is worth mentioning that such loops also appeared in the  unfolding of degenerate phenomena (see \cite{BCT12}, \cite{NTZ18}). In \cite{AGN19}, loops passing through degenerate $\s$-singularities were considered and a method to deal with certain non-local structures in a general scenario was developed. 

As far as we know, homoclinic-like loops through a $\s$-singularity have not been treated for $3D$ Filippov systems. So, in light of the recent development in the planar case, we were encouraged to analyze analogous phenomena in dimension $3$, which usually present much more complexity than planar phenomena.
%
%
%planar systems in a general scenario is developed. As far as we know, this topic has not been treated for $3D$ Filippov systems in the literature. So, with the recent development of planar phenomena, it is natural to extend these studies to dimension $3$.
%
%
%In dimension $2$, there are plenty of works dealing with global connections to $\s$-singularities of Filippov systems. In fact, homoclinic-like loops at a generic $\s$-singularity have been studied in \cite{BS16,AGN19,FPT15,LH13,PYZ13}. More specifically, in  \cite{FPT15}, the bifurcation diagram of a loop at a fold-regular singularity (a kind of $\s$-singularity) is described and, in \cite{BS16}, a study on the regularization of such an object was provided. It is worth mentioning that such loops appear in the  unfolding of degenerate phenomena (see \cite{BCT12}, \cite{NTZ18}). In \cite{AGN19}, a method to deal with polycycles in planar system in a general scenario is developed. As far as we know, this topic has not been treated for $3D$ Filippov systems in the literature. So, with the recent development of planar phenomena, it is natural to extend these studies to dimension $3$.

%and they used such a mechanism to study the generic bifurcation of some codimension two global connections to $\s$-singularities. 

%As far as we know, the generic bifurcation of global connections to $\s$-singularities in $3D$ Filippov systems is not treated in the literature. So, with the recent development of planar phenomena, it is natural to extend these studies to dimension $3$.

\subsection{Description of the Main Results}

Now, a rough description of the results of this work is provided.  We consider Filippov systems of the form 
$$Z(p)=F(p)+\sgn(f(p)) G(p),\ p\in\rn{3},$$
where $F=\frac{X+Y}{2}$, $G=\frac{X-Y}{2}$, $f:\rn{3}\rightarrow\rn{}$ is a $\Cr$ function having $0$ as a regular value and $X,Y$ are $\Cr$ vector fields. In this case, we denote $Z=(X,Y)$ and $\s=h^{-1}(0)$  is the switching manifold of $Z$. For our purposes, it is sufficient to assume $r\geq 2$.

In this work, we study Filippov systems $Z=(X,Y)$ having a homoclinic-like loop $\Gamma$ at a \textbf{fold-regular singularity} $p$. Generally speaking, such a singularity happens when $X$ (resp. $Y$) has a quadratic contact with the switching manifold $\s$ at $p$ and $Y$ (resp. $X$) is transverse to $\s$ at $p$.  It is worthwhile to mention that, in dimension $3$, a fold-regular singularity $p$ is contained in a curve $\cg$ of fold-regular singularities of $Z$ in $\s$, named \textbf{fold curve}. Since $Z$ has a loop $\Gamma$ at $p$, the fold curve $\cg$ is mapped onto a curve $\zeta\subset\s$ through orbits of $X$ and $Y$ and they intersect at $p$ (see Figure \ref{generic_cycle_fig}). In this paper we focus on the case where $\gamma\cap\zeta$ transversally at $p$, ensuring that such a loop occurs in a robust scenario.

\begin{figure}[h!]
	\centering
	\bigskip
	\begin{overpic}[width=7cm]{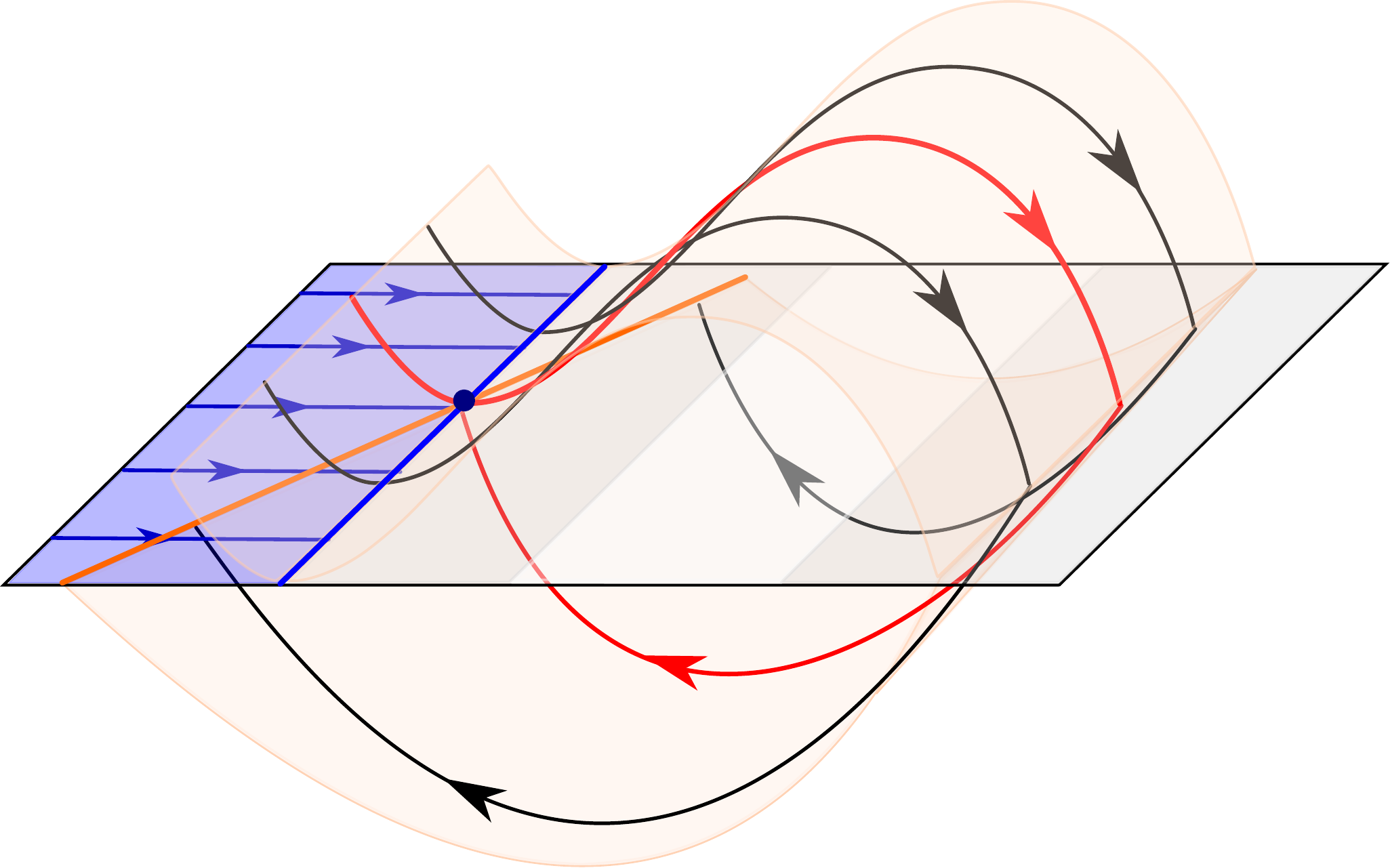}
		%							\begin{overpic}[grid,tics=5,width=13cm]{Figures/cycles.pdf}		
		\put(70,52){{\footnotesize $\Gamma$}}		
		\put(30,35){{\scriptsize $p$}}		
		%		\put(25,-3){{\footnotesize $(a)$}}		
		%		\put(75,-3){{\footnotesize $(b)$}}		
		%		\put(16.2,10.5){{\scriptsize $p_0$}}	
		%		\put(38,10.5){{\scriptsize $q_0$}}			
		%		\put(0,10){{\footnotesize $\s$}}	
		\put(0,26){{\footnotesize $\s$}}	
		\put(29,24){{\footnotesize $\cg$}}	
		\put(9,25){{\footnotesize $\zeta$}}	
		%		\put(26,22){{\footnotesize $\Gamma_0$}}			
		\put(5,5){{\footnotesize $Y$}}	
		\put(5,40){{\footnotesize $X$}}							
		
	\end{overpic}
	\bigskip
	\caption{A homoclinic-like loop $\Gamma$ of $Z$ at a fold-regular singularity $p$.}	\label{generic_cycle_fig}
\end{figure} 

Let $\Lambda_1$ be the class of Filippov systems $Z$ having a homoclinic-like loop $\Gamma$ at a fold-regular singularity $p$  satisfying the robustness scheme above (plus some technical conditions). We show that $Z\in\Lambda_1$ is generic in one-parameter families. We mean $\Lambda_1$ is a codimension one submanifold of the space of all $3D$ Filippov systems $\Or$.

The bifurcation diagram of $Z\in\Lambda_1$  around $\Gamma$ is exhibited  and the basin of attraction of $\Gamma$ is computed. It is worth mentioning that the use of sliding features of $Z$ is crucial for obtaining such results.

Finally, a notion of weak equivalence in $\Lambda_1$ is introduced and aspects of modulus of stability are discussed. Moreover, we conclude that there are infinitely many distinct topological types in $\Lambda_1$ under the weak equivalence relation.
%We prove that $Z_0\in\Lambda_1$ is generic in one-parameter families.% It means that, given a $\Cr$ family of Filippov systems $\mathcal{Z}(\lambda)$, $\lambda\in[-\e_0,\e_0]$,  such that $\mathcal{Z}(0)=Z_0\in\Lambda_1$,  then any one-parameter family $\widetilde{\mathcal{Z}}$ sufficiently near to $\mathcal{Z}$ (in the $\Cr$ topology) has a point $\lambda_0\in[-\e_0,\e_0]$ such that $\widetilde{\mathcal{Z}}(\lambda_0)\in\Lambda_1$.

%We prove that $Z_0\in\Lambda_1$ is generic in one-parameter families.We provide the bifurcation diagram of $Z_0\in\Lambda_1$ around $\Gamma_0$ under certain generic conditions. Also, we compute the basin of attraction of $\Gamma_0$. It is worth mentioning that, the use of sliding features of $Z_0$ is crucial to obtain these results.

%
%
%
%which is generic in $\Cr$ one-parameter families. We also provide the bifurcation diagram of such connection and we obtain a modulus of stability in $\Lambda_1$.
%
%
%We consider $3D$ Filippov systems having a global 
%connection 
%
%
%
%
%
%- Conexoes globais: importancia/intro
%
%- Ciclo 2D passando pela Fold-Regular: Ponce, Tere/Carles, Douglas/Iris, Buzzi/Carvalho
%
%- Kristian/ Hogan : conexao em R3

%In the nonsmooth context, the global connections between $\s$-singularities.... Notice that these aspects have no counterparts in smooth systems.

This paper is organized as follows. Section \ref{premsec} is devoted to present some basic concepts on Filippov systems. In Section \ref{globalconsec} we discuss some scenarios where a $3D$ Filippov system admits a global connection involving a fold-regular singularity. In Section \ref{main_sec} we present the Filippov systems approached in this work and we state our main results. Section \ref{struct_sec} is devoted to present the necessary tools to prove our results. Finally, in Section \ref{proofsqg} we prove the main results stated in Section \ref{main_sec}.% In Section \ref{thmBsec} we illustrate the results given in Section \ref{main_sec} by means of some examples. Finally, in Section \ref{proofsqg} we prove the main results stated in Section \ref{main_sec}.

\section{Preliminaries} \label{premsec}

Let $M$ be an open bounded  connected set of $\rn{3}$ and let $f:M\rightarrow \rn{}$ be a smooth function having $0$ as a regular value. Therefore, $\s=f^{-1}(0)$ is an embedded codimension one submanifold of $M$ which splits it in the sets $M^{\pm}=\{p\in M; \pm f(p)>0\}$.

A \textbf{germ of vector field} of class $\Cr$ at a compact set $\Lambda\subset M$ is an equivalence class $\widetilde{X}$ of $\Cr$ vector fields defined in a neighborhood of $\Lambda$. More specifically, two $\Cr$ vector fields $X_{1}$ and $X_{2}$ are in the same equivalence class if:
\begin{itemize}
	\item $X_{1}$ and $X_{2}$ are defined in neighborhoods $U_{1}$ and $U_{2}$ of $\Lambda$ in $M$, respectively;
	\item there exists a neighborhood $U_3$ of $\Lambda$ in $M$ such that $U_{3}\subset U_{1}\cap U_{2}$;
	\item $X_{1}|_{U_{3}}=X_{2}|_{U_{3}}$.
\end{itemize}
In this case, if $X$ is an element of the equivalence class $\widetilde{X}$, then $X$ is said to be a representative of $\widetilde{X}$.  
The set of germs of vector fields of class $\Cr$ at $\Lambda$ will be denoted by $\Xr(\Lambda)$, or simply $\Xr$. We endow $\Xr$ with the $\Cr$ topology. For the sake of simplicity, a germ of vector field $\widetilde{X}$ will be referred simply by its representative $X$.

Analogously, a \textbf{germ of piecewise smooth vector field} of class $\Cr$ at a compact set $\Lambda\subset M$ is an equivalence class  $\widetilde{Z}=(\widetilde{X},\widetilde{Y})$ of pairwise $\Cr$ vector fields defined as follows: $Z_{1}=(X_{1},Y_{1})$ and $Z_{2}=(X_{2},Y_{2})$ are in the same equivalence class if, and only if,
\begin{itemize}
	\item $X_{i}$ and $Y_{i}$ are defined in neighborhoods $U_{i}$ and $V_{i}$ of $\Lambda$ in $M$, respectively, $i=1,2$;
	\item there exist neighborhoods $U_{3}$ and $V_{3}$ of $\Lambda$ in $M$ such that $U_{3}\subset U_{1}\cap U_{2}$ and $V_{3}\subset V_{1}\cap V_{2}$;
	\item $X_{1}|_{U_{3}\cap \overline{M^{+}}}=X_{2}|_{U_{3}\cap\overline{M^{+}}}$ and $Y_{1}|_{V_{3}\cap \overline{M^{-}}}=Y_{2}|_{V_{3}\cap\overline{M^{-}}}$.
\end{itemize}
In this case,  if $Z=(X,Y)$ is an element of the equivalence class $\widetilde{Z}$, then $Z$ is said to be a representative of $\widetilde{Z}$.
The set of germs of piecewise smooth vector fields of class $\Cr$ at $\Lambda$ will be denoted by $\Or(\Lambda)$, or simply $\Or$. Also, $\Or=\Xr\times\Xr$ is endowed with the product topology.

If $Z=(X,Y)\in\Or$ then a \textbf{piecewise smooth vector field} is defined in some neighborhood $V$ of $\Lambda$ in $M$ as
\begin{equation}\label{filippov}
Z(p)=F(p)+\sgn(f(p)) G(p),
\end{equation}
where $F(p)=\frac{X(p)+Y(p)}{2}$ and $G(p)=\frac{X(p)-Y(p)}{2}$.

The \textbf{Lie derivative} $Xf(p)$ of $h$ in the direction of the vector field $X\in \Xr$ at $p\in\Sigma$ is defined as $Xf(p)=\langle X(p), \nabla f(p)\rangle$. Accordingly, the \textbf{tangency set} between $X$ and $\s$ is given by $S_{X}=\{p\in\Sigma;\ Xf(p)=0\}$.

\begin{remark}
	Notice that the Lie derivative is well-defined for a germ $\widetilde{X}\in\Xr$ since all the elements in this class coincide in $\s$.
\end{remark}

For $X_{1},\cdots, X_{k}\in \Xr$, the higher order Lie derivatives of $h$ are defined recurrently as
$$X_{k}\cdots X_{1}f(p)=X_{k}(X_{k-1}\cdots X_{1}f)(p),$$
i.e. $X_{k}\cdots X_{1}f(p)$ is the Lie derivative of the smooth function $X_{k-1}\cdots X_{1}f$ in the direction of the vector field $X_{k}$ at $p$. In particular, $X^{k}f(p)$ denotes $X_{k}\cdots X_{1}f(p)$, where $X_{i}=X$, for $i=1,\cdots,k$.

For a piecewise smooth vector field $Z=(X,Y)$ the switching manifold $\Sigma$ is generically the closure of the union of the following three distinct open regions.
\begin{itemize}
	\item Crossing Region: $\Sigma^{c}=\{p\in \Sigma;\ Xf(p)Yf(p)>0\}.$
	\item Stable Sliding Region: $\Sigma^{ss}=\{p\in \Sigma;\ Xf(p)<0,\ Yf(p)>0\}.$
	\item Unstable Sliding Region: $\Sigma^{us}=\{p\in \Sigma;\ Xf(p)>0,\ Yf(p)<0\}.$			
\end{itemize}

The tangency set of $Z$ will be referred as $S_{Z}=S_{X}\cup S_{Y}$. Notice that $\s$ is the disjoint union $\s^{c}\cup \s^{ss}\cup \s^{us}\cup S_{Z}$. Herein, $\s^{s}=\s^{ss}\cup \s^{us}$ is called \textbf{sliding region} of $Z$. See Figure \ref{dicfig}.

\begin{figure}[h!]
	\centering
	\bigskip
	\begin{overpic}[width=15cm]{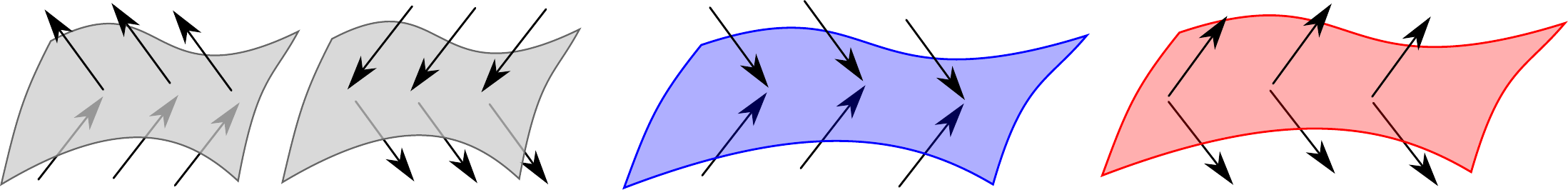}
		%					\begin{overpic}[grid,tics=5,width=11cm]{Figures/discregions.pdf}			
		\put(16,-2){{\footnotesize $(a)$}}		
		\put(52,-2){{\footnotesize $(b)$}}		
		\put(83,-2){{\footnotesize $(c)$}}	
		\put(-3,5){{\footnotesize $\s$}}	
		\put(5,13){{\footnotesize $X$}}	
		\put(4,-1){{\footnotesize $Y$}}							
		
	\end{overpic}
	\bigskip
	\caption{Regions in $\s$: $\s^{c}$ in $(a)$, $\s^{ss}$ in $(b)$ and $\s^{us}$ in $(c)$.   }	\label{dicfig}
\end{figure}

The concept of solution of $Z$ follows the Filippov's convention (see, for instance, \cite{F,GTS,T8}). The local solution of $Z=(X,Y)\in \Or$ at $p\in \Sigma^{s}$ is given by the \textbf{sliding vector field}
\begin{equation}
F_{Z}(p)=\frac{1}{Yf(p)-Xf(p)}\left(Yf(p)X(p)-Xf(p)Y(p)\right).
\end{equation}
Notice that $F_{Z}$ is a $\Cr$ vector field tangent to $\s^{s}$. %The critical points of $F_{Z}$ in $\s^{s}$ are called \textbf{pseudo-equilibria} of $Z$.

%\begin{definition}\label{NSVF}
%	We defined the \textbf{normalized sliding vector field} $F_{Z}^{N}$ of $Z$ by
%	\begin{equation}
%	F_{Z}^{N}(p)=Y f(p)X (p)-X f(p)Y(p),
%	\end{equation}	
%	for every $p\in\s^{s}$. 
%\end{definition}
%
%Notice that $F_Z^N$ is also a $\Cr$ vector field  tangent to $\s^{s}$.
%
%\begin{remark}
%	The normalized sliding vector field can be $\Cr$ extended beyond the boundary of $\s^s$. In addition, if $R$ is a connected component of $\s^{ss}$, then $F_{Z}^{N}$ is a re-parameterization of $F_{Z}$ in $R$, and so the phase portraits of both coincide. If $R$ is a connected component of $\s^{us}$, then $F_{Z}^{N}$ is a (negative) re-parameterization of $F_{Z}$ in $R$, then they have the same phase portrait, but the orbits are oriented in opposite direction.
%\end{remark}
%\begin{remark}
%	If $R$ is a connected component of $\s^{ss}$, then $F_{Z}^{N}$ is a re-parameterization of $F_{Z}$ in $R$, and they have exactly the same phase portrait. If $R$ is a connected component of $\s^{us}$, then $F_{Z}^{N}$ is a (negative) re-parameterization of $F_{Z}$ in $R$, then they have the same phase portrait, but the orbits are oriented in opposite direction.
%\end{remark}

If $p\in\s^c$, then the orbit of $Z=(X,Y)\in\Or$ at $p$ is defined as the concatenation of the orbits of $X$ and $Y$ at $p$. Nevertheless, if $p\in\s\setminus\s^c$, then it may occur a lack of uniqueness of solutions. In this case,  the flow of $Z$ is multivalued and any possible trajectory passing through $p$ originated by the orbits of $X$, $Y$ and $F_Z$ is considered as a solution of $Z$. More details can be found in \cite{F,GTS}.

In the following definition, we introduce the so-called $\s$-singularities of a Filippov system.

\begin{definition}\label{sigmasing}
	Let $Z=(X,Y)\in\Or$, a point $p\in \s$ is said to be:
	\begin{enumerate}[i)]
		\item a \textbf{tangential singularity} of $Z$ provided that $Xf(p)Yf(p)=0$ and $X(p), Y(p)\neq 0$;
		\item a \textbf{$\s$-singularity} of $Z$ provided that $p$ is either a tangential singularity, an equilibrium of $X$ or $Y$, or a pseudo-equilibrium of $Z$. 
	\end{enumerate}
\end{definition}

\begin{remark}
	A point $p\in\s$ which is not a $\s$-singularity of $Z$ is also referred as a \textbf{regular-regular} point of $Z$.
\end{remark}

We say that $\gamma$ is a \textbf{regular orbit} of $Z=(X,Y)$ if it is a piecewise smooth curve such that $\gamma\cap M^{+}$ and $\gamma\cap M^{-}$ are unions of regular orbits of $X$ and $Y$, respectively, and  $\gamma\cap\s\subset\s^{c}$.

\begin{definition}\label{elementarydef}
	Let $Z=(X,Y)\in\Or$. A tangential singularity $p\in \s$ is said to be a \textbf{fold-regular singularity} if either  one of the following conditions hold.
	\begin{enumerate}[i)]
		\item $Xf(p)=0$, $X^2f(p)\neq 0$ and $Yf(p)\neq 0$. In this case, if $X^2f(p)>0$, then we say that $p$ is \textbf{visible}, otherwise it is said to be \textbf{invisible}.
	\item $Yf(p)=0$, $Y^2f(p)\neq 0$ and $Xf(p)\neq 0$.	In this case, if $Y^2f(p)<0$, then we say that $p$ is \textbf{visible}, otherwise it is said to be \textbf{invisible}.
	\end{enumerate}
\end{definition}

\begin{figure}[h!]
	\centering
	\bigskip
	\begin{overpic}[width=10cm]{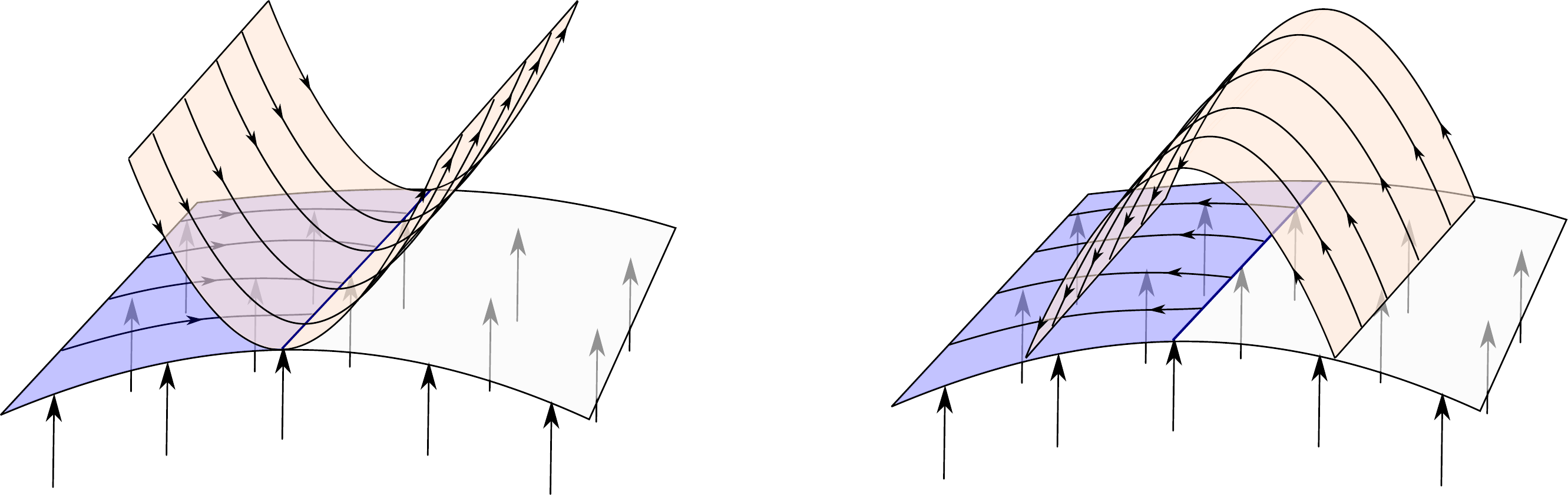}
		%			\begin{overpic}[grid,tics=5,width=10cm]{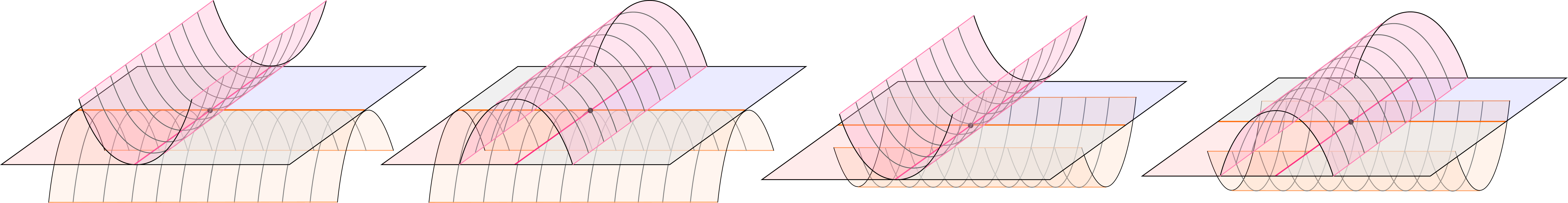}			
		\put(15,-3){{\footnotesize (a)}}						
		\put(75,-3){{\footnotesize (b)}}
		\put(-2,7){{\footnotesize $\s$}}		
		\put(0,17){{\footnotesize $X$}}	
		\put(0,0){{\footnotesize $Y$}}																					
	\end{overpic}
	\bigskip
	\caption{A fold-regular singularity $p$ with $Xf(p)=0$ and $(a)$ $X^2f(p)>0$ or  $(b)$ $X^2f(p)<0$. }	\label{types_fig}
\end{figure} 

\begin{remark}
	If condition $(i)$ (resp. $(ii)$) is satisfied in Definition \ref{elementarydef}, then we say that $X$ (resp. $Y$) has a fold point at $p$.
\end{remark}
The following proposition is proved in \cite{V}.

\begin{prop}[Vishik's Normal Form] \label{Vishik}
	Let $X\in\Xr$. If $p\in\s$ satisfies $Xf(p)=0$ and $X^2f(p)\neq 0$, then there exist a neighborhood $U_0$ of $p$ in $M$ and a system of coordinates $(x,y,z)$ at $p$ defined in $U_0$ ($x(p)=y(p)=z(p)=0$) such that $X$ is given by
	$$X(x,y,z)=(0,1,y), $$
and $\s$ is given by the equation $z=0$ in $U_0$.
\end{prop}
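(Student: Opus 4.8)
The plan is to construct the chart in a few stages, each preserving what the previous one secured. The guiding principle is that one cannot simultaneously rectify $X$ to a constant field and keep $\s$ flat: the curvature of the orbits of the target field $(0,1,y)$ is exactly the record of the quadratic contact $X^2f(p)\neq0$. So I keep $\s=\{z=0\}$ throughout and spend the remaining coordinate freedom on normalizing $X$. First I straighten $\s$: since $0$ is a regular value of $f$, there are $\Cr$ coordinates centred at $p$ in which $\s=\{z=0\}$ (take $z=f$ and complete to a chart), and I translate so that $p$ is the origin. Writing $X=(X_1,X_2,X_3)$, the hypotheses become $X_3(0)=Xf(p)=0$, $X(0)\neq0$, and $X^2f(p)=\langle\nabla X_3(0),X(0)\rangle\neq0$; in particular $X(0)$ is a nonzero vector tangent to $\s$. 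The tangency set $S_X\cap\s=\{z=0,\ X_3=0\}$ is a regular curve (the fold curve $\gamma$) through $0$, because $X^2f(p)\neq0$ forces $d(X_3)|_{\{z=0\}}$ to be nonzero at $0$; the same inequality gives $X(0)\notin T_0\gamma$, so $X(0)$ is transverse to $\gamma$ inside $\s$.

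Next, by a change of coordinates on $\s=\{z=0\}$ extended as the identity in $z$, I arrange $\gamma=\{y=0,\ z=0\}$ and $X(0)=(0,1,0)$; both are possible since $X(0)$ is tangent to $\s$ and transverse to $\gamma$. I then rectify the flow of $X$ along $\s$ by passing to coordinates of the form $(x,y,z)\mapsto(\xi,\eta,z)$ that keep the third coordinate (hence $\s$) fixed, where $\xi$ is a first integral of $X$ with $d\xi(0)=dx$ and $\eta$ is the flow-time of $X$ measured from the section $\{y=0\}$, so that $X\eta\equiv1$ and $\eta(0)=0$. Since $X(0)=\partial_y\neq0$, the flow-box theorem guarantees that $\xi,\eta$ exist and that $(\xi,\eta,z)$ is a genuine chart at $0$. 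In these coordinates $X=(0,1,g)$ with $g:=Xf$ a $\Cr$ function, $\s=\{z=0\}$, the fold curve still $\{y=0,\ z=0\}$, $g(x,y,0)=0\Leftrightarrow y=0$, and $\partial_yg(0)=X^2f(p)\neq0$.

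It remains to turn $g$ into $y$ while preserving $\s$. I look for a new third coordinate of the form $Z=z\,u(x,y,z)$ with $u(0)\neq0$, so that $\{Z=0\}=\{z=0\}$, for which the field becomes $(0,1,y)$. As $x,y$ are untouched, this is the transport equation
\begin{equation*}
X(Z)=\partial_y Z+g\,\partial_z Z=y,
\end{equation*}
a linear first-order PDE whose characteristics are the orbits of $X$. Substituting $Z=z\,u$ and evaluating on $\{z=0\}$ reduces the leading constraint to $g(x,y,0)\,u(x,y,0)=y$, which is solvable with $u(0,0,0)=1/\partial_yg(0)\neq0$ precisely because $g(x,y,0)$ has a simple zero along $\gamma=\{y=0\}$. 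Solving the equation by integrating $\tfrac{d}{ds}Z=y$ along the orbits of $X$, with $x$ a parameter, and checking that the resulting $u$ is $\Cr$ and nonvanishing at the origin yields the chart $(x,y,Z)$ in which $X=(0,1,y)$ and $\s=\{z=0\}$ on a common neighbourhood $U_0$ of $p$.

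The main obstacle is this last stage: one must solve the transport equation on a full neighbourhood of $p$ while forcing $\{Z=0\}=\{z=0\}$, and the characteristics are the orbits of $X$, which are tangent to $\s$ exactly along the fold curve $\gamma$. Verifying that the solution $Z=z\,u$ remains $\Cr$ and that $\partial_zZ(0)=u(0)\neq0$ despite the degeneracy of the characteristics on $\gamma$ is where the quadratic-contact hypothesis $X^2f(p)\neq0$ is genuinely used, and it is the step demanding the most care.
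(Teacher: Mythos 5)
Your stages 1--3 are sound: straightening $\s$, adapting coordinates in $\s$ so that the fold curve is $\{y=z=0\}$ with $X(0)=(0,1,0)$, and the partial rectification giving $X=(0,1,g)$ with $g(x,y,0)=y\,h(x,y)$, $h(0)\neq 0$, all follow from the implicit function theorem as you indicate. The gap is in the last stage, and it is not merely a verification needing ``care'': the construction as specified is in general impossible. Pass to the flow-box chart $(x,y,w)$, where $w$ is the first integral recording where an orbit meets the section $\{y=0\}$; since $Xy=1$, every solution of the transport equation $XZ=y$ has the form $Z=\tfrac{y^{2}}{2}+C(x,w)$, because along each orbit $dZ/dy=y$. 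Hence $\{Z=0\}=\{y^{2}=-2C(x,w)\}$, so if $\{Z=0\}=\s$ then every crossing orbit must meet $\s$ at two times that are exactly opposite, $y_{+}=-y_{-}$. Nothing in stages 2--3 guarantees this symmetry: the section $\{y=0\}$ was only required to contain the fold curve and be transverse to $X$. Concretely, start from the model $X=(0,1,y_{m})$, $\s=\{z=0\}$, and set $y=y_{m}-\lambda z$ with $\lambda\neq 0$; all of your stage-2/3 normalizations hold in the coordinates $(x,y,z)$, yet the orbit with first integral $w=z-y_{m}^{2}/2<0$ meets $\s$ at times $\eta_{\pm}=\pm\sqrt{-2w}-\lambda w+\er(w^{2})$ measured from $\{y=0\}$, whose sum $-2\lambda w$ is nonzero. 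For such coordinates no $Z=zu$ with $XZ=y$ and $\{Z=0\}=\{z=0\}$ exists. The source of the trouble is that $\s$ is not an admissible initial surface for your characteristics: it is characteristic along the fold curve and is met twice by every crossing orbit, so prescribing $Z=0$ there is an over-determined problem whose compatibility condition is precisely the symmetry above; restricting the PDE to $\{z=0\}$, as you do, only determines $u|_{\s}$ and never sees this obstruction.

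What is missing is the actual analytic content of Vishik's theorem. Before solving for $Z$ you must replace $y$ by $y+b(x,w)$, with $b$ a smooth function of the first integrals (the only modification of the time coordinate compatible with $XY=1$), chosen so that the two intersections of every orbit with $\s$ become symmetric. Equivalently, you must show that in the flow-box chart the function $z$ cutting out $\s$ factors as $U\cdot\big((y+b(x,w))^{2}-D(x,w)\big)$ with $U\neq 0$ and $b,D$ smooth: a Weierstrass-type division, obtained from the Malgrange preparation theorem in this quadratic case, or equivalently by linearizing the fold involution and invoking Whitney's even-function theorem. The whole difficulty is the smoothness of the ``mid-time'' $b$ across the fold curve, where the two crossings collide and then disappear; this is where $X^{2}f(p)\neq 0$ is really spent. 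Once this division is available the proof closes immediately: take $Y_{f}=y+b$, $Z_{f}=\tfrac12\big(Y_{f}^{2}-D\big)$ and any first integral completing a chart; then $XY_{f}=1$, $XZ_{f}=Y_{f}$, and $\{Z_{f}=0\}=\s$, which is the normal form. Note, finally, that the paper offers no proof to compare against---it quotes the result from \cite{V}---and the division/preparation step just described is the heart of that reference, which your proposal bypasses rather than supplies.
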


\section{A discussion on some global connections}\label{globalconsec}

Let $Z_0=(X_0,Y_0)\in\Or$ be a Filippov system having a visible fold-regular singularity at $p_0\in\s$ (see Definition \ref{elementarydef}). Denote the flows of $X_0$ and $Y_0$ by $\p_{X_0}(t;p)$ and $\p_{Y_0}(t;p)$, respectively. Assume that $Z_0$ satisfies the following set of global hypotheses \textbf{(G)}:

\begin{enumerate}
	\item[($G_1$)] There exists $T_+>0$ such that $p_0^+=\p_{X_0}(T_+; p_0)\in \s$.
	\item[($G_2$)] $\Gamma_0^+=\{\p_{X_0}(t; p_0);\ t\in(0,T_+)\}\subset M^+$ and $X_0$ is transverse  to $\s$ at $p_0^+$.
	\item[($G_3$)] There exist a point $q_0\in \s$ and a regular orbit $\Gamma_0^R$ of $Z_0$ connecting $p_0^+$ and $q_0$.
\end{enumerate} 

Without loss of generality, assume that $\Gamma_0^R$ in condition $(G_3)$ is a regular orbit of $Y_0$ contained in $M^-$. Using properties of a fold-regular singularity (see \cite{V}) and the transversality condition $(G_2)$, we define the germs $\mathcal{P}_0^+:(\s,p_0)\rightarrow (\s, p_0^+)$ and $\mathcal{P}_0^-:(\s,p_0^+)\rightarrow (\s, q_0)$ induced by the flows of $X_0$ and $Y_0$, respectively (see Figure \ref{first_fig}). Thus, consider
\begin{equation}
\label{first}
\mathcal{P}_0=\mathcal{P}_0^-\circ \mathcal{P}_0^+,
\end{equation}
and notice that the restriction of $\mathcal{P}_0$ to $\overline{\s^c}$ is a first return map of $Z_0$ in $\s$.

\begin{figure}[h!]
	\centering
	\bigskip
	\begin{overpic}[width=10cm]{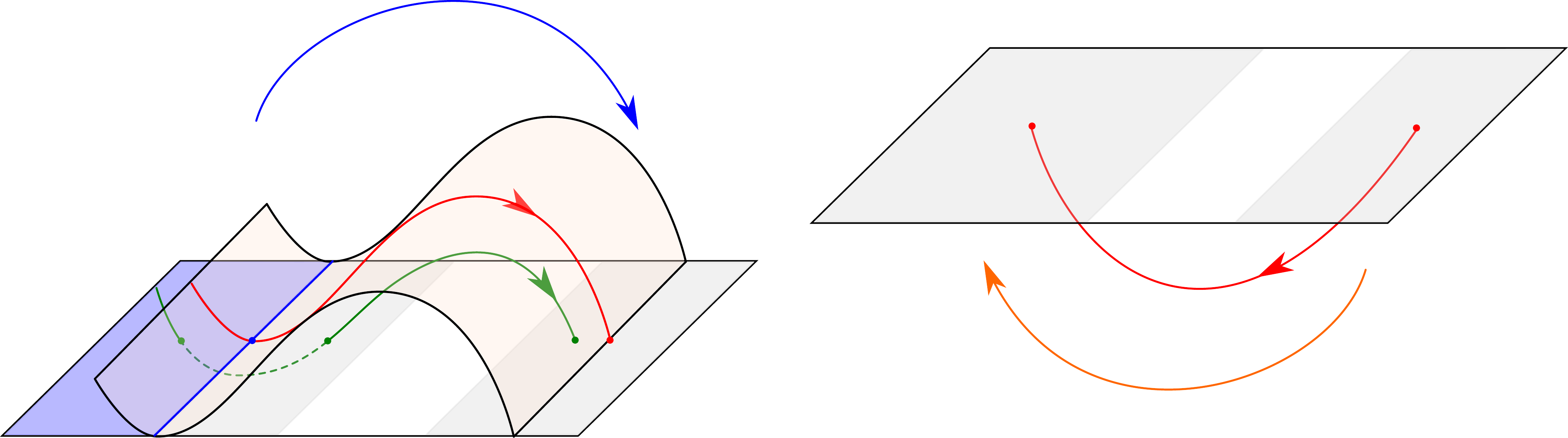}
		%								\begin{overpic}[grid,tics=5,width=10cm]{Figures/first.pdf}			
		\put(27,23){{\footnotesize $\mathcal{P}_0^+$}}		
		\put(5,15){{\footnotesize $X_0$}}		
		\put(15,8){{\scriptsize $p_0$}}	
		\put(10,4){{\scriptsize $p_1$}}	
		\put(21,4){{\scriptsize $p_2$}}									
		\put(39,5){{\scriptsize $p_0^+$}}	
		\put(90,21){{\scriptsize $p_0^+$}}	
		\put(65,21){{\scriptsize $q_0$}}													
		\put(0,5){{\footnotesize $\s$}}	
		\put(98,20){{\footnotesize $\s$}}	
		\put(75,5){{\footnotesize $\mathcal{P}_0^-$}}	
		\put(95,15){{\footnotesize $Y_0$}}						  	  		
		%		%\put(57,10){{\footnotesize $\s$}}	
		%		\put(26,22){{\footnotesize $\Gamma_0$}}			
		%		\put(10,20){{\footnotesize $X_0$}}	
		%		\put(10,2){{\footnotesize $Y_0$}}							
		%		
	\end{overpic}
	\bigskip
	\caption{Action of the maps $\mathcal{P}_0^+$ and $\mathcal{P}_0^-$.}	\label{first_fig}
\end{figure} 

\begin{remark}
	Notice that, in Figure \ref{first_fig}, the points $p_1\neq p_2$ have the same image through $\mathcal{P}_0^+$. We will see that $\mathcal{P}_0$ is a non-invertible $\Cr$ map and its restriction to $\overline{\s^c}$ is a $\Cr$ homeomorphism.
\end{remark}

Since $p_0$ is a visible fold-regular singularity, it follows that $Z_0$ has a (compact) $\Cr$ curve $\gamma_0\subset\s$ of visible fold-regular singularities containing $p_0$ (see \cite{V}). It follows that, $\gamma_0$ is brought to a (compact) $\Cr$ curve $\zeta_0\subset \s$ by $\mathcal{P}_0$ such that $q_0\in\zeta_0$. 

Also, still from Local Theory, one deduces directly that the sliding vector field $F_{Z_0}$ of $Z_0$ is transverse to the curve $\gamma_0$ anywhere, and there exists a neighborhood $V_0$ of $\gamma_0$ in $\s$ (with compact closure) such that:
\begin{enumerate}[i)]
	\item $Y_0$ is transverse to $\s$ at any point of $V_0$;
	\item $\gamma_0$ divides $\overline{V_0}$ into two connected components, one contained in $\s^s$ and the other one contained in $\s^c$;
	\item $F_{Z_0}$ is $\Cr$-extended onto $\overline{V_0}$ (see Lemma 24 in \cite{GT19}).%itself to $\overline{V_0}$ (see Lemma 24 in \cite{GT17}).
\end{enumerate}
See Figure \ref{v0fig}.

\begin{figure}[h!]
	\centering
	\bigskip
	\begin{overpic}[width=4cm]{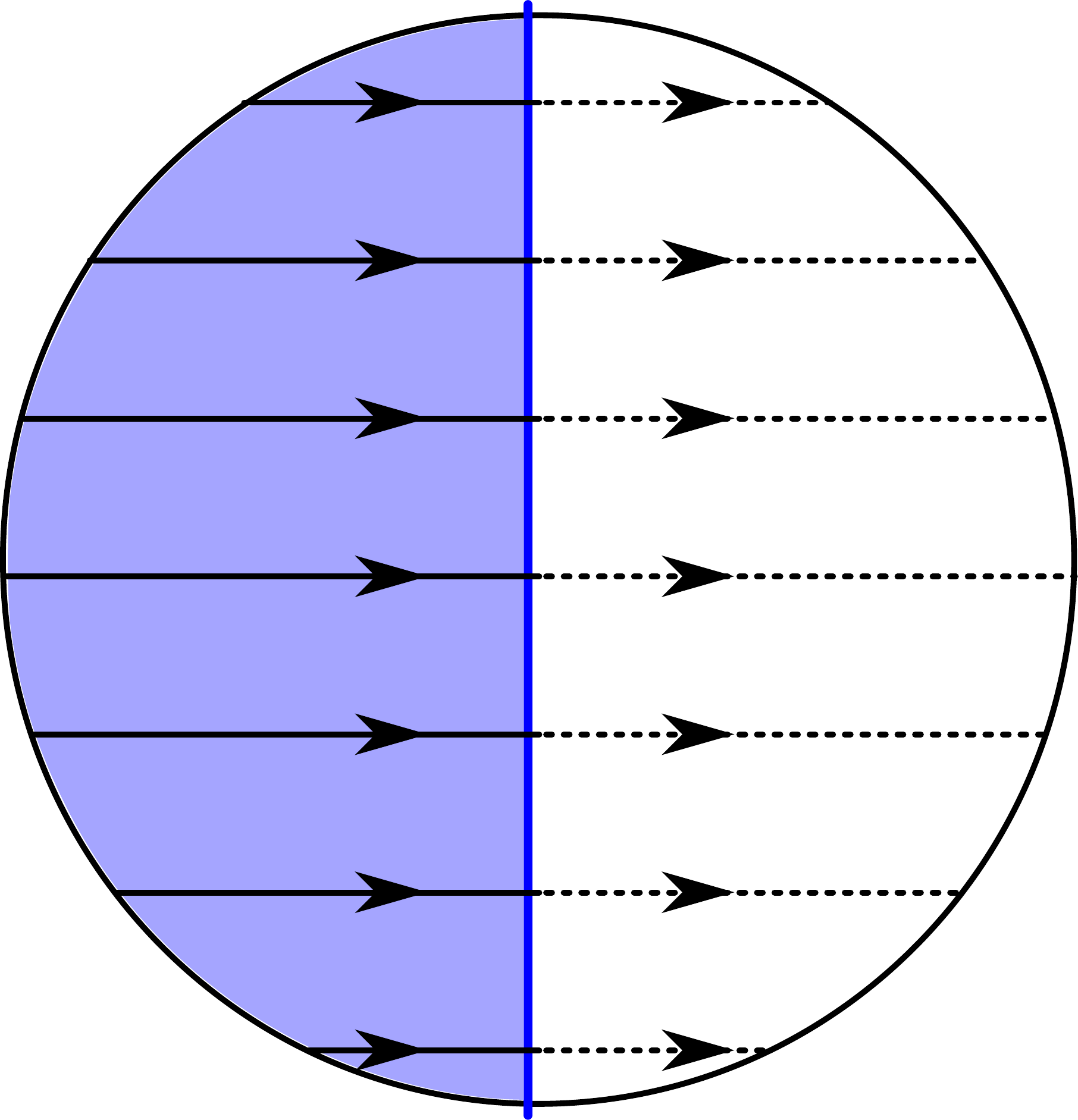}
		%								\begin{overpic}[grid,tics=5,width=13cm]{Figures/flavours2.pdf}			
		\put(50,70){{\footnotesize $\gamma_0$}}
		\put(100,45){{\footnotesize $V_0$}}								
		%		\put(75,-3){{\footnotesize $(b)$}}		
		%		\put(16.2,10.5){{\scriptsize $p_0$}}	
		%		\put(38,10.5){{\scriptsize $q_0$}}			
		%		\put(0,10){{\footnotesize $\s$}}	
		%		%\put(57,10){{\footnotesize $\s$}}	
		%		\put(26,22){{\footnotesize $\Gamma_0$}}			
		%		\put(10,20){{\footnotesize $X_0$}}	
		%		\put(10,2){{\footnotesize $Y_0$}}							
		%		
	\end{overpic}
	\bigskip
	\caption{Neighborhood $V_0\subset\s$.}	\label{v0fig}
\end{figure} 

For our purposes, we assume that $\zeta_0\subset V_0$. Accordingly, we consider $\mathcal{P}_0: V_0\rightarrow V_0$. In this case, we distinguish the following situations: (a) $\zeta_0\subset \s^s$, (b) $\zeta_0\subset \s^c$, (c) $\zeta_0$ is transverse to $\gamma_0$ at $q_0$, and (d) $\zeta_0$ is tangent to $\gamma_0$ at $q_0$ (see Figure \ref{flavours_fig}).

\begin{figure}[h!]
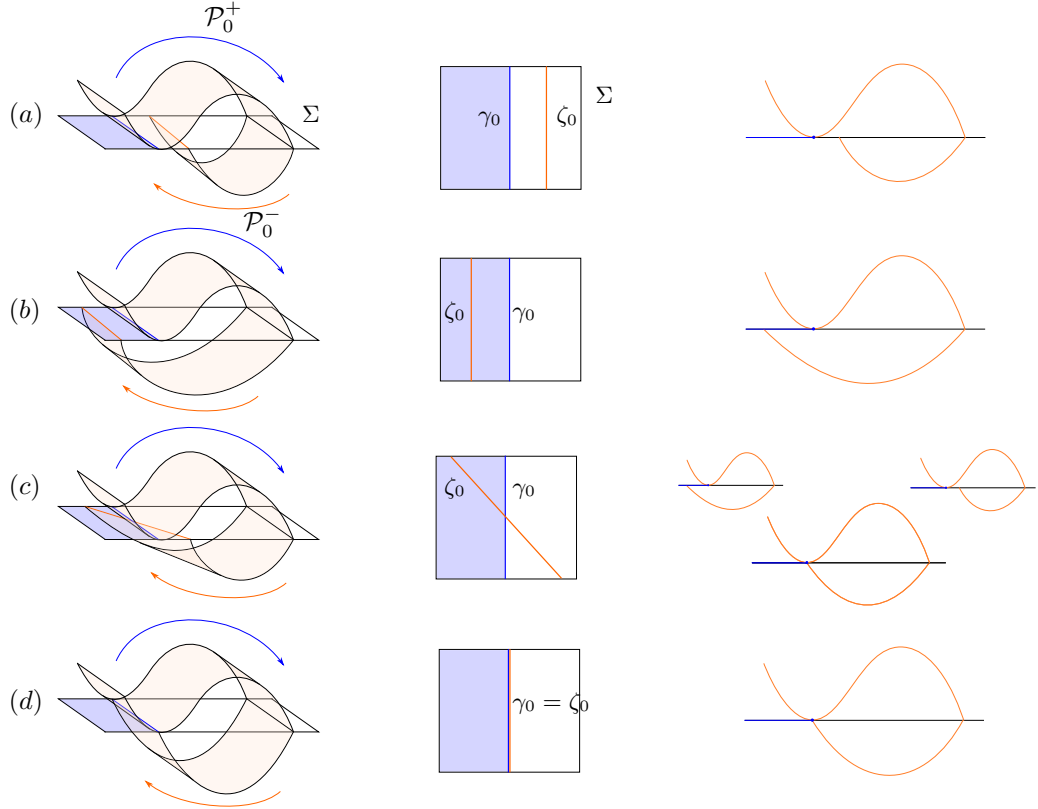

	\centering
	\bigskip
	\begin{overpic}[width=13cm]{Figures/flavours2.pdf}
		%								\begin{overpic}[grid,tics=5,width=13cm]{Figures/flavours2.pdf}			
		\put(15,80){{\footnotesize $\mathcal{P}_0^+$}}
		\put(19,59){{\footnotesize $\mathcal{P}_0^-$}}
		\put(43,70){{\scriptsize $\gamma_0$}}
		\put(51,70){{\scriptsize $\zeta_0$}}	
		\put(46.5,50){{\scriptsize $\gamma_0$}}
		\put(39.5,50){{\scriptsize $\zeta_0$}}	
		\put(46.5,32){{\scriptsize $\gamma_0$}}
		\put(25,70){{\scriptsize $\s$}}
		\put(55,72){{\scriptsize $\s$}}
		\put(39.5,32){{\scriptsize $\zeta_0$}}	
		\put(46.5,10){{\scriptsize $\gamma_0=\zeta_0$}}		
		\put(-5,10){{\footnotesize $(d)$}}	
		\put(-5,32){{\footnotesize $(c)$}}	
		\put(-5,50){{\footnotesize $(b)$}}	
		\put(-5,70){{\footnotesize $(a)$}}								
		%		\put(75,-3){{\footnotesize $(b)$}}		
		%		\put(16.2,10.5){{\scriptsize $p_0$}}	
		%		\put(38,10.5){{\scriptsize $q_0$}}			
		%		\put(0,10){{\footnotesize $\s$}}	
		%		%\put(57,10){{\footnotesize $\s$}}	
		%		\put(26,22){{\footnotesize $\Gamma_0$}}			
		%		\put(10,20){{\footnotesize $X_0$}}	
		%		\put(10,2){{\footnotesize $Y_0$}}							
		%		
	\end{overpic}
	\bigskip
	\caption{Relative position between the curve $\gamma_0$ of fold-regular singularities and its image $\zeta_0$ through the flow of $Z_0$: (a) $\zeta_0\subset \s^s$, (b) $\zeta_0\subset \s^c$, (c)$\gamma_0\pitchfork \zeta_0$ and (d) $\zeta_0=\gamma_0$.}	\label{flavours_fig}
\end{figure} 

Notice that configurations (a), (b) and (c) are robust in $\Or$. However, configuration (d) is easily broken by small perturbations. In fact, the degree of degeneracy in case (d) depends on the degree of the contact between $\gamma_0$ and $\zeta_0$ at $q_0$. The most degenerate situation occurs when $\gamma_0=\zeta_0$ (as illustrated in Figure \ref{flavours_fig}). 

Now, we discuss the possible dynamics concerning the robust situations (a), (b) and (c).

\subsection{Cases (a) $\zeta_0\subset \s^s$ and (b) $\zeta_0\subset \s^c$}\label{sliding_sec}
\

If $\zeta_0\subset \s^c$, then the dynamics of $Z_0$ is trivial around the orbit connecting $p_0$ and $q_0$. In fact, consider
\begin{enumerate}[i)]
	\item a section $\Pi^+$ at $p_0$ such that $\Pi^+$ is the restriction to $M^+$ of a local transversal section of $X_0$ at $p_0$ which intersects $\s$ at $\gamma_0$;
	\item a section $\Pi^-$ which consists on a neighborhood of $p_0$ intersected with $\overline{\s^c}$;
	\item $\Pi=\Pi^+\cup \Pi^-$.
\end{enumerate}

Thus, using the local structure of a fold-regular singularity (see \cite{V}), we obtain that all orbits of $Z_0$ in a neighborhood of $p_0$ intersect $\Pi$. Also, for a neighborhood $N_0$ of $q_0$ contained in $\s^c$, we construct a tubular flow box between $\Pi$ and $N_0$ along the orbits of $X_0$ and $Y_0$ (see Figure \ref{tubular_fig}).

\begin{figure}[h!]
	\centering
	\bigskip
	\begin{overpic}[width=9cm]{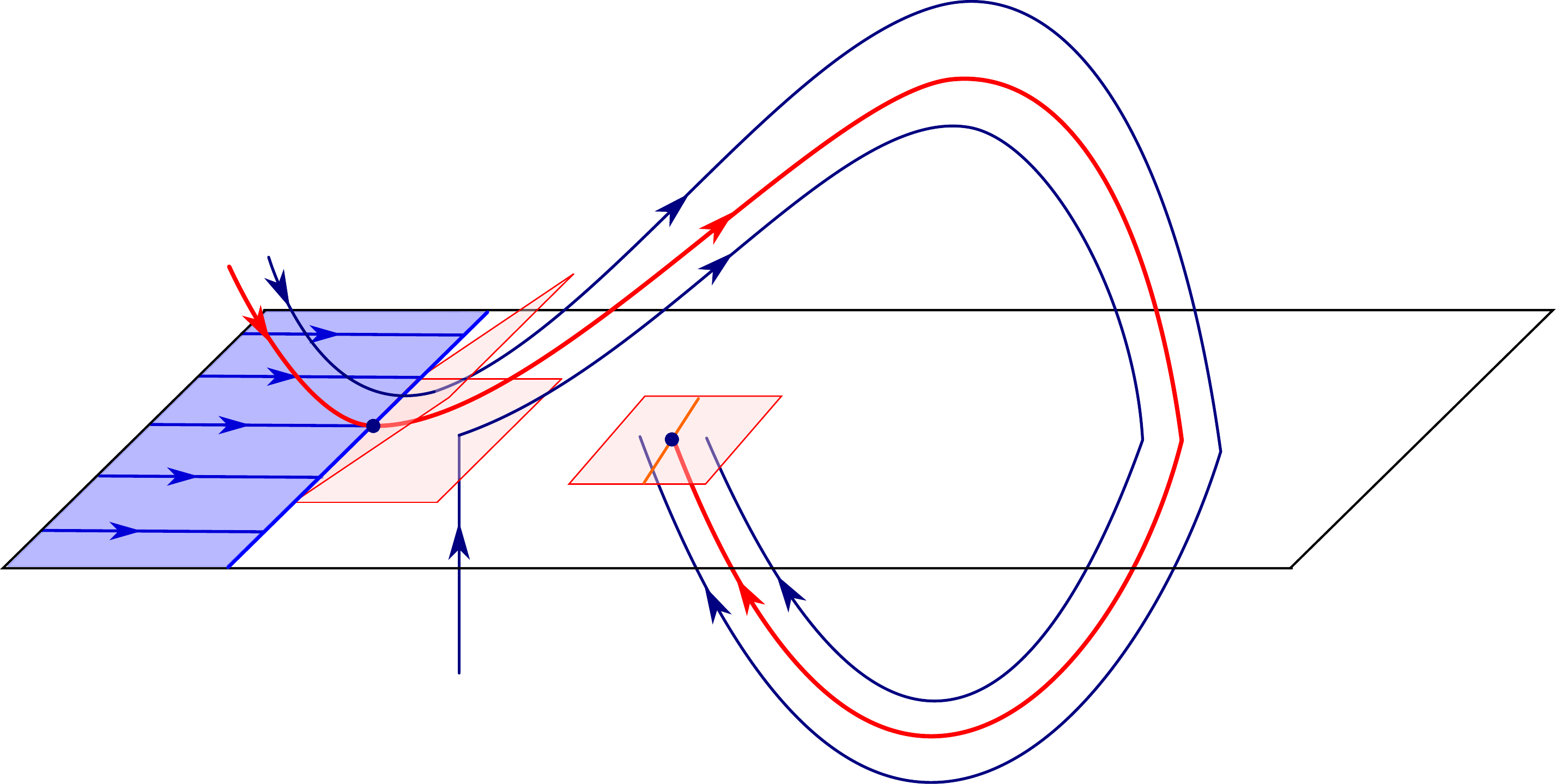}
		%								\begin{overpic}[grid,tics=5,width=9cm]{Figures/tubular.pdf}			
		\put(32,32){{\footnotesize $\Pi$}}	
		\put(50,20){{\footnotesize $N_0$}}			
		%		\put(75,-3){{\footnotesize $(b)$}}		
		%		\put(16.2,10.5){{\scriptsize $p_0$}}	
		%		\put(38,10.5){{\scriptsize $q_0$}}			
		%		\put(0,10){{\footnotesize $\s$}}	
		%		%\put(57,10){{\footnotesize $\s$}}	
		%		\put(26,22){{\footnotesize $\Gamma_0$}}			
		%		\put(10,20){{\footnotesize $X_0$}}	
		%		\put(10,2){{\footnotesize $Y_0$}}							
		%		
	\end{overpic}
	\bigskip
	\caption{Tubular flow of $Z_0$ between $\Pi$ and $N_0$.}	\label{tubular_fig}
\end{figure}

Now, consider that $\zeta_0\subset\s^s$. As we have seen, each point $p\in\gamma_0$ is brought to a point $\mathcal{P}_0(p)\in\zeta_0$ through the flow of $Z_0$ (orbits of $X_0$ and $Y_0$). Since $F_{Z_0}$ is regular in $V_0$ and transverse to $\gamma_0$, each point $q\in\zeta_0$ reaches $\gamma_0$ at a unique point $\psi_0^*(q)$ through a sliding trajectory of $Z_0$. It defines the $\Cr$ map 
\begin{equation}\label{slmap}
\psi_0: p\in\gamma_0\longmapsto \psi_0^*(\mathcal{P}_0(p))\in\gamma_0,
\end{equation}
which induces a dynamics in the fold curve $\gamma_0$. We refer to $\psi_0$ as the \textbf{fold line map} associated to $Z_0$.

In this case, the orbits of $X_0$, $Y_0$ and $F_{Z_0}$ connect $\gamma_0$ to itself and they give rise to a $Z_0$-invariant manifold $\mathcal{M}$ which is a piecewise-smooth $2D$-cylinder or a piecewise-smooth Möbius strip, depending on the identification provided by $\psi_0$. Also, the dynamics of $Z_0$ in $\mathcal{M}$ is completely characterized by the dynamics of $\psi_0$ in $\gamma_0$. Thus, 

\begin{enumerate}[i)]
	\item[($Sl_{R}$)] if $p_0\in\gamma_0$ is a regular point of $\psi_0$, then the dynamics of $Z_0$ in $\mathcal{M}$ is trivial. It means that there are no minimal sets contained in $\mathcal{M}$;
	\item[($Sl_{S}$)] if $p_0\in\gamma_0$ is a fixed point of $\psi_0$, then $Z_0$ has a sliding connection $\Gamma_0$ through $p_0$ contained in $\mathcal{M}$ (see Figure \ref{slidingcycle_fig}).
\end{enumerate}

\begin{figure}[h!]
	\centering
	\bigskip
	\begin{overpic}[width=13cm]{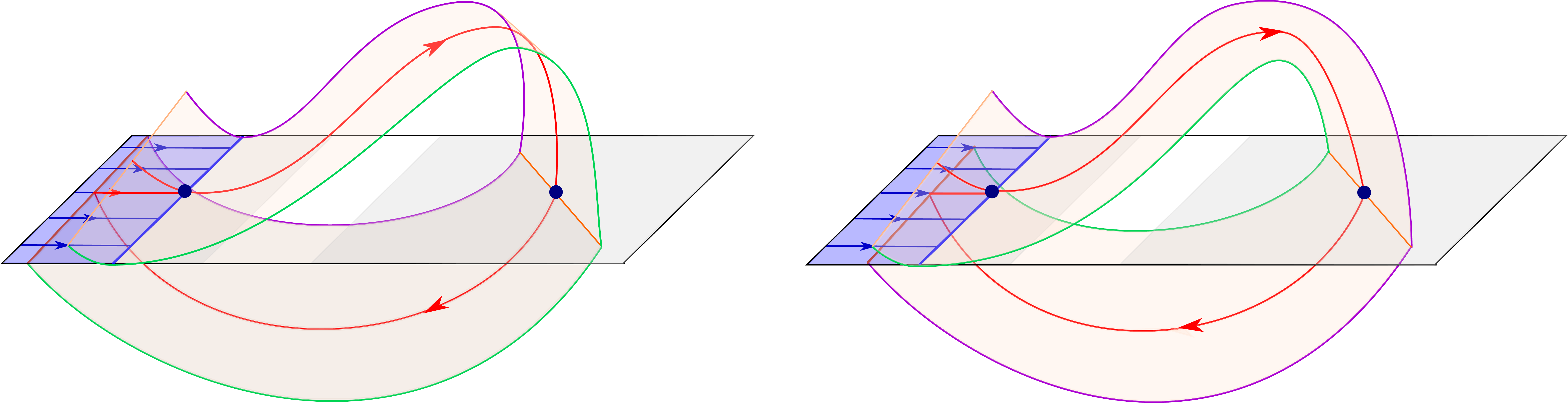}
		%							\begin{overpic}[grid,tics=5,width=13cm]{Figures/cycles.pdf}	
		\put(6,6){{\footnotesize $A$}}		
		\put(15,18){{\footnotesize $B$}}
		\put(1,6){{\footnotesize $A'$}}		
		\put(8.7,18){{\footnotesize $B'$}}			
		\put(58,6){{\footnotesize $A$}}		
		\put(67,17){{\footnotesize $B$}}
		\put(52.5,6){{\footnotesize $B'$}}		
		\put(58,16){{\footnotesize $A'$}}			
		\put(25,-3){{\footnotesize $(a)$}}		
		\put(75,-3){{\footnotesize $(b)$}}	
		\put(30,27){{\footnotesize $\mathcal{M}$}}
		\put(35,23){{\footnotesize $\Gamma_0$}}
		\put(80,27){{\footnotesize $\mathcal{M}$}}	
		\put(83,23){{\footnotesize $\Gamma_0$}}					
		%		\put(16.2,10.5){{\scriptsize $p_0$}}	
		%		\put(38,10.5){{\scriptsize $q_0$}}			
		%		\put(0,10){{\footnotesize $\s$}}	
		%		%\put(57,10){{\footnotesize $\s$}}	
		%		\put(26,22){{\footnotesize $\Gamma_0$}}			
		%		\put(10,20){{\footnotesize $X_0$}}	
		%		\put(10,2){{\footnotesize $Y_0$}}							
		%		
	\end{overpic}
	\bigskip
	\caption{A sliding connection $\Gamma_0$ of $Z_0$ in $\mathcal{M}$ where $\mathcal{M}$ is a piecewise-smooth cylinder $(a)$ or a piecewise-smooth Möbius strip $(b)$. }	\label{slidingcycle_fig}
\end{figure} 

If ($Sl_{S}$) is satisfied, then the sliding connection $\Gamma_0$ of $Z_0$ can be persistent, depending on the properties of $\psi_0$ at $p_0$. In fact, we mention the following cases. 

\begin{enumerate}[i)]
	\item If $p_0$ is a hyperbolic fixed point of $\psi_0$, then each $Z\in\Or$ nearby $Z_0$ presents a sliding connection $\Gamma$ near $\Gamma_0$, in the Hausdorff distance, with the same stability of $\Gamma_0$.
	\item If $p_0$ is a fixed point of $\psi_0$ of saddle-node type, i.e. $|\psi_0'(p_0)|=1$ and $|\psi_0''(p_0)|\neq1$, then $Z_0$ belongs to a codimension one submanifold of $\Or$. A versal unfolding of $Z_0$ in $\Or$ around $\Gamma_0$ is illustrated in Figure \ref{unfoldingsliding_fig}.
\end{enumerate}

\begin{figure}[H]
	\centering
	\bigskip
	
	\begin{overpic}[width=13cm]{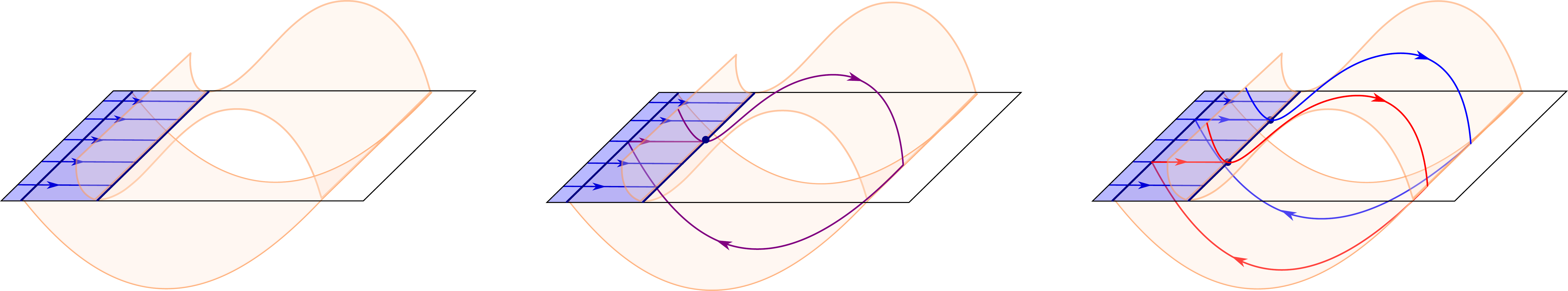}
		%							\begin{overpic}[grid,tics=5,width=13cm]{Figures/cycles.pdf}			
		\put(10,-2){{\footnotesize $\ag<0$}}		
		\put(45,-2){{\footnotesize $\ag=0$}}
		\put(57,12){{\footnotesize $\Gamma_0$}}		
		\put(80,-2){{\footnotesize $\ag>0$}}			
		%		\put(16.2,10.5){{\scriptsize $p_0$}}	
		%		\put(38,10.5){{\scriptsize $q_0$}}			
		%		\put(0,10){{\footnotesize $\s$}}	
		%		%\put(57,10){{\footnotesize $\s$}}	
		%		\put(26,22){{\footnotesize $\Gamma_0$}}			
		%		\put(10,20){{\footnotesize $X_0$}}	
		%		\put(10,2){{\footnotesize $Y_0$}}							
		%		
	\end{overpic}

	\bigskip
	
	\caption{ A versal unfolding $Z_{\ag}$ of $Z_0$ around $\Gamma_0$ when conditions $(Sl_S)$ and $(ii)$ are satisfied (saddle-node bifurcation). }	\label{unfoldingsliding_fig}
\end{figure} 

\begin{remark} Observe that, if $p_0$ is a fixed point of $\psi_0$ having a higher degree of degeneracy, then $Z\in\Or$ nearby $Z_0$ presents complicated sliding features contained in $\widetilde{\mathcal{M}}$ (which is a $Z$-invariant manifold nearby $\mathcal{M}$ having the same topological type of $\mathcal{M}$) bifurcating from $\Gamma_0$.  % the codimension of $Z_0$ will be increased.
\end{remark}

\subsection{Case (c): $\zeta_0$ and $\gamma_0$ are transverse at $q_0$}
\

Assume that $Z_0=(X_0,Y_0)\in\Or$ satisfies \textbf{(G)} and the following assumption
\begin{center}
	\textbf{(T)} $\zeta_0\cap \gamma_0=\{q_0\}$ and $\zeta_0\pitchfork \gamma_0$ at $q_0$.
\end{center}

If $q_0\neq p_0$, then for $Z\approx Z_0$ ($\approx$ stands for nearby), hypothesis (T) implies that there exist curves in $\s$, $\gamma$ and $\zeta$, analogous to $\gamma_0$ and $\zeta_0$, satisfying $\zeta\cap\gamma=\{q\}$ for some $q\approx q_0$ and $\zeta\pitchfork \gamma$ at $q$. Also, there exists $p\approx p_0$ which is mapped to $q$ through the flow of $Z$, and $q\neq p$. It follows that the connection between $p_0$ and $q_0$ of $Z_0$ is persistent for $Z\in\Or$ nearby $Z_0$ (see Figure \ref{persist_fig}).

\begin{figure}[h!]
	\centering
	\bigskip
	\begin{overpic}[width=7cm]{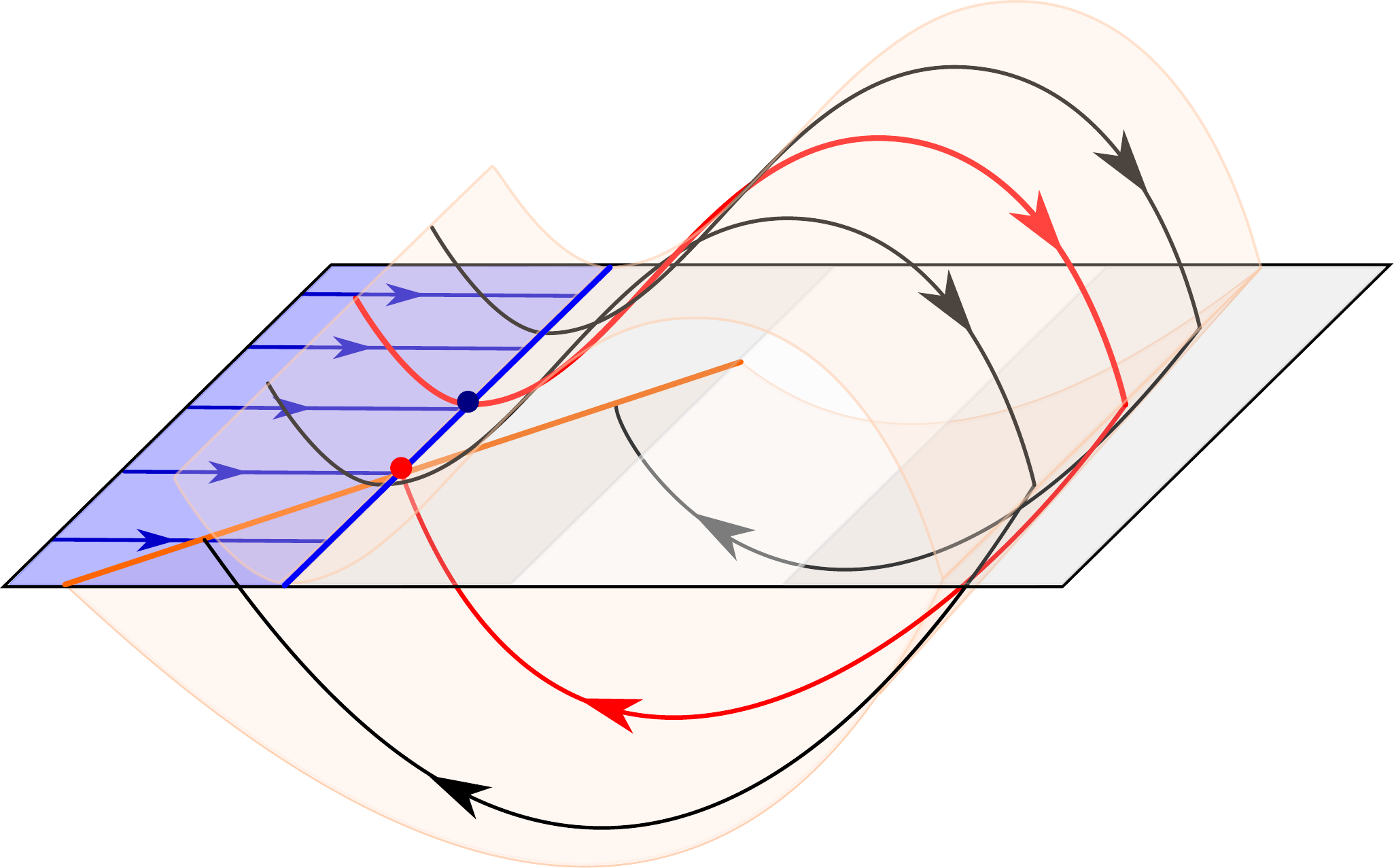}
		%									\begin{overpic}[grid,tics=5,width=7cm]{Figures/generic_con.pdf}			
		\put(70,52){{\footnotesize $\Gamma_0$}}		
		\put(30,35){{\scriptsize $p_0$}}		
		\put(25,30){{\scriptsize $q_0$}}	
		%		\put(38,10.5){{\scriptsize $q_0$}}			
		%		\put(0,10){{\footnotesize $\s$}}	
		%		%\put(57,10){{\footnotesize $\s$}}	
		%		\put(26,22){{\footnotesize $\Gamma_0$}}			
		%		\put(10,20){{\footnotesize $X_0$}}	
		%		\put(10,2){{\footnotesize $Y_0$}}							
		
	\end{overpic}
	\bigskip
	\caption{A robust connection $\Gamma_0$ of $Z_0$ between $p_0$ and $q_0$.}	\label{persist_fig}
\end{figure} 

Now, if hypothesis
\begin{center}
	\textbf{(H)} $p_0=q_0$,
\end{center}
is also satisfied, then $Z_0$ has a homoclinic-like loop $\Gamma_0$ at $p_0$ (see Figure \ref{generic_cycle_fig}). In contrast to the previous case, this phenomenon is not persistent in $\Or$. 

%\begin{figure}[h!]
%	\centering
%	\bigskip
%	\begin{overpic}[width=7cm]{Figures/generic_cycle2.pdf}
%		%							\begin{overpic}[grid,tics=5,width=13cm]{Figures/cycles.pdf}		
%		\put(70,52){{\footnotesize $\Gamma_0$}}		
%		\put(30,35){{\scriptsize $p_0$}}		
%		%		\put(25,-3){{\footnotesize $(a)$}}		
%		%		\put(75,-3){{\footnotesize $(b)$}}		
%		%		\put(16.2,10.5){{\scriptsize $p_0$}}	
%		%		\put(38,10.5){{\scriptsize $q_0$}}			
%		%		\put(0,10){{\footnotesize $\s$}}	
%		%		%\put(57,10){{\footnotesize $\s$}}	
%		%		\put(26,22){{\footnotesize $\Gamma_0$}}			
%		%		\put(10,20){{\footnotesize $X_0$}}	
%		%		\put(10,2){{\footnotesize $Y_0$}}							
%		
%	\end{overpic}
%	\bigskip
%	\caption{A homoclinic-like loop $\Gamma_0$ of $Z_0$ at a fold-regular singularity $p_0$.}	\label{generic_cycle_fig}
%\end{figure} 

\section{Quasi-generic loops and Main results}\label{main_sec}

Our aim is to describe the bifurcation diagram of a vector field $Z_0\in\Or$ satisfying hypotheses \textbf{(G)}, \textbf{(T)}, and \textbf{(H)} around its homoclinic-like loop $\Gamma_0$ at $p_0$ (see Figure \ref{generic_cycle_fig}), and characterize the dynamical features arising from such a connection. 

Generally speaking, we prove that, under some constraints, this loop is generic in one-parameter families in $\Or$. %Furthermore, we obtain a modulus of stability of $Z_0$ at $\Gamma_0$.
In what follows, we consider some classes of vector fields in $\Or$ and we state our main results concerning this topic.

Consider $Z_0\in \Or$ satisfying \textbf{(G)}, \textbf{(T)}, and \textbf{(H)}, and recall that the sliding vector field $F_{Z_0}$ is defined on the entire neighborhood $V_0$ (via extension) and it foliates $V_0$ by curves transverse to $\gamma_0$. In light of this, the fold line map $\psi_0:\gamma_0\rightarrow \gamma_0$ given in \eqref{slmap} is still defined herein in the same way. Nevertheless, in this case, $\psi_0$ is defined through orbits of $X_0$, $Y_0$ and virtual sliding orbits of $Z_0$ for some points of $\gamma_0$. 

In fact, remark that $p_0$ splits the curve $\gamma_0$ into two connected components named $C_\gamma^1$ and $C_\gamma^2$. Analogously, $p_0$ splits $\zeta_0$ into $C_\zeta^1$ and $C_\zeta^2$. Without loss of generality, assume that $C_\gamma^1$ and $C_\gamma^2$ are mapped onto $C_\zeta^1$ and $C_\zeta^2$ through the orbits of $X_0$ and $Y_0$, respectively. Now, one of the components of $\zeta_0$, say it $C_\zeta^1$, is contained in $\s^s$ and the other one is contained in $\s^c$. 

Thus, the points $p\in\gamma_0$ and $\psi_0(p)\in\gamma_0$ are just connected by an orbit of $Z_0$ if, and only if  $p\in \overline{C_\gamma^1}$ (which is mapped onto $\overline{ C_\zeta^1} \subset \overline{\s^s}$ by orbits of $Z_0$). It follows that only the restriction of $\psi_0$ to $\overline{C_\gamma^1}$ describes the dynamics of $Z_0$. 

%\textcolor{red}{revisar abaixo e pensar na notacao para os conjuntos. é a melhor opção? Diminuir número de condições?}

\begin{definition}\label{lambda1}
	Define $\Lambda_1$ as the set of vector fields $Z_0\in\Or$ such that
	\begin{enumerate}[i)]
		\item $Z_0$ satisfies hypotheses \textbf{(G)}, \textbf{(T)} and \textbf{(H)};
		\item $F_{Z_0}$ is transverse to $\zeta_0$ at $p_0$;
		\item The fold line map $\psi_{0}:\gamma_0\rightarrow\gamma_0$ induced by $Z_0$ has a hyperbolic fixed point at $p_0$.
	\end{enumerate}
	If $Z_0\in\Lambda_1$, then we say that $Z_0$ has a \textbf{quasi-generic loop $\Gamma_0$ at the fold-regular singularity $p_0$}.
\end{definition}
\begin{remark} Throughout the text, we also refer to a quasi-generic loop at a fold-regular singularity simply by a quasi-generic loop.
\end{remark}	

\begin{example}\label{exa_prop}
Given $b\neq 0$ and $\ag\in\R$ such that $|\ag|\neq 1$ and $-\ag/(1-\ag)\notin[0,1]$, consider the Filippov system $Z_0=(X_0,Y_0)$ with switching manifold $\s=\{(x,y,z)\in\rn{3};\ z=0 \}$, where $X_0$ is given by
	\begin{equation}\label{X_0}
	\begin{array}{lcl}
	X_0(x,y,z)&=&\left(\begin{array}{c}
	0\\
	1\\
	y(2-3y)
	\end{array}\right),
	\end{array}
	\end{equation}
	and $Y_0$ is given by
	\begin{equation}\label{Y_0}
	\begin{array}{lcl}
	Y_0(x,y,z)&=&\left(\begin{array}{c}
	\vspace{0.2cm}-(1-\ag)x+\dfrac{\bg (1-\ag)x^2}{\ag+(1-\bg)y}\\
	\vspace{0.2cm}-1+\dfrac{\bg x}{\ag+(1-\ag)y}\\
	1-2\left(\dfrac{y(\ag+(1-\ag)y)-\bg x}{\ag+(1-\ag)y-\bg x}\right)
	\end{array}\right).
	\end{array}
	\end{equation}
	Therefore, $Z_0\in\Lambda_1$, and the following statements hold.
	\begin{enumerate}[i)]
		\item $Z_0$ has a quasi-generic loop $\Gamma_0$ at the fold-regular singularity $(0,0,0)$, which is contained in the plane $x=0$;
		\item $X_0,Y_0$ are vector fields of class $\mathcal{C}^{\infty}$ around $\Gamma_0$;
		\item In the plane $x=0$, $Z_0$ coincides with the Filippov system $Z_0^*=(X_0,Y_0^*)$, where $Y_0^*(x,y,z)=(0,-1,1-2y)$;
		\item The fold line map of $Z_0$ is given by $\psi_{Z_0}(x)=\ag x+\er(x^2)$. 
	\end{enumerate}
\end{example}

In the result below, we show the robustness of quasi-generic loops in one-parameter families of Filippov systems.

\begin{mtheorem}\label{cod1thm}
	Given $Z_0\in\Lambda_1$. There exist a solid torus $\mathcal{A}_0$ around $\Gamma_0$, a neighborhood $\V_0$ of $Z_0$ in $\Or$ and a $\Cr$ function $\zeta:\V_0\rightarrow \R$,  such that $\zeta(Z_0)=0$, and $\zeta(Z)=0$ if, and only if, $Z$ has a unique quasi-generic loop $\Gamma$ at a fold-regular singularity $p$ contained in $\mathcal{A}_0$. Furthermore, $0$ is a regular value of $\zeta$, and thus $\Lambda_1$ is a codimension one $\Cr$-submanifold of $\Or$.
\end{mtheorem}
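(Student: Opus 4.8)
The plan is to exhibit $\Lambda_1$ locally as the regular zero set of a $\Cr$ \emph{defect function} $\zeta$ measuring the failure of the homoclinic-like loop to close. First I would fix coordinates near the base point: by Vishik's Normal Form (Proposition~\ref{Vishik}) choose $(x,y,z)$ centered at $p_0$ with $X_0=(0,1,y)$ and $\s=\{z=0\}$, so that the fold curve $\gamma_0$ is the $x$-axis and $p_0$ is the origin. Because the visible-fold conditions $Xf=0$, $X^2f\neq0$, $Yf\neq0$ are open, every $Z=(X,Y)$ in a small neighborhood $\V_0$ of $Z_0$ in $\Or$ retains a $\Cr$ fold curve $\gamma_Z\subset\s$ near $\gamma_0$, varying $\Cr$-ly with $Z$; I parametrize it by the coordinate $x$, writing $\gamma_Z(x)$.

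Next I would reconstruct the return apparatus for $Z\in\V_0$. Using transversality $(G_2)$ of $X$ at $p_0^+$ together with the regular connection $\Gamma_0^R$ of $(G_3)$, I build flow boxes of $X$ and $Y$ along $\Gamma_0$ whose union forms a solid torus $\mathcal{A}_0$ (a tubular neighborhood of the closed loop $\Gamma_0$); inside $\mathcal{A}_0$ the transition maps $\mathcal{P}_Z^+$ and $\mathcal{P}_Z^-$, hence $\mathcal{P}_Z=\mathcal{P}_Z^-\circ\mathcal{P}_Z^+$, are $\Cr$ and depend $\Cr$-ly on $Z$ by the $\Cr$ dependence of flows and transition maps on the field. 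Setting $\eta_Z:=\mathcal{P}_Z(\gamma_Z)$ (so $\eta_{Z_0}=\zeta_0$), hypothesis \textbf{(T)} is an open transversality condition, so for $Z\in\V_0$ the curves $\gamma_Z$ and $\eta_Z$ meet in a single transversal point $q(Z)$, a $\Cr$ function of $Z$ provided by the Implicit Function Theorem. Let $p(Z)\in\gamma_Z$ be the unique preimage of $q(Z)$ under the diffeomorphism $\mathcal{P}_Z|_{\gamma_Z}\colon\gamma_Z\to\eta_Z$, and define
\begin{equation*}
\zeta(Z):=x\big(q(Z)\big)-x\big(p(Z)\big),
\end{equation*}
the signed gap between $p(Z)$ and $q(Z)$ along $\gamma_Z$. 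Then $\zeta$ is $\Cr$, and $\zeta(Z)=0$ exactly when $p(Z)=q(Z)$, in which case the common point is a fold-regular singularity whose forward $X$- and $Y$-orbits return to it, i.e. a homoclinic-like loop. Hypothesis \textbf{(H)} gives $p(Z_0)=q(Z_0)=p_0$, whence $\zeta(Z_0)=0$. Conversely, since \textbf{(G)}, \textbf{(T)}, $(ii)$ and $(iii)$ are all open, any $Z\in\V_0$ with $\zeta(Z)=0$ satisfies them together with \textbf{(H)} and so lies in $\Lambda_1$; uniqueness of the loop inside $\mathcal{A}_0$ follows from the uniqueness of the coincidence point $q(Z)$, after shrinking $\V_0$ and $\mathcal{A}_0$.

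It remains to prove that $0$ is a regular value, i.e. that $d\zeta(Z_0)\neq0$ on $T_{Z_0}\Or$, for which I exhibit a direction of nonzero variation. Fix a point in the interior of the regular arc $\Gamma_0^R\subset M^-$ and a bump perturbation $Y_\e=Y_0+\e\,v$ supported in a flow box there, leaving $X_0$, the fold curve $\gamma_0$ and $\mathcal{P}^+$ untouched. A standard flow-box estimate shows that to first order such a perturbation translates $\mathcal{P}^-$ near $q_0$ by $\e\,c+O(\e^2)$, with $c\in T_{q_0}\s$ prescribable. Writing $\eta_0(u):=\mathcal{P}_{Z_0}(\gamma_0(u))$ and the loop condition as $\gamma_0(x_q)=\eta_0(x_p)+\e\,c+O(\e^2)$, linearization at $\e=0$ (where $x_q=x_p=0$) yields the planar system $\gamma_0'(0)\,\dot x_q-\eta_0'(0)\,\dot x_p=c$ in $T_{p_0}\s$. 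By \textbf{(T)} the vectors $\gamma_0'(0)$ and $\eta_0'(0)$ are independent, so this $2\times2$ system is invertible and $\frac{d}{d\e}\zeta(Z_\e)\big|_{0}=\dot x_q-\dot x_p$ is a nonzero linear functional of $c$; choosing $v$ so that $c$ lies outside its kernel gives $\frac{d}{d\e}\zeta(Z_\e)\big|_{0}\neq0$. Hence $d\zeta(Z_0)\neq0$, so after shrinking $\V_0$ the function $\zeta$ is a $\Cr$ submersion, $\Lambda_1\cap\V_0=\zeta^{-1}(0)$ is a codimension one $\Cr$ submanifold of $\Or$, and the theorem follows.

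I expect two points to require the most care. The first is the technical verification that the whole apparatus ($\gamma_Z$, $\mathcal{P}_Z$, $q(Z)$, $p(Z)$, and finally $\zeta$) depends $\Cr$-ly on $Z$ in the infinite-dimensional space $\Or$ with its $\Cr$ topology, together with the correct handling of the non-invertibility of the fold transition $\mathcal{P}_Z^+$ by restricting to the branch over $\overline{C_\gamma^1}$ on which genuine orbits of $Z$ exist. The second, and the analytic heart of the argument, is the first-order computation above: it is precisely the transversality hypothesis \textbf{(T)} that makes the $2\times2$ coincidence matrix invertible and thereby guarantees that $q(Z)$ and $p(Z)$ move at different rates, so that the loop breaks transversally and $0$ is a regular value of $\zeta$.
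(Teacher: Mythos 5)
Your proposal is correct, but its central construction is genuinely different from the paper's. The paper defines the defect function as the $y$-coordinate of a fixed point of the \emph{two-dimensional} full first return map: since $Z_0\in\Lambda_1$ forces $\mathcal{P}_{Z_0}$ to have a hyperbolic fixed point at $p_0$ (eigenvalues $0$ and $\ag$ with $|\ag|\neq 1$, Lemma \ref{genericlemma}), the Implicit Function Theorem gives a unique persistent fixed point $p_Z$ for each nearby $Z$, and $\zeta(Z)=p_Z^y$ vanishes exactly when $p_Z$ lies on the fold line $S_Z$. Your defect function instead measures the signed gap along $\gamma_Z$ between the transversal intersection $q(Z)$ of $\gamma_Z$ with $\eta_Z=\mathcal{P}_Z(\gamma_Z)$ and its preimage $p(Z)$ under $\mathcal{P}_Z|_{\gamma_Z}$, so persistence rests only on hypothesis \textbf{(T)}, not on hyperbolicity; correspondingly, your regularity computation inverts the $2\times 2$ matrix with columns $\gamma_0'(0)$ and $-\eta_0'(0)$, again using only \textbf{(T)}, whereas the paper translates the return map vertically by $(0,-\lambda v)$ and tracks the fixed point (an argument whose Implicit Function Theorem step implicitly needs $\ag\neq 1$). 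What your route buys is a clean decoupling: the loop-closing condition alone defines the codimension-one manifold, and condition $(iii)$ of Definition \ref{lambda1} enters only as an open condition guaranteeing that nearby zeros are quasi-generic. What the paper's route buys is that the persistent fixed point $p_Z$, and its position relative to $\s^c$ and $\s^s$, are precisely the objects reused in the proofs of Theorems \ref{bifthm} and \ref{stabthm}, so its $\zeta$ feeds directly into the bifurcation analysis. Both arguments lean on the same unproved (but standard) perturbation fact — that a bump perturbation of $Y$ supported along the regular arc $\Gamma_0^R$ translates the return map, to first order, by a prescribable vector — so the two proofs are at a comparable level of rigor on that point; note only that, as you do, one must get nonvanishing of $d\zeta$ on all of $\zeta^{-1}(0)$ (not just at $Z_0$), which your final shrinking of $\V_0$ via continuity of $d\zeta$ handles, and which the paper handles by running its computation at an arbitrary $Z^*\in\zeta^{-1}(0)$.
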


Now, we distinguish the following situations.
\begin{enumerate}
	\item [\textbf{(Cyl)}] The mapping $\psi_0$ preserves the connected components $C_\gamma^1$ and $C_\gamma^2$ of $\gamma_0$;
	\item [\textbf{(Mob)}] The mapping $\psi_0$ exchanges the connected components $C_\gamma^1$ and $C_\gamma^2$ of $\gamma_0$.
\end{enumerate}

Define $\Lambda_1^{C}$ and $\Lambda_1^{M}$ as the subsets of $\Lambda_1$ containing the Filippov systems $Z_0$ satisfying \textbf{(Cyl)} and \textbf{(Mob)}, respectively, and consider the following cases.
\begin{enumerate}
	\item [\textbf{(N)}] The hyperbolic fixed point $p_0$ of $\psi_0$ is attractive;
	\item [\textbf{(S)}] The hyperbolic fixed point $p_0$ of $\psi_0$ is repulsive.
\end{enumerate}

Notice that, if $Z_0\in\Lambda_1^C$, then $\gamma_0$ self-connects through orbits of $X_0$, $Y_0$, $F_{Z_0}$ and virtual orbits of $F_{Z_0}$ as a topological cylinder. Nevertheless, if $Z_0\in\Lambda_1^M$, then $\gamma_0$ self-connects as a topological Möbius strip. See Figure \ref{classes_fig}.

\begin{figure}[h!]
	\centering
	\bigskip
	\begin{overpic}[width=13cm]{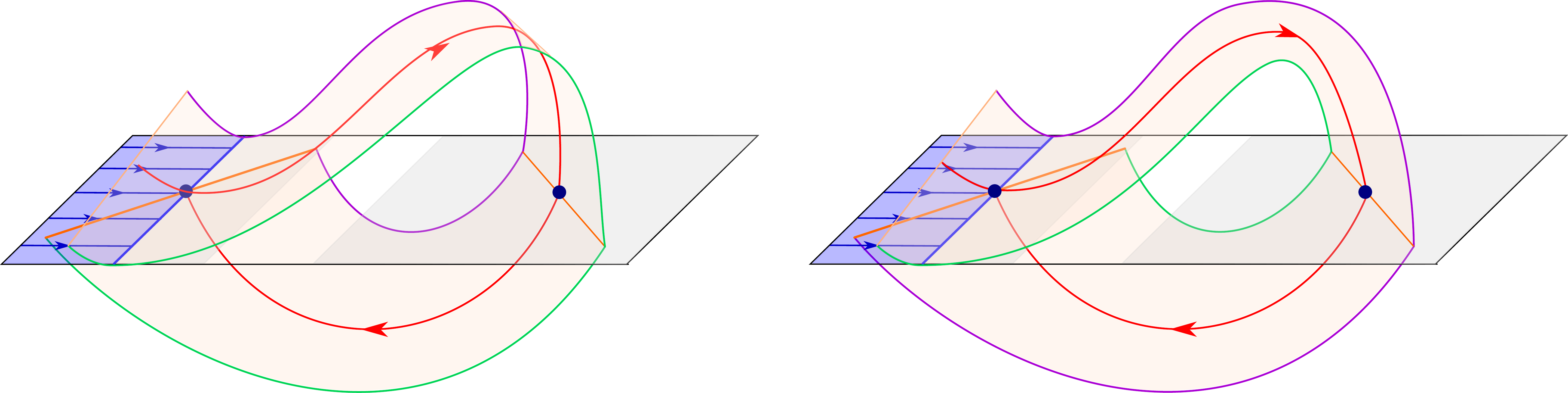}
		%									\begin{overpic}[grid,tics=5,width=13cm]{Figures/classes.pdf}			
		\put(6,6){{\footnotesize $A$}}		
		\put(15,17){{\footnotesize $B$}}
		\put(1,10.5){{\footnotesize $A'$}}		
		\put(19,16){{\footnotesize $B'$}}			
		\put(58,6){{\footnotesize $A$}}		
		\put(67,17){{\footnotesize $B$}}
		\put(52.5,10.5){{\footnotesize $B'$}}		
		\put(72,16){{\footnotesize $A'$}}	
		\put(25,-3){{\footnotesize $(a)$}}		
		\put(75,-3){{\footnotesize $(b)$}}											
		%		\put(0,10){{\footnotesize $\s$}}	
		%		%\put(57,10){{\footnotesize $\s$}}	
		%		\put(26,22){{\footnotesize $\Gamma_0$}}			
		%		\put(10,20){{\footnotesize $X_0$}}	
		%		\put(10,2){{\footnotesize $Y_0$}}							
		
	\end{overpic}
	\bigskip
	\caption{Quasi-generic loop $\Gamma_0$ of $(a)$ $Z_0\in\Lambda_1^C$ and $(b)$ $Z_0\in\Lambda_1^M$.}	\label{classes_fig}
\end{figure}

It is worth mentioning that, if $Z_0\in\Lambda_1^C$, all the iterations of the fold line map $\psi_0$ (defined in the fold line $\gamma_0$) captures the dynamics of $Z_0$, since $\psi_{0}(\overline{C^1_{\gamma}})\subset \overline{C^1_{\gamma}}$ and thus $\psi_0\big|_{\overline{C^1_{\gamma}}}$ defines a dynamical system in $\overline{C^1_{\gamma}}$. Although, it does not hold when $Z_0\in\Lambda_1^M$, since $\psi_{0}(\overline{C^1_{\gamma}})\subset \overline{C^2_{\gamma}}$, which means that $\psi_{0}\big|_{\overline{C^1_{\gamma}}}$ can not be iterated. In Section \ref{propertiesfr} below, we discuss how to adapt the fold line map $\psi_0$ to correctly describe the dynamics of $Z_0\in\Lambda_1^M$ in $\gamma_0$.

%	No primeiro caso, o campo eh transiente e vemos que as duas curvas se colam como um cilindro atraves do deslize. Nesse caso, $\psi_0$ descreve exatamente o comportamento das orbitas e pode ser iterado em qualquer ordem.
%	
%	No segundo caso, eh bem mais complicado, pq o mapa $\psi_0$ nao pode ser iterado novamente e teriamos que conhecer o que acontece com $\pi(Z_0)^n(S^0_{Z_0})$. Aparentemente, nesse caso as duas curvas tem que se colar atraves de uma fita de Mobius. De fato, um ponto $(0,v)$, com $v<0$ negativo vai atraves de $\pi(Z_0)$ para o ponto $(v,0)$, e este vai ser levado para o ponto $(0,\tilde{v})$, com $\tilde{v}>0$ atraves do deslize. Logo, a menos de difeomorfismo, o fluxo liga o segmento $(0,v)$ com  o segmento $(-v,0)$, $v>0$.provando a correlacao com a faixa de mobius. 

In the remaining results of this section, we consider only vector fields $Z_0\in\Lambda_1^C$, in order to provide an amenable analysis, nevertheless we believe that the same conclusions hold for vector fields in $\Lambda_1^M$ through slight modifications. 

The next result is devoted to identify minimal sets  bifurcating from a quasi-generic loop of a Filippov system $Z_0\in\Lambda_1^C$. 

\begin{mtheorem}\label{bifthm}
	Let $Z_0\in\Lambda_1^{C}$ having a quasi-generic loop $\Gamma_0$ at a fold-regular singularity $p_0\in\s$ and consider the torus $\mathcal{A}_0$ given by Theorem \ref{cod1thm}. If $\mathcal{Z}:(-\e,\e)\rightarrow \Or$ is a one-parameter $\mathcal{C}^1$ family such that $\mathcal{Z}(0)=Z_0$ and $\mathcal{Z}$ is transverse to $\Lambda_1$, then the following statements hold.
	\begin{enumerate}
		\item If $Z_0$ satisfies condition $(N)$, then $\mathcal{Z}(\cg)$ has a unique closed connection $\Gamma_{\cg}$ in $\mathcal{A}_0$ which is a sliding cycle when $\cg<0$ and an attractive hyperbolic crossing limit cycle when $\cg>0$, or vice-versa (see Figure \ref{unf_fig}).
		\item If $Z_0$ satisfies condition $(S)$, then $\mathcal{Z}(\cg)$ has a unique hyperbolic crossing limit cycle for either $\cg<0$ or $\cg>0$, and it has at most a unique sliding cycle in $\mathcal{A}_0$.
	\end{enumerate}
\end{mtheorem}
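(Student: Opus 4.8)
The plan is to reduce the detection of closed connections inside the solid torus $\mathcal{A}_0$ to a fixed-point problem for a one-dimensional first-return map and then to track this map as the parameter varies. First I would note that hypotheses \textbf{(G)} and \textbf{(T)} are open, so for every $Z=\mathcal{Z}(\cg)$ with $\cg$ small the fold curve $\gamma$, its image $\zeta_\gamma=\mathcal{P}_\cg(\gamma)$, the neighborhood $V_\cg$ with its $F_{\mathcal{Z}(\cg)}$-foliation (extended as in Section \ref{globalconsec}), and the fold line map $\psi_\cg:\gamma\rightarrow\gamma$ built as in \eqref{slmap} all persist. Since by Definition \ref{lambda1} the map $\psi_0$ has a hyperbolic fixed point at $p_0$ with $|\psi_0'(p_0)|\neq1$, the Implicit Function Theorem provides a $\Cr$ family of hyperbolic fixed points $p_\cg$ of $\psi_\cg$, with the same multiplier sign and hence the same stability type (attractive in case $(N)$, repulsive in case $(S)$). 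The decisive point is that $\psi_\cg$ is a \emph{genuine} dynamical map only on the component $\overline{C^1_{\gamma}}$ that is carried into $\overline{\s^s}$; on $\overline{C^2_{\gamma}}$ it is defined through virtual sliding and has no dynamical content. Thus the bifurcation is governed by which side of $\gamma$ the fixed point $p_\cg$ occupies, i.e.\ by the sign of the splitting quantity $\mu(\cg):=\zeta(\mathcal{Z}(\cg))$ supplied by Theorem \ref{cod1thm}; transversality of $\mathcal{Z}$ to $\Lambda_1$ gives $\mu'(0)\neq0$, so $\mu$ is monotone and changes sign exactly at $\cg=0$.

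Next I would convert the two sides of the splitting into the two types of connection. When $p_\cg\in\overline{C^1_{\gamma}}$ (genuine sliding) the fixed point of $\psi_\cg$ is realized by an honest concatenation of an $X$-arc, a $Y$-arc and a sliding arc of $F_{\mathcal{Z}(\cg)}$, yielding a sliding cycle $\Gamma_\cg$; since $\psi_\cg|_{\overline{C^1_{\gamma}}}$ is a contraction in case $(N)$ (an expansion in case $(S)$) it has a single fixed point, and because the fold is visible every closed connection meeting $\s^s$ in $\mathcal{A}_0$ must reach $\gamma$ and so corresponds to such a fixed point. On the opposite side $p_\cg$ becomes virtual: the orbit that formerly slid now returns to $\s^c$ and crosses, so no sliding cycle exists and one must instead seek fixed points of the crossing return map $\mathcal{P}_\cg$ on $\overline{\s^c}\cap\mathcal{A}_0$. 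Here I would use the tubular flow description of Section \ref{globalconsec} to confine all crossing orbits of $\mathcal{A}_0$ to a section transverse to $\Gamma_0$, and establish that the transverse contraction produced by the fold return $\mathcal{P}_\cg^+$ together with the sliding foliation collapses the a priori two-dimensional return essentially to one dimension, so that $\mathcal{P}_\cg$ carries a unique fixed point whose single nontrivial characteristic multiplier lies inside (resp.\ outside) the unit circle precisely when $|\psi_0'(p_0)|<1$ (resp.\ $>1$). Because $\mu$ fixes the sign, the two types are mutually exclusive, giving exactly one closed connection per side.

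With these facts the statements follow by bookkeeping. In case $(N)$, $|\psi_0'(p_0)|<1$: on one side of $\cg=0$ the unique connection in $\mathcal{A}_0$ is the sliding cycle $\Gamma_\cg$ and on the other it is a crossing limit cycle whose multiplier has modulus $<1$, hence attractive and hyperbolic; interchanging the roles of the two signs of $\mu$ yields the ``or vice-versa'' alternative, matching item $(1)$. In case $(S)$, $|\psi_0'(p_0)|>1$, the same construction produces a repulsive hyperbolic crossing limit cycle; since repulsion prevents the clean forward-invariant exchange of stability enjoyed in case $(N)$, the genuine-sliding fixed point of $\psi_\cg$ need not survive for all small $\cg$ on its side and so accounts for at most one sliding cycle, while the crossing cycle persists on a single prescribed side of $\cg=0$, which is item $(2)$.

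The hard part will be the construction and analysis of the crossing return map across the fold, to be carried out in Section \ref{struct_sec}. Unlike the pure-sliding situation of Section \ref{sliding_sec}, where the sliding flow reaches $\gamma_0$ in finite time and collapses the dynamics onto the one-dimensional fold line exactly, a crossing limit cycle contains no sliding arc, so one must verify by hand that the two-dimensional Poincar\'e map $\mathcal{P}_\cg$ genuinely reduces to a one-dimensional map with a single hyperbolic fixed point, and that the position of its multiplier relative to the unit circle is dictated by $\psi_0'(p_0)$. The two delicate points are the loss of smoothness of the return map along the boundary $\gamma$ between $\s^c$ and $\s^s$, where the crossing branch and the sliding branch must be matched continuously, and the quantitative comparison of the transverse contraction of the fold reflection $\mathcal{P}_\cg^+$ with the sliding projection, which is what ultimately identifies the stability of the crossing cycle with the stability type of $p_0$ under $\psi_0$.
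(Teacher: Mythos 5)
Your overall reduction coincides with the paper's: crossing cycles are detected as the fixed point of the extended two-dimensional return map $\mathcal{P}_{\cg}$, sliding cycles as the fixed point of the extended fold line map $\Psi_{\cg}$, transversality to $\Lambda_1$ makes the splitting quantity $\mu(\cg)=\zeta(\mathcal{Z}(\cg))$ change sign at $\cg=0$, and the nontrivial multiplier of the crossing cycle is $\psi_0'(p_0)$ (in the paper this is Lemma \ref{genericlemma}: the eigenvalues of $\mathcal{P}_0$ are $0$ and $\ag$, and $\Psi_0'(0)=\ag$). The genuine gap sits exactly where the paper does its real work. The fixed point $x_{\cg}$ of the extension $\Psi_{\cg}$ exists for \emph{all} small $\cg$, irrespective of the sign of $\mu(\cg)$; whether it yields a genuine sliding cycle is the separate question of whether $x_{\cg}$ lies in the domain $\sigma_{\cg}^{FL}=\mathcal{P}_{\cg}^{-1}\bigl(\mathcal{P}_{\cg}(S_{\cg}\cap\widetilde{V_1})\cap \s^{s}\bigr)$, i.e.\ whether $\mathcal{P}_{\cg}(x_{\cg})\in\s^{s}$. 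You assert that this is governed by the same sign $\mu(\cg)$ (``on the opposite side $p_\cg$ becomes virtual \dots so no sliding cycle exists'', ``the two types are mutually exclusive''), and you blur the issue by using one symbol $p_\cg$ both for the fixed point of $\psi_\cg$ (which always lies \emph{on} the fold line) and for the fixed point of $\mathcal{P}_\cg$ (whose transverse coordinate is what $\zeta$ measures). The implication is never proved, and it is not formal: the paper proves it, in case $(N)$ only, by locating the intersections $x_1^{\pm},x_2^{\pm}$ of the invariant manifolds $W^1=W_0^{s}$, $W^2=W_{\ag}^{s}$ of $p_{\cg}$ with the fold line, using nodal \emph{attractivity} to conclude that $\mathcal{P}_{\cg}(x_i^{\pm},0,0)$ lies on the same side of the fold line as $p_{\cg}$, and then comparing $x_{\cg}$ with $x_2^{\pm}$ via monotonicity of $\psi_{\cg}$ and the orientation $k<0$ (Figure \ref{nodebif_fig}). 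Because this argument needs attractivity, it breaks for a saddle; that is precisely why item $(2)$ of the theorem is weaker (``at most a unique sliding cycle'') and why the paper's conclusion leaves the case-$(S)$ bifurcation diagram open. Your blanket claim of ``exactly one closed connection per side'' is therefore not only unproved but, in case $(S)$, stronger than what is true to the authors' knowledge --- and you then contradict it yourself when you concede that the sliding fixed point ``need not survive''.

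Two secondary points. First, you misplace the difficulty: the one-dimensional collapse of the crossing return map is not the delicate step and does not involve the sliding foliation; it comes for free from Vishik's normal form, since the transition map $\mathcal{T}_0(x,y)=(x,\sqrt{y^2+2\e})$ annihilates the $y$-direction at the fold, which is the source of the eigenvalue $0$ in Lemma \ref{genericlemma}. Second, in case $(S)$ the crossing cycle is hyperbolic of \emph{saddle} type (multipliers $0$ and $\ag$ with $|\ag|>1$), not repulsive; the theorem only asserts hyperbolicity, so this is harmless, but it signals that the trivial multiplier is not being tracked.
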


\begin{example}\label{examplebif}
Consider the one-parameter family of Filippov systems $Z_{\cg}=(X_0,Y_{\cg})$, where $X_0$ is given by \eqref{X_0}, $Y_{\cg}=dM\circ Y_\cg^*\circ M^{-1}$, $Y_\cg^*$ is the vector field given by
\begin{equation*}
\begin{array}{lcl}
Y_\cg^*(x,y,z) &=&\left(\begin{array}{c}
\cg\\
-1\\
1 -2y
\end{array}\right),
\end{array}
\end{equation*}
and $M:\rn{3}\rightarrow \rn{3}$ is the map given by
\begin{equation*}
\begin{array}{lcl}
M(x,y,z)&=&\left(\begin{array}{c}
\vspace{0.2cm}x\left(2-y+x(y-1)\right)\\
\vspace{0.2cm}y-x(y-1)\\
z
\end{array}\right).
\end{array}
\end{equation*}
Then $Z_{\cg}$ is an unfolding of the Filippov system $Z_0$ given by Example \ref{exa_prop} (with $\ag=2$ and $b=1$) at $\cg=0$, and there exists a solid torus $\mathcal{A}_0$ around the quasi-generic loop $\Gamma_0$ at the fold-regular singularity $(0,0,0)$ of $Z_0$ such that the following statements hold.
\begin{enumerate}[i)]
	\item If $\cg>0$, then $Z_{\cg}$ has a unique sliding cycle $\Gamma_\cg$ in $\mathcal{A}_0$, which is of repelling type;
	\item If $\cg=0$, then $Z_{\cg}$ has a unique quasi-generic loop $\Gamma_0$ passing through a fold-regular singularity in $\mathcal{A}_0$;
	\item If $\cg<0$, then $Z_{\cg}$ has a unique crossing limit cycle $\Gamma_\cg$ in $\mathcal{A}_0$, which is hyperbolic and of saddle type. 
\end{enumerate}

It is worth mentioning that, concerning the family $Z_{\cg}$ above, the first part of Theorem \ref{bifthm} also holds for the case when $Z_0$ satisfies $(S)$, despite of the stability.
\end{example}

\begin{figure}[h!]
	\centering
	\bigskip
	\begin{overpic}[width=15cm]{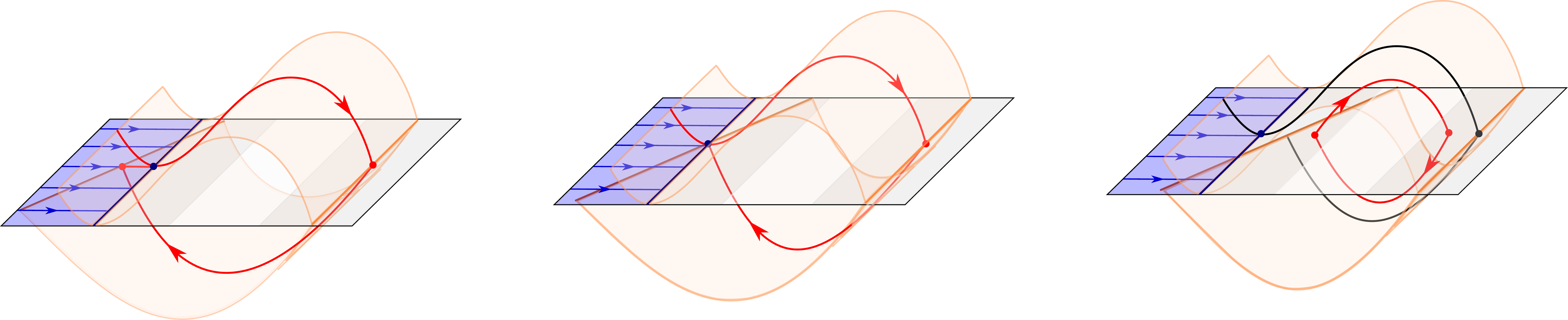}
		%									\begin{overpic}[grid,tics=5,width=8cm]{Figures/cyl_mob.pdf}								
		\put(46,-3){{\footnotesize $\cg=0$}}
		\put(48,16){{\footnotesize $\Gamma_0$}}																
		\put(11,-3){{\footnotesize $\cg<0$}}
		\put(12,14){{\footnotesize $\Gamma_\cg$}}			
		\put(82,-3){{\footnotesize $\cg>0$}}	
		\put(90,5){{\footnotesize $\Gamma_\cg$}}			
	\end{overpic}
	\bigskip
	\caption{A versal unfolding of $Z_0\in\Lambda_1^C$ satisfying (N) in $\Or$.}	\label{unf_fig}
\end{figure} 	

Now, we combine the informations encoded by the first return map and the sliding dynamics to analyze the stability of a quasi-generic loop.

\begin{mtheorem}\label{stabthm}
	Let $Z_0\in\Lambda_1^{C}$ having a quasi-generic loop $\Gamma_0$ at a fold-regular singularity $p_0\in\s$ and consider the torus $\mathcal{A}_0$ given by Theorem \ref{cod1thm}. The following statements hold.
	\begin{enumerate}
		\item If $Z_0$ satisfies condition $(N)$, then $\Gamma_0$ is an asymptotically stable minimal set;
		\item If $Z_0$ satisfies condition $(S)$, then there exists a piecewise-smooth curve $\bg$ passing through $p_0$ such that the basin of attraction of $\Gamma_0$ is given by 
		$$\mathcal{B}=\left\{p\in\mathcal{A}_0;\textrm{ there exist a time }t(p) \textrm{ such that }\p_{Z_0}(t(p);p)\in\bg\right\}.$$
		Furthermore, one of the two connected components of $\bg\setminus\{p_0\}$ is contained in $\s^s$ and the other one contained in $\s^c$.   
	\end{enumerate}
	
\end{mtheorem}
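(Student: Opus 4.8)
The plan is to reduce everything to the first return map $\mathcal{P}_0$ and the fold line map $\psi_0$ from Section \ref{struct_sec}, exploiting that the fold at $p_0$ forces a strong transverse contraction, so that the recurrent dynamics near $\Gamma_0$ is essentially one-dimensional and ruled by $\psi_0$. First I would record the degeneracy produced by the fold: writing $X_0$ in the Vishik normal form of Proposition \ref{Vishik}, the flow map across the fold depends on the coordinate transverse to $\gamma_0$ only through its square, so $D\mathcal{P}_0(p_0)$ has rank one, with kernel transverse to $\gamma_0$ and image $T_{p_0}\zeta_0$; call $a$ its nonzero eigenvalue. Combined with the collapse of the $F_{Z_0}$-foliation onto $\gamma_0$ in the stable sliding region, this shows that a single passage of the return dynamics crushes a full two-dimensional neighborhood of $p_0$ onto the one-dimensional set $\gamma_0\cup\zeta_0$, on which the induced dynamics has multiplier $\psi_0'(p_0)$ along $\gamma_0$ and $a$ along $\zeta_0$. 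The analysis behind Theorem \ref{bifthm} identifies $a$ as the nontrivial multiplier of the nearby crossing limit cycle, so $a$ and $\psi_0'(p_0)$ lie on the same side of $1$ (both contracting in case (N), both expanding in case (S)).

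Under (N) I would argue that the reduced one-dimensional map contracts toward $p_0$, and combine this with the transverse collapse to produce a nested family of positively invariant neighborhoods $U_n\subset\mathcal{A}_0$ of $\Gamma_0$ with $\bigcap_n U_n=\Gamma_0$: the fold and the stable sliding pull trajectories onto $\gamma_0\cup\zeta_0$ in finite time, and the contraction of the reduced map then drives them to $p_0$, hence along $\Gamma_0$. This yields Lyapunov stability together with attraction, i.e. asymptotic stability. Minimality follows because $\Gamma_0$ is a single periodic trajectory of $Z_0$ (it leaves $p_0$ through the visible fold along $X_0$, returns along $Y_0$, and $p_0$ is not an equilibrium), so it admits no proper nonempty closed invariant subset.

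Under (S) the map $\psi_0$ repels along $\gamma_0$ while the transverse direction remains super-contracted. The local set of points attracted to $\Gamma_0$ is then the stable set of the reduced dynamics, which I expect to be one-dimensional: its $\s^{ss}$-branch should be the single leaf $\ell_0$ of $F_{Z_0}$ through $p_0$ (whose points slide straight into $p_0\in\Gamma_0$), and its $\s^c$-branch should be $\mathcal{P}_0^{-1}(\ell_0)\cap\overline{\s^c}$ (crossing points whose image lands on $\ell_0$ and then slides into $p_0$). I would show these two arcs glue at $p_0$ into a piecewise-smooth curve $\bg$, transverse to $\gamma_0$, with one component of $\bg\setminus\{p_0\}$ in $\s^{ss}$ and the other in $\s^c$; here condition (ii), $F_{Z_0}\pitchfork\zeta_0$ at $p_0$ (equivalently $\psi_0'(p_0)\neq0$), is exactly what keeps the $\s^c$-branch a genuine transversal curve. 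Since the expansion of $\psi_0$ prevents any orbit from approaching $\Gamma_0$ without meeting $\bg$, the basin is precisely the saturation $\mathcal{B}=\{p\in\mathcal{A}_0:\ \exists\,t(p),\ \p_{Z_0}(t(p);p)\in\bg\}$.

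The hard part will be the rigorous control of the two-dimensional fold map near the degenerate fixed point $p_0$. Because $\mathcal{P}_0$ is non-invertible and of rank one there, the reduction to the $\psi_0$-dynamics cannot be obtained from a hyperbolic linearization and must instead be extracted from the second-order (square-root) structure of the fold together with the finite-time sliding collapse. In particular, in case (S) the delicate point is to rule out that the two-dimensional family of crossing orbits contributes an open subset of the basin — which is where the consistency of the crossing and sliding multipliers coming from Theorem \ref{bifthm} becomes indispensable — and to check that the $\s^c$-branch of $\bg$ and the leaf $\ell_0$ do meet $\gamma_0$ transversally and jointly bound $\mathcal{B}$ as claimed.
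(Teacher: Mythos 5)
Your overall reduction --- studying the full first return map $\mathcal{P}_0$ on $\s$ together with the fold line map $\psi_0$ and the sliding collapse onto $\gamma_0$ --- is exactly the paper's strategy, and your case (N) argument is essentially the paper's proof (attracting nodal fixed point of $\mathcal{P}_0$ combined with the attracting fixed point of $\psi_0$). Two caveats there: the claim that a single passage ``crushes a full two-dimensional neighborhood of $p_0$ onto $\gamma_0\cup\zeta_0$'' is false --- $D\mathcal{P}_0(p_0)$ has rank one, but $\mathcal{P}_0$ is a fold-type map whose image of a neighborhood is still two-dimensional, so crossing orbits that never enter $\s^s$ converge to $p_0$ only asymptotically, not by a finite-time collapse; and the link between the nonzero eigenvalue $a$ of $D\mathcal{P}_0(p_0)$ and $\psi_0'(p_0)$ need not be routed through Theorem \ref{bifthm}: Lemma \ref{genericlemma} shows they are literally equal, $a=\psi_0'(p_0)=\ag$, which is what makes (N)/(S) control both maps simultaneously.

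The genuine gap is in case (S), in your identification of the crossing branch of $\bg$. You propose $\mathcal{P}_0^{-1}(\ell_0)\cap\overline{\s^c}$ with the mechanism ``the first return lands on $\ell_0$ and then slides into $p_0$''. This mechanism cannot occur: for $Z_0\in\Lambda_1^C$ the normal form \eqref{firstreturn3} has $c>0$, and the preimage curve $\mathcal{P}_0^{-1}(\ell_0)$ is tangent to the $y$-axis at $p_0$ (on it, $x=\er(y^4)$), so each of its points near $p_0$ has image with second coordinate $cy^2+\er(y^4)>0$; that is, the return lands on $\ell_0\cap\s^c$, where there is no sliding at all, and $\ell_0\cap\s^c$ is not invariant under $\mathcal{P}_0$, so nothing forces such an orbit into $p_0$. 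In fact the preimage of the sliding branch $\ell_0\cap\s^s$ reduces to $\{p_0\}$ near $p_0$, so no nearby crossing point reaches $\Gamma_0$ by ``one crossing then sliding''. The correct crossing branch is the stable manifold $W^s_0$ of $\mathcal{P}_0$ associated with the eigenvalue $0$ (Proposition \ref{variedades}), intersected with $\s^c$: its points remain in $\overline{\s^c}$ under all returns (their images again have second coordinate $\approx cy^2>0$) and converge to $p_0$ through infinitely many crossings, while every other crossing orbit either escapes along the unstable direction or falls into $\s^s$, slides to $\gamma_0$, and is then repelled by $\psi_0$ unless it lies exactly on the sliding branch of $\ell_0$. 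Note also that your concern that ``the reduction cannot be obtained from a hyperbolic linearization'' because $\mathcal{P}_0$ is non-invertible and of rank one at $p_0$ is unfounded: the origin is a hyperbolic fixed point of $\mathcal{P}_0$ (eigenvalues $0$ and $\ag$, none on the unit circle), and the stable manifold theorem for $\Cr$ maps applies to non-invertible maps; this is precisely the paper's Proposition \ref{variedades}, and it is the ingredient your construction is missing.
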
	

We introduce a notion of equivalence in $\Lambda_1$ which allows us to obtain a modulus of stability for $Z_0$.

\begin{definition}\label{weakequiv}
	Let $Z,Z_0\in\Lambda_1$ having quasi-generic loops $\Gamma$ and $\Gamma_0$ at fold-regular singularities $p\in\s$ and $p_0\in\s$, respectively. We say that $Z$ and $Z_0$ are \textbf{ weakly topologically equivalent} at $(\Gamma,\Gamma_0)$ if there exist sufficiently small solid tori $\mathcal{A}_0$ and $\mathcal{A}$ containing $\Gamma_0$ and $\Gamma$, respectively, and an order-preserving homeomorphism $h:\mathcal{A}\rightarrow \mathcal{A}_0$ such that
	\begin{enumerate}[i)]
		\item $\mathcal{A}\cap \s$ and $\mathcal{A}_0\cap \s$ have connected curves $S_{Z}$ and $S_{Z_0}$ of fold-regular singularities of $Z$ and $Z_0$ intersecting $\partial \mathcal{A}\cap \s$ and $\partial \mathcal{A}_0\cap \s$ transversally, and there are no more $\s$-singularities of $Z$ and $Z_0$ contained in $\mathcal{A}\cap \s$ and $\mathcal{A}_0\cap \s$, respectively;
		\item $h:S_Z\rightarrow S_{Z_0}$ is a diffeomorphism such that $h(p)=p_0$;
		\item $h(\Gamma)=\Gamma_0$ and $h(\mathcal{A}\cap\s)=\mathcal{A}_0\cap\s$;
		\item $h$ carries orbits of $Z$ onto orbits of $Z_0$. 
	\end{enumerate}
	
\end{definition}

\begin{remark}
	Notice that, it follows from Section \ref{globalconsec} that, given $Z\in\Lambda_1$, we find a sufficiently small torus $\mathcal{A}$, such that item  $(i)$ of Definition \ref{weakequiv} is satisfied.
\end{remark}

Finally, given $Z_0\in\Lambda_1$, we define the \textbf{modulus of weak-stability} of $Z_0$ as 
\begin{equation*}
\mathcal{W}(Z_0)= \psi_0'(p_0),
\end{equation*}
where $\psi_0$ is the fold-line map of $Z_0$.
%\begin{thmD}
%	Given $Z_0,\widetilde{Z_0}\in\Lambda_1^{C}$ having quasi-generic loops $\Gamma_0$ and $\widetilde{\Gamma_0}$ at fold-regular singularities $p_0,\widetilde{p_0}\in\s$ of type $(S)$, respectively, then $Z_0$ and $\widetilde{Z}_0$ are locally topologically $\s$-equivalent at $(\Gamma_0,\widetilde{\Gamma_0})$ if and only if
%	$$\mathcal{P}(Z_0)=\mathcal{P}(\widetilde{Z_0}).$$
%\end{thmD}	
\begin{mtheorem}\label{modthm}
	Let $Z_0,\widetilde{Z_0}\in\Lambda_1^{C}$ have quasi-generic loops $\Gamma_0$ and $\widetilde{\Gamma_0}$ at fold-regular singularities $p_0,\widetilde{p_0}\in\s$ of type $(S)$, respectively.
	If $Z_0$ and $\widetilde{Z}_0$ are weakly topologically equivalent at $(\Gamma_0,\widetilde{\Gamma_0})$, then
	$$\mathcal{W}(Z_0)=\mathcal{W}(\widetilde{Z_0}).$$
\end{mtheorem}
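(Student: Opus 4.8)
The plan is to show that the weak equivalence forces a \emph{smooth} — not merely topological — conjugacy between the two fold line maps, so that the multiplier at the fixed point is preserved by the chain rule. This is the conceptual point behind calling $\mathcal{W}$ a modulus of stability: a purely topological conjugacy between one-dimensional maps detects only the hyperbolicity type of a fixed point (all expanding linear germs are mutually topologically conjugate), so $\mathcal{W}$ could never be an invariant without extra regularity. The feature of Definition \ref{weakequiv} supplying that regularity is condition $(ii)$, which requires $h$ to restrict to a \emph{diffeomorphism} on the fold curves.

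First I would apply Definition \ref{weakequiv} to the pair $(Z_0,\widetilde{Z_0})$, obtaining solid tori and an order-preserving homeomorphism $h$ between them that carries orbits of $Z_0$ onto orbits of $\widetilde{Z_0}$. By condition $(ii)$, $h$ restricts to a diffeomorphism $\phi:=h|_{S_{Z_0}}:S_{Z_0}\to S_{\widetilde{Z_0}}$ with $\phi(p_0)=\widetilde{p_0}$. Since $Z_0,\widetilde{Z_0}\in\Lambda_1^{C}$, the fold line maps $\psi_0$ and $\widetilde{\psi_0}$ of \eqref{slmap} are $\Cr$ self-maps of the respective fold curves, fixing $p_0$ and $\widetilde{p_0}$; in particular the multipliers $\psi_0'(p_0)$ and $\widetilde{\psi_0}'(\widetilde{p_0})$ are intrinsic (parametrization-independent) quantities, so $\mathcal{W}$ is well defined.

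The heart of the argument is to establish, on the component $\overline{C_\gamma^1}$ whose dynamics is realized by genuine orbits of $Z_0$, the conjugacy
\begin{equation*}
\phi\circ\psi_0=\widetilde{\psi_0}\circ\phi.
\end{equation*}
For $q\in\overline{C_\gamma^1}$ the fold line map returns $q$ to its first hit $\psi_0(q)\in S_{Z_0}$ along a trajectory concatenating an $X_0$-arc in $M^+$, a $Y_0$-arc in $M^-$, and a genuine sliding arc of $F_{Z_0}$ in $\overline{\s^s}$. By condition $(iv)$ the $h$-image of this trajectory is an orbit of $\widetilde{Z_0}$ issuing from $\phi(q)$; conditions $(i)$ and $(iii)$ force $h$ to carry $S_{Z_0}$ onto $S_{\widetilde{Z_0}}$ and the loop onto the loop without creating spurious $\s$-singularities, while the order-preserving hypothesis preserves the time-direction, hence the crossing/sliding character of each arc. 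Thus the image trajectory is exactly the one defining $\widetilde{\psi_0}(\phi(q))$, and matching endpoints on the fold curves yields the identity. Because $p_0\in\overline{C_\gamma^1}$ and $\psi_0,\widetilde{\psi_0},\phi$ are all differentiable, I then differentiate at $p_0$: using $\psi_0(p_0)=p_0$ and $\phi(p_0)=\widetilde{p_0}=\widetilde{\psi_0}(\widetilde{p_0})$, the chain rule gives $\phi'(p_0)\,\psi_0'(p_0)=\widetilde{\psi_0}'(\widetilde{p_0})\,\phi'(p_0)$, and cancelling $\phi'(p_0)\neq 0$ produces $\psi_0'(p_0)=\widetilde{\psi_0}'(\widetilde{p_0})$, i.e. $\mathcal{W}(Z_0)=\mathcal{W}(\widetilde{Z_0})$.

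The main obstacle is the middle step: upgrading the raw orbit-correspondence of condition $(iv)$ to a genuine conjugacy of fold line maps. One must verify that $h$ respects the partition of $\s$ into $\s^c$ and $\s^s$ — so that the sliding segment of each defining trajectory is sent to a sliding segment, guaranteeing that it is $\widetilde{\psi_0}$ itself (and not a higher iterate, nor a branch built from virtual sliding) that appears — and that the first-return structure to the fold curve is preserved. Both facts rest on the structural analysis of $Z_0\in\Lambda_1^{C}$ from Section \ref{globalconsec}, where $\psi_0(\overline{C_\gamma^1})\subset\overline{C_\gamma^1}$, together with the order-preserving hypothesis. Once the conjugacy on $\overline{C_\gamma^1}$ is secured, the concluding differentiation is entirely routine.
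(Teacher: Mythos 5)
Your proposal is correct, but it takes a genuinely different route from the paper's. The paper never forms the conjugacy $\phi\circ\psi_0=\widetilde{\psi_0}\circ\phi$: it works instead with the two-dimensional full first return map and first proves an Accumulation Lemma (Lemma \ref{acc_lemma}), showing that the iterates $S_n=\mathcal{P}_0^n(S_{Z_0})$ of the fold line accumulate, with definite ordering and flip properties, onto the unstable manifold $W^u_{\ag}$. It then takes a transversal $\{y=\dg\}$, extracts points $y_n=\mathcal{P}_0^n(x_n,0)\to w\in W^u_{\ag}\setminus\{(0,0)\}$ with $x_n\to 0$, transports these data through $h$ (orbit correspondence gives $\widetilde{y_n}=\widetilde{\mathcal{P}_0}^n(\widetilde{x_n})$, and Definition \ref{weakequiv}$(ii)$ gives $\widetilde{x_n}=Kx_n+\er_2(x_n)$ with $K\neq 0$), and compares growth rates: $\ag^n x_n\to\pi_1(w)\neq 0$ while $\widetilde{\ag}^n x_n\to \pi_1(\widetilde{w})/K\neq 0$, which is impossible unless $\ag=\widetilde{\ag}$. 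In other words, the paper reads the modulus off the \emph{crossing} dynamics (genuine first-return orbits in $\overline{\s^c}$), whereas you read it off the \emph{sliding} dynamics (the genuine realization of $\psi_0$ on $\overline{C_\gamma^1}$). Your route is shorter and more elementary -- it bypasses Lemma \ref{acc_lemma} entirely and reduces the theorem to the chain rule -- and the verification you single out as the crux is indeed the only delicate point, and your justification of it is sound: sliding arcs are the only orbit arcs contained in $\s$, transversal crossings can only occur at points of $\s^c$, and $X$-arcs in $M^+$ and stable-sliding arcs are forward-unique, so $h$ must send the defining trajectory of $\psi_0$ at $q$ to the defining trajectory of $\widetilde{\psi_0}$ at $h(q)$. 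Two points you should state explicitly to make the argument airtight: the conjugacy holds only on the half-curve $\overline{C_\gamma^1}$, so the derivatives at $p_0$ are one-sided -- harmless, because $\phi\circ\Psi_0$ and $\widetilde{\Psi_0}\circ\phi$ are differentiable at $p_0$ and agree on a half-neighborhood which $\psi_0$ preserves (this uses $Z_0\in\Lambda_1^C$); and if $h$ exchanges $M^+$ and $M^-$, the same argument runs with the roles of the two vector fields of $\widetilde{Z_0}$ interchanged. Both proofs ultimately rest on the same hypothesis, the smoothness of $h$ along the fold curves required by Definition \ref{weakequiv}$(ii)$, which, as you observe, is what makes a numerical modulus possible at all; what the paper's longer route buys is the Accumulation Lemma itself, whose ordering structure and fundamental domain $F_0$ describe the geometry of $Z_0$ near $\Gamma_0$ beyond the needs of this theorem.
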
	

A direct consequence of Theorem D is given in the next corollary.
\begin{corollary}
	If $Z_0\in\Lambda_1^C$ satisfies $(S)$, then $Z_0$ has $\infty$-moduli of weak-stability in $\Lambda_1^C$. It means that there are infinitely many Filippov systems $Z_n\in\Lambda_1^C$, $n\in\N$, such that $Z_{n_1}$ and $Z_{n_2}$ are not weakly topologically equivalent, for every $n_1,n_2\geq 0$ and $n_1\neq n_2$.
\end{corollary}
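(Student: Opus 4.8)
The plan is to deduce the Corollary directly from Theorem \ref{modthm}, which tells us that the modulus $\mathcal{W}(Z)=\psi_0'(p_0)$ is a \emph{weak-topological invariant} within the class of type-$(S)$ systems of $\Lambda_1^C$: if two such systems are weakly topologically equivalent, their moduli must coincide. Taking the contrapositive, any two systems in $\Lambda_1^C$ of type $(S)$ with \emph{distinct} values of $\mathcal{W}$ are necessarily weakly topologically inequivalent. Thus the entire problem reduces to a realization statement: I must exhibit infinitely many Filippov systems $Z_n\in\Lambda_1^C$, all of type $(S)$, whose moduli $\mathcal{W}(Z_n)$ are pairwise distinct.

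For the realization step I would invoke Example \ref{exa_prop} rather than build anything new. For each admissible pair $(b,\ag)$ the explicit system $Z_0=(X_0,Y_0)$ lies in $\Lambda_1$ and, by item $(iv)$ of that example, has fold-line map $\psi_{Z_0}(x)=\ag x+\er(x^2)$; hence $\mathcal{W}(Z_0)=\psi_{Z_0}'(0)=\ag$. I would then fix $b=1$ and choose any sequence of distinct reals $\ag_n\in(1,\infty)$, say $\ag_n=n+1$ for $n\in\N$, setting $Z_n$ to be the corresponding system. Three checks make each $Z_n$ admissible of the required type: first, $|\ag_n|=n+1\neq 1$, and $-\ag_n/(1-\ag_n)=\ag_n/(\ag_n-1)=1+1/(\ag_n-1)>1\notin[0,1]$, so the constraints of Example \ref{exa_prop} hold for every $\ag_n>1$; second, since $\psi_{Z_n}'(0)=\ag_n>1$ the fixed point $p_0$ is repulsive, i.e. $Z_n$ satisfies condition $(S)$; and third, because $\psi_{Z_n}'(0)=\ag_n>0$ the fold-line map is orientation-preserving near $p_0$ and therefore preserves the two connected components $C_\gamma^1,C_\gamma^2$, placing $Z_n$ in $\Lambda_1^C$ under condition \textbf{(Cyl)}.

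With the family in hand the conclusion is immediate: the moduli $\mathcal{W}(Z_n)=\ag_n$ are pairwise distinct, so by the contrapositive of Theorem \ref{modthm} no two of the $Z_n$ are weakly topologically equivalent, which is exactly the assertion that $Z_0$ has $\infty$-moduli of weak-stability in $\Lambda_1^C$ (the given $Z_0$ may itself be taken as one member of the sequence). The only genuine content beyond this short deduction lies in the realization step, and I expect that to be the main obstacle: one must be certain that the explicit family of Example \ref{exa_prop} truly populates $\Lambda_1^C\cap(S)$ for a continuum of parameters and that its modulus is precisely the linear coefficient $\ag$. Both points are however already supplied by Example \ref{exa_prop} (the membership in $\Lambda_1$, together with the normal form $\psi_{Z_0}(x)=\ag x+\er(x^2)$), so once the admissibility inequalities above are verified the remaining argument is purely formal.
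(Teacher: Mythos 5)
Your proof is correct and follows exactly the route the paper intends: the Corollary is stated as a direct consequence of Theorem \ref{modthm}, whose contrapositive makes $\mathcal{W}$ an invariant of weak topological equivalence among type-$(S)$ systems of $\Lambda_1^C$, and Example \ref{exa_prop} together with Proposition \ref{classes} supplies systems realizing any modulus $\ag>1$, so infinitely many pairwise distinct moduli (hence pairwise inequivalent systems) exist. Your admissibility checks ($|\ag_n|\neq 1$, $-\ag_n/(1-\ag_n)\notin[0,1]$, and $\ag_n>1$ forcing both \textbf{(Cyl)} and $(S)$, with $\mathcal{W}(Z_n)=\psi_{Z_n}'(0)=\ag_n$) are precisely the details the paper leaves implicit.
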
	

%	\begin{corollary}
%		Assume that $Z_0=(X_0,Y_0)\in\Or$ has a quasi-generic loop $\Gamma_0$ at a fold-regular singularity $p_0$ and let $\Sigma_1$ be the codimension one submanifold of $\Or$ given by Theorem A. Then $Z_0$ has $\infty$-moduli of weak-stability in $\Sigma_1$.
%	\end{corollary}	

%\begin{thmE}
%	Given $Z_0,\widetilde{Z_0}$ having quasi-generic loops $\Gamma_0$ and $\widetilde{\Gamma_0}$ at fold-regular singularities $p_0,\widetilde{p_0}\in\s$ of type $(N)$, then $Z_0$ and $\widetilde{Z_0}$ are locally topologically equivalent at $(\Gamma_0,\widetilde{\Gamma_0})$.
%\end{thmE}	
%
%\begin{corollary}
%	Assume that $Z_0=(X_0,Y_0)\in\Or$ has a quasi-generic loop $\Gamma_0$ at a fold-regular singularity $p_0$ and let $\Sigma_1$ be the codimension one submanifold of $\Or$ given in Theorem A. Then $Z_0$ has $0$-moduli of stability in $\Sigma_1$.
%\end{corollary}	
%}

\section{Structure of a homoclinic-like loop}\label{struct_sec}

In this section, we characterize the first return map $\mathcal{P}_0$ and the fold line map $\psi_0$ associated to a homoclinic-like loop $\Gamma_0$ of a system $Z_0\in\Or$. Furthermore, given a small solid torus $\mathcal{A}_0$ around $\Gamma_0$ and a vector field $Z$ sufficiently near to $Z_0$, we associate a first return map $\mathcal{P}_Z$ and a fold line map $\psi_Z$ which describe the dynamics of $Z$ inside $\mathcal{A}_0$.

Let $Z_0=(X_0,Y_0)\in\Or$ satisfying \textbf{(G)}, \textbf{(T)} and \textbf{(H)}. In order to characterize the first return map $\mathcal{P}_0$ given in \eqref{first}, we shall write 
$$\mathcal{P}_0=\mathcal{D}_0\circ \mathcal{T}_0,$$
where $\mathcal{D}_0$ is a diffeomorphism and $\mathcal{T}_0$ is a $\Cr$ map describing all trajectories around a fold-regular singularity. We refer to $\mathcal{T}_0$ as the \textbf{transition map} of $Z_0$ at the fold-regular singularity $p_0$. In \cite{AGN19} one finds the definition of transition maps in the planar case.

In Section \ref{trans_sec}, we construct and characterize the transition map $\mathcal{T}_0$. In Section \ref{first_sec}, we describe the complete first return map $\mathcal{P}_0$. Finally, in Section \ref{foldline_sec}, we characterize the fold line map $\psi_0$.

\subsection{Transition Map}\label{trans_sec}

Without loss of generality, assume that $p_0$ is a fold point of $X_0$ and a regular point of $Y_0$. In this case, the transition map depends only on the smooth vector field $X_0$. 

%Let $p_0^*\in\Gamma_0\cap M^+$ be such that the arc of $\Gamma_0$ between $p_0$ and $p_0^*$ is contained in $\overline{M^+}$ and consider a local transversal section $\tau$ of $X_0$ at $p_0^*$.

Since $p_0$ is a visible fold-regular singularity of $Z_0$, it follows from Proposition 33 of \cite{GT19} that there exist  $a_0<0<b_0$, and neighborhoods $\V_0$ of $Z_0$ in $\Or$ and $V_0$ of $p_0$ in $\s$ such that:
\begin{enumerate}[i)]
	\item $V_0$ is compact;
	\item each $Z\in\V_0$ has a curve $\gamma_{Z}:[a_0,b_0]\rightarrow V_0 $, composed just by visible fold-regular singularities of $Z$;
	\item $V_0\setminus \textrm{Im}(\gamma_{Z})$ has only regular-regular points of $Z$;
	\item $\gamma_Z$ intersects $V_0$ transversally at $\gamma_Z(a_0)$ and $\gamma_Z(b_0)$;
	\item $\gamma_Z(t)\in \textrm{int}(V_0)$ for each $t\in (a_0,b_0)$;
	\item $\gamma_{Z_0}(0)=p_0$.
\end{enumerate}

From Proposition \ref{Vishik}, there exist neighborhoods $U_0\subset \rn{3}$ of $p_0$ and $W_0\subset \rn{3}$ of the origin such that $V_0\subset U_0$, and a local coordinate system $(x,y,z): (U_0,p_0)\rightarrow (W_0,0)$ such that $f(x,y,z)=z$ and  $X_0$ is given by $$X_0(x,y,z)=(0,1,y).$$

We denote the set $V_0$ in the coordinates $(x,y,z)$ by $\widetilde{V_0}$. Notice that $\textrm{Im}(\gamma_{Z_0})$ coincides with a segment of the $x$-axis in the plane $z=0$ containing the origin, and the flow of $X_0$ is given by $$\p_{X_0}(t;x,y,z)=\left(x,y+t,z+\dfrac{(y+t)^2}{2}-\dfrac{y^2}{2}\right).$$

Given $\e>0$ sufficiently small, let $\tau$ be a local transversal section of $X_0$ at $p_\e^*=(0,\sqrt{2\e},\e)$ contained in the plane $z=\e$ and notice that the origin is connected to $p_\e^*$ through an orbit of $X_0$. From the Implicit Function Theorem, $\V_0$ can be considered such that, for each $Z=(X,Y)\in\V_0$, a point $(x,y,0)\in\widetilde{V_0}$ reaches $\tau$ through the flow of $X$ for a positive time $t(X;x,y)$.
%
%
%
%
%
%
%$\tau=\{(x,y,\e); (x,y,\e)\in W_0\}$, for some $\e>0$ sufficiently small, be a local 
%
%
%
%
%
%%Also, consider that $V_0$ is contained in the domain $U_0\subset \rn{3}$ of the coordinate system $(x,y,z): (U_0,p_0)\rightarrow (W_0,0) $ at $p_0$ given by Vishik's Theorem (see \cite{V}). In this coordinates, $X_0$ is written as $$X_0(x,y,z)=(0,1,y)$$ with $h(x,y,z)=z$. Thus, $\textrm{Im}(\gamma_{Z_0})$ coincides with a segment of the $x$-axis containing the origin in the plane $z=0$.
%
%Let $\tau^*=\{(x,y,\e); (x,y,\e)\in W_0\}$, for some $\e>0$ sufficiently small, and let $\tau_\e\subset U_0$ be the preimage of $\tau^*$ by the coordinates $(x,y,z)$. 
%
%Since $\tau$ is transverse to $\Gamma_0$, then the orbit of $X_0$ through $p_0$ reaches $\tau$ (transversally) at a point $p_0^*\in\tau$. Thus, we can use the Implicit Function Theorem to reduce $\V_0$ and $V_0$ such that, for each $Z=(X,Y)\in\V_0$, a point $p$ of $V_0$ reaches $\tau$ through the flow of $X$ for a positive time $t(X,p)$.

Therefore, given $Z=(X,Y)\in\V_0$, we define the \textbf{full transition map} $\mathcal{T}_Z: \widetilde{V_0}\rightarrow \tau$ of $Z$ by
\begin{equation}\label{transmap}
\mathcal{T}_Z(x,y,0)=\p_X(t(X;x,y);x,y,0),
\end{equation}
and notice that the dependence of $\mathcal{T}_Z$ on $Z$ is of class $\Cr$. See Figure \ref{transmap_fig}.

\begin{figure}[h!]
	\centering
	\bigskip
	\begin{overpic}[width=6cm]{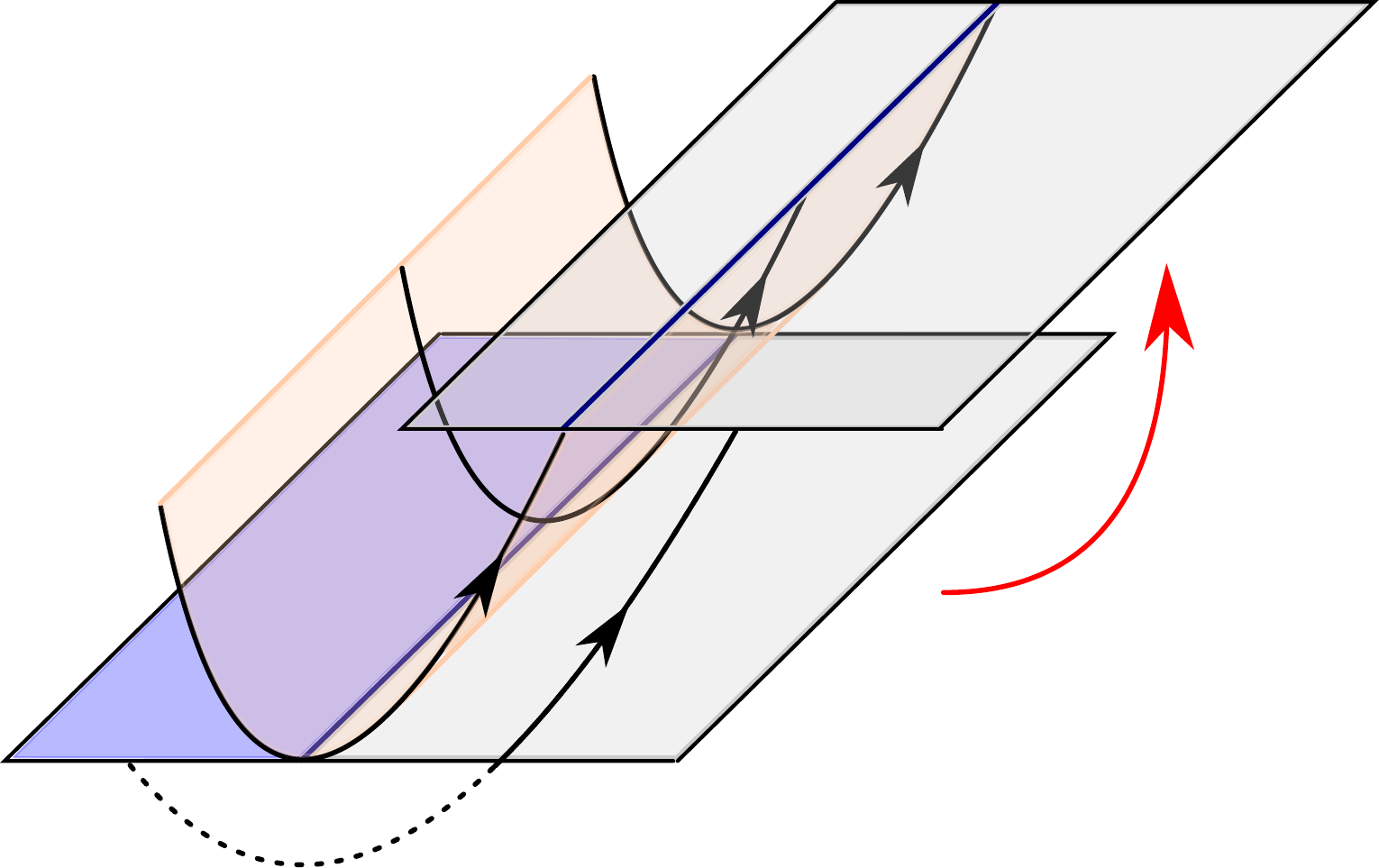}
		%								\begin{overpic}[grid,tics=5,width=6cm]{Figures/transmap.pdf}			
		\put(96,55){{\footnotesize $\tau$}}		
		\put(60,13){{\footnotesize $\s$}}		
		\put(85,25){{\footnotesize $\mathcal{T}_Z$}}	
		\put(0,20){{\footnotesize $M^+$}}	
		\put(3,10){{\footnotesize $\s^{ss}$}}
		\put(0,0){{\footnotesize $M^-$}}									
		\put(21,4){{\footnotesize $\cg_Z$}}		
	\end{overpic}
	\bigskip
	\caption{Full transition map $\mathcal{T}_Z$ for a vector field $Z$ near $Z_0$.}	\label{transmap_fig}
\end{figure}

Using the expression of the flow of $X_0$, an easy computation allows us to check that $\mathcal{T}_0:=\mathcal{T}_{Z_0}$ is given by
\begin{equation*}
\mathcal{T}_0(x,y,0)= (x,\sqrt{y^2+2\e},\e).
\end{equation*}

%Now, since $V_0\subset U_0$, an easy computation allows us to see that the expression of $\pi_1(Z_0)$ in Vishik's coordinate system is given by
%\begin{equation}
%\pi_1(Z_0)(x,y,0)= (x,\sqrt{y^2+2\e},\e).
%\end{equation}	

Finally, for each $Z=(X,Y)\in\V_0$, we use the compactness of $\gamma_Z$ to construct a finite cover of $\textrm{Im}(\gamma_Z)$ by domains of Vishik's coordinate system (see Proposition \ref{Vishik}). Thus, we see that the orbit of $X$ connecting $p\in \widetilde{V_0}$ and a point of $\tau$ is contained in $M^+$ if, and only if, $p\in \widetilde{V_0}\cap\overline{\s^c}$. Therefore, $\mathcal{T}_Z$ describes the real behavior of the trajectories of $Z$ between $\s$ and $\tau$ only in the domain 
$$\sigma_Z=\widetilde{V_0}\cap \overline{\s^c}.$$ Accordingly, we define the \textbf{transition map} of $Z$ as $T_Z=\mathcal{T}_Z\big|_{\sigma_Z}$.

Notice that $T_Z$ is a homeomorphism onto its image and $\mathcal{T}_Z$ is a natural $\Cr$ extension of $T_Z$ to $V_0$ induced by the setting of the problem. Nevertheless, $\mathcal{T}_Z$ is a non-invertible map.

\subsection{First Return Map}\label{first_sec}

Consider the coordinate system and the local transversal section $\tau$ introduced in Section \ref{trans_sec}, and recall that $Y_0$ is transverse to $\s$ at each point of $\widetilde{V_0}$ and $X_0$ is transverse to $\tau$ anywhere. From conditions \textbf{(G)}, \textbf{(T)}, and \textbf{(H)}, it follows from the Implicit Function Theorem that, for each $Z\in\V_0$ (reducing $\V_0$ if necessary), there exists a diffeomorphism $\mathcal{D}_Z: \tau\rightarrow \s$ onto its image induced by regular orbits of $Z$. In particular, denoting $\mathcal{D}_0=\mathcal{D}_{Z_0}$, we obtain
$$\mathcal{D}_0(0,\sqrt{\e},\e)=(0,0,0).$$

We define the \textbf{full first return map} $\mathcal{P}_{Z}: \widetilde{V_0}\rightarrow \s$ of $Z\in\V_0$ as
\begin{equation}
\label{firstreturnmap}
\mathcal{P}_{Z}(x,y,0)=\mathcal{D}_Z\circ\mathcal{T}_Z(x,y,0),
\end{equation}
where $\mathcal{T}_Z$ is the full transition map of $Z$ given in \eqref{transmap}. Accordingly, the \textbf{first return map} of $Z$ is defined by $P_Z=\mathcal{D}_Z\circ T_Z$, where $T_Z$ is the transition map of $Z$. 

If $p\in\sigma_Z$, then $p$ and $\mathcal{P}_Z(p)$ are connected by a trajectory of $Z$, nevertheless, if $p\in \widetilde{V_0}\setminus \sigma_Z$, then $p$ and $\mathcal{P}_Z(p)$ are related by a virtual trajectory of $Z$. It follows that $P_Z$ is a $\Cr$ homeomorphism (onto its image) which completely describes the crossing dynamics of $Z$ inside the torus $\mathcal{A}_0$.% generated by $\Gamma_0$ and $ \widetilde{V_0}$. 

Notice that both $P_Z$ and $\mathcal{P}_Z$ have a $\Cr$ dependence on $Z$. Also, $\mathcal{P}_Z$ is a non-invertible map which is a $\Cr$ extension of $P_Z$ to $\widetilde{V_0}$. In particular, the origin is a fixed point of $\mathcal{P}_0=\mathcal{P}_{Z_0}$, corresponding to the homoclinic-like loop $\Gamma_0$ of $Z_0$. 

\subsection{Fold Line Map}\label{foldline_sec}

Finally, we characterize the fold line map $\psi_0$ of $Z_0$ induced by the sliding dynamics. In addition, this map can be constructed for every $Z\in\Or$ sufficiently near $Z_0$. Consider the same notation used in Section \ref{struct_sec}.

Denote the fold line $\gamma_Z$ of $Z$ by $S_{Z}$. Since $S_{Z_0}\cap \widetilde{V_{0}}$ is composed by fold-regular singularities of $Z_0$, it follows from Lemma 24 in \cite{GT19} that, reducing $\widetilde{V_0}$ if necessary, the sliding vector field $F_{Z_0}$ is extended onto $\widetilde{V_0}$, and it is transverse to $S_{Z_0}$ at $p_0$. Define the $\Cr$ map $\mathcal{G}: \V_0\times \widetilde{V_0}\times\rn{}\rightarrow\s$ given by
$$\mathcal{G}(Z,(x,y,0),s)=Xf(\p_{F_Z}(s;x,y,0)),$$
where $Z=(X,Y)$. Since $S_{Z_0}=X_0f^{-1}(0)$, it follows that
$\mathcal{G}(Z_0,(0,0,0), 0)=X_0f(0,0,0)=0$  and $\partial_s\mathcal{G}(Z_0,(0,0,0), 0)=dX_0f(0,0,0)\cdot F_{Z_0}(0,0,0)\neq0.$

From the Implicit Function Theorem, reducing $\widetilde{V_0}$ and $\V_0$ if necessary, there exists a unique $\Cr$ function $s^*:\V_0\times\widetilde{V_0}\rightarrow\rn{}$ such that $\mathcal{G}(Z,(x,y,0),s^*(Z,(x,y,0)))=0$.

Consider the full first return map $\mathcal{P}_Z:\widetilde{V_0}\rightarrow\s$ given by \eqref{firstreturnmap}. Now, for a sufficiently small neighborhood $\widetilde{V_1}$ of $(0,0,0)$ contained in $\widetilde{V_0}$ and reducing $\V_0$ if necessary, we define the \textbf{full fold line map} $\Psi_Z: S_{Z}\cap \widetilde{V_1}\rightarrow S_{Z}\cap \widetilde{V_0}$ by
\begin{equation}\label{foldlinemap}\Psi_Z(p)=\p_{F_Z}(s^*(Z,\mathcal{P}_Z(p));\mathcal{P}_Z(p)),\end{equation}
for each $Z\in\V_0$.

In order to analyze the dynamics encoded by the full fold line map, it is convenient to restrict it to the following domain
\begin{equation}\label{folddom}
\sigma_Z^{FL}= \mathcal{P}_Z^{-1}(\mathcal{P}_Z(S_Z\cap\widetilde{V_1})\cap \s^{s}).
\end{equation}
Accordingly, we define the \textbf{ fold line map} as $\psi_Z=\Psi_Z\big|_{\sigma_Z^{FL}}$. Notice that, $(0,0,0)$ is a fixed point of $\psi_0=\psi_{Z_0}$, and $\Psi_Z$ is a $\Cr$ extension of $\psi_Z$.

\begin{remark}
	Consider a map $\mathcal{H}:\V_0\times [a_0,b_0]\rightarrow \widetilde{V_0}$ such that, for each $Z\in\V_0$, $\mathcal{H}_Z:=\mathcal{H}(Z,\cdot):[a_0,b_0]\rightarrow S_Z\cap \widetilde{V_0}$ is a diffeormorphism onto its image in such a way that,  for some $a_0<a_1<0<b_1<b_0$, $\mathcal{H}_Z\big|_{[a_1,b_1]}$ parameterizes $S_Z\cap \widetilde{V_1}$. Therefore, $\mathcal{H}_Z^{-1}\circ\Psi_{Z}\circ\mathcal{H}_Z:[a_1,b_1]\rightarrow [a_0,b_0]$
	is a family of real diffeomorphisms (onto their image) which is of class $\Cr$ on $Z$. Therefore, if $p_0$ is a hyperbolic fixed point of $\psi_0$, we can use such parameterizations to see that, reducing $\V_0$ if necessary, the full fold line map $\Psi_Z$ has a unique hyperbolic fixed point (with the same type) in $S_Z\cap \widetilde{V_0}$, for each $Z\in\V_0$.
\end{remark}

\subsection{Properties}\label{propertiesfr}

In what follows, we use the full transition map $\mathcal{P}_0$ and the full fold line map $\Psi_0$ to characterize $P_0=P_{Z_0}$ and $\psi_0=\psi_{Z_0}$. We consider the coordinate system $(x,y,z)$ at $p_0$ as in Section \ref{trans_sec}, and from now on, we identify the points $(x,y,0)\in\s$ and $(x,y,\e)\in\tau$ with $(x,y)$. Also, consider the neighborhoods $\widetilde{V_1}$ and $\V_0$ of $(0,0,0)$ and $Z_0$ given in Section \ref{foldline_sec}, respectively.

\begin{lemma}\label{fullfrm}
	Given $Z_0\in\Or$ satisfying conditions \textbf{(G)}, \textbf{(H)}, and  \textbf{(T)}, there exist real constants $\ag_{i,j},\bg_{i,j}\in \rn{}$, $i=0,1,2$ and $j=0,1$ such that the Taylor expansion of the full first return map $\mathcal{P}_0$ of $Z_0$ at the origin is given by
	\begin{equation}\label{firstreturn}
	\mathcal{P}_0(x,y)= \left(\begin{array}{c}
	\ag_{1,0} x+  \ag_{0,1}y^2+ \ag_{2,0}x^2+\ag_{1,1}xy^2+\er(x^3, x^2y^2,y^4)\vspace{0.2cm}\\
	\bg_{1,0} x+  \bg_{0,1}y^2+ \bg_{2,0}x^2+\bg_{1,1}xy^2+\er(x^3, x^2y^2,y^4)
	\end{array}\right).
	\end{equation}
	Furthermore, the following statements hold.
	\begin{enumerate}[i)]
		\item $d=\ag_{1,0}\bg_{0,1}-\ag_{0,1}\bg_{1,0}\neq 0$;
		\item $\sgn(d)=\sgn( J\mathcal{D}_0(0,\sqrt{2\e}))$, where $\mathcal{D}_0:\tau\rightarrow \s$ is the diffeomorphism induced by the flow of $Z_0$ and $J\mathcal{D}_0$ denotes the Jacobian of $\mathcal{D}_0$;
		\item If $F_{Z_0}$ is transverse to $\mathcal{P}_0(S_{Z_0}\cap\widetilde{V_1})$ at the origin, then $\ag_{1,0}\neq 0$.
	\end{enumerate}
	% $d=\ag_{1,0}\bg_{0,1}-\ag_{0,1}\bg_{1,0}\neq 0$ and $\sgn(d)=\sgn( J\mathcal{D}_0(0,\sqrt{2\e}))$, where $\mathcal{D}_0:\tau\rightarrow \s$ is the diffeomorphism induced by the flow of $Z_0$ and $J\mathcal{D}_0$ denotes the Jacobian of $\mathcal{D}_0$. 
\end{lemma}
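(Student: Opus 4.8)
The plan is to exploit the factorization $\mathcal{P}_0=\mathcal{D}_0\circ\mathcal{T}_0$ established above, in which $\mathcal{T}_0(x,y)=(x,\sqrt{y^2+2\e})$ is explicit and $\mathcal{D}_0$ is a diffeomorphism with $\mathcal{D}_0(0,\sqrt{2\e})=(0,0)$. The crucial structural observation is that the second component of $\mathcal{T}_0$, namely $g(y)=\sqrt{y^2+2\e}$, is a smooth \emph{even} function of $y$, so its Taylor expansion at $y=0$ reads $g(y)=\sqrt{2\e}+c_1y^2+\er(y^4)$ with $c_1=g''(0)/2=1/(2\sqrt{2\e})>0$ and, in particular, contains no odd powers of $y$. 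First I would Taylor-expand $\mathcal{D}_0$ to second order about $(0,\sqrt{2\e})$, writing its two components with coefficients $a_{ij},b_{ij}$, and then substitute $u=x$ and $\widetilde{w}=g(y)-\sqrt{2\e}=c_1y^2+\er(y^4)$.

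Collecting monomials, the linear term in $\widetilde{w}$ produces the $y^2$ contribution, the mixed term $u\widetilde{w}$ produces the $xy^2$ contribution, while every contribution coming from $\widetilde{w}^{2}$ or from cubic-or-higher terms of $\mathcal{D}_0$ is absorbed into $\er(x^3,x^2y^2,y^4)$; the absence of odd powers of $y$ in $g$ guarantees that no $y$ or $xy$ term ever appears. This yields precisely the form \eqref{firstreturn}, with $\ag_{1,0}=a_{10}$, $\bg_{1,0}=b_{10}$, $\ag_{0,1}=c_1a_{01}$, $\bg_{0,1}=c_1b_{01}$, and the analogous identities for the quadratic coefficients. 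From these, $d=\ag_{1,0}\bg_{0,1}-\ag_{0,1}\bg_{1,0}=c_1(a_{10}b_{01}-a_{01}b_{10})=c_1\,J\mathcal{D}_0(0,\sqrt{2\e})$. Since $\mathcal{D}_0$ is a diffeomorphism its Jacobian determinant is nonzero, and $c_1>0$; this simultaneously establishes item (i), that $d\neq0$, and item (ii), that $\sgn(d)=\sgn\big(J\mathcal{D}_0(0,\sqrt{2\e})\big)$.

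For item (iii) the key point is to identify the direction of $F_{Z_0}$ at $p_0$. Because $p_0$ is a fold point of $X_0$, we have $X_0f(p_0)=0$, so the sliding vector field degenerates to $F_{Z_0}(p_0)=X_0(p_0)=(0,1,0)$, which in the chart on $\s$ is the vector $(0,1)$. On the other hand, in Vishik's coordinates $S_{Z_0}$ is the $x$-axis $\{y=0\}$, so $\mathcal{P}_0(S_{Z_0}\cap\widetilde{V_1})$ is the curve $x\mapsto\big(\ag_{1,0}x+\er(x^2),\,\bg_{1,0}x+\er(x^2)\big)$, whose tangent at the origin is $(\ag_{1,0},\bg_{1,0})$. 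Transversality of $F_{Z_0}$ to this curve at the origin is thus equivalent to the nonvanishing of $\det\begin{pmatrix}\ag_{1,0}&0\\ \bg_{1,0}&1\end{pmatrix}=\ag_{1,0}$, which is exactly the assertion $\ag_{1,0}\neq0$.

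I expect the only genuinely delicate point to be the bookkeeping in the composition: one must verify that the remainder is honestly of order $\er(x^3,x^2y^2,y^4)$ and that no lower-order terms survive, which hinges entirely on $\mathcal{T}_0$ depending on $y$ solely through $y^2$. The observation that makes item (iii) transparent is the elementary but essential identity $F_{Z_0}(p_0)=X_0(p_0)$, valid since the fold condition forces $X_0f(p_0)=0$; this collapses the transversality determinant to the single coefficient $\ag_{1,0}$.
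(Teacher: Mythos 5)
Your proposal is correct and follows essentially the same route as the paper's proof: Taylor-expand the diffeomorphism $\mathcal{D}_0$ at $(0,\sqrt{2\e})$, compose with the explicit transition map $\mathcal{T}_0$ whose second component is even in $y$, read off the form \eqref{firstreturn} together with items (i)--(ii) from $d=c_1\,J\mathcal{D}_0(0,\sqrt{2\e})$, and obtain item (iii) by comparing $F_{Z_0}(p_0)=(0,1)$ with the tangent direction $(\ag_{1,0},\bg_{1,0})$ of $\mathcal{P}_0(S_{Z_0}\cap\widetilde{V_1})$. The only cosmetic difference is that you get $F_{Z_0}(p_0)=X_0(p_0)$ directly from the fold condition $X_0f(p_0)=0$, whereas the paper computes the sliding vector field explicitly in Vishik's coordinates; both yield the same vector and the same transversality determinant.
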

\begin{proof}
	Since $\mathcal{D}_0$ is a diffeomorphism such that $\mathcal{D}_0(\sqrt{2\e},0)=(0,0)$, it follows that,
	$$
	\mathcal{D}_0(x,y)= \left(\begin{array}{c}
	a_{1,0} x+  a_{0,1}(y-\sqrt{2\e})+ a_{2,0}x^2+a_{1,1}x(y-\sqrt{2\e})+a_{0,2} (y-\sqrt{2\e})^2+\er_3(x,y-\sqrt{2\e})\vspace{0.2cm}\\
	b_{1,0} x+  b_{0,1}(y-\sqrt{2\e})+ b_{2,0}x^2+b_{1,1}x(y-\sqrt{2\e})+b_{0,2} (y-\sqrt{2\e})^2+\er_3(x,y-\sqrt{2\e})
	\end{array}\right),
	$$
	where $a_{i,j},b_{i,k}\in \rn{}$  are constants satisfying $a_{1,0}b_{0,1}-a_{0,1}b_{1,0}\neq 0$.	Also, using the expression of $\mathcal{T}_0$ given in \eqref{transmap}, it follows that
	$$
	\mathcal{T}_0(x,y)= \left(\begin{array}{c}
	x\vspace{0.2cm}\\
	\sqrt{2\e}+ Ky^2+\er_4(y)
	\end{array}\right),
	$$	
	where $K>0$. Straightforwardly, we obtain \eqref{firstreturn} and prove items $(i)$ and $(ii)$.
	
	Finally, assume that $F_{Z_0}$ is transverse to $S_{Z_0}$ at the origin. Denoting $Y_0(x,y,z)=(f_1,f_2,f_3)$ in this coordinate system, where $f_i=f_i(x,y,z)$, $i=1,2,3$, we obtain
	$$Y_0f(x,y,z)=f_3(x,y,z).$$
	
	Recalling that $f(x,y,z)=z$ and $X_0(x,y,z)=(0,1,y)$, we have that the correspondent sliding vector field is expressed as
	$$
	\begin{array}{lcl}
	\vspace{0.2cm}F_{Z_0}(x,y)&=&\dfrac{Y_0f(x,y,0) X_0(x,y,0) -X_0f(x,y,0) Y_0(x,y,0)}{Y_0f(x,y,0)- X_0f(x,y,0)}\\
	\vspace{0.2cm} &=&\dfrac{f_3(x,y,0) (0,1,y) -y(f_1(x,y,0),f_2(x,y,0),f_3(x,y,0))}{f_3(x,y,0)- y}\\
	\vspace{0.2cm} &=&\left(\dfrac{-yf_1(x,y,0)}{f_3(x,y,0)-y}, \dfrac{f_3(x,y,0)-yf_2(x,y,0)}{f_3(x,y,0)-y}\right).
	\end{array}
	$$
	
	Since $Y_0$ is transverse to $\s$ at $p_0$, it follows that $a_0=f_3(0,0,0)\neq 0$, and consequently,
	$F_{Z_0}(0,0)=(0,1).$
	
	Now, notice that $S_{Z_0}=\left\{ (x,0);\ x\in (-\e,\e)\right\}$ and therefore \begin{equation}\label{firstiteration}\zeta_0=\mathcal{P}_0(S_{Z_0}\cap \widetilde{V_1})=\left\{ (\ag_{1,0} x+\er_2(x),\bg_{1,0} x+ \er_2(x));\ x\in (-\e,\e)\right\},\end{equation} for some $\e>0$. It follows that $T_{\textbf{0}}\zeta_0=\textrm{span}\{(\ag_{1,0},\bg_{1,0})\}$, and since $F_{Z_0}$ is transverse to $\zeta_0$ at the origin, we obtain that $\ag_{1,0}\neq 0$.
\end{proof}

\begin{remark}
	Notice that $\mathcal{P}_0(S_{Z_0}\cap\widetilde{V_1})$ coincides with the curve $\zeta_0$ given in Section \ref{main_sec}.
\end{remark}

The proof of the following lemma is straightforward and will be omitted.

\begin{lemma}\label{normalform}
	Consider the same hypotheses of Lemma \ref{fullfrm} and assume that $\ag_{1,0}\neq 0$. Then, the local change of coordinates at the origin of the plane $\s$ given by 
	$$\left\{ \begin{array}{l}
	u=x- \dfrac{\ag_{0,1}}{\ag_{1,0}}y^2,\vspace{0.1cm}\\
	v=y,
	\end{array}\right.
	$$
	brings the full first return map $\mathcal{P}_0$ into
	\begin{equation*}\label{firstreturn2}
	\overline{\mathcal{P}_0}(u,v)= \left(\begin{array}{c}
	\ag_{1,0} u \vspace{0.2cm}\\
	\bg_{1,0} u+ \dfrac{d}{\ag_{1,0}}v^2
	\end{array}\right)+u^2 A_1(u,v)+uv^2 A_2(u,v)+v^4 A_3(u,v),
	\end{equation*}	
	where $A_i(u,v)$ are bounded vector-valued functions.

	%	Given $Z_0\in\Or$ satisfying conditions \textbf{(G)}, \textbf{(H)}, and  \textbf{(T)}, there exist real constants $\ag_{i,j},\bg_{i,j}\in \rn{}$, $i=0,1,2$ and $j=0,1$ such that the Taylor expansion of the first return map $\mathcal{P}_0$ of $Z_0$ at the origin is given by
	%	\begin{equation}\label{firstreturnexp}
	%	\mathcal{P}_0(x,y)= \left(\begin{array}{c}
	%	\ag_{1,0} x+  \ag_{0,1}y^2+ \ag_{2,0}x^2+\ag_{1,1}xy^2+\er(x^3, x^2y^2,y^4)\vspace{0.2cm}\\
	%	\bg_{1,0} x+  \bg_{0,1}y^2+ \bg_{2,0}x^2+\bg_{1,1}xy^2+\er(x^3, x^2y^2,y^4)
	%	\end{array}\right).
	%	\end{equation}
	%	Moreover $\dg=\ag_{1,0}\bg_{0,1}-\ag_{0,1}\bg_{1,0}\neq 0$ and $\sgn(\dg)=\sgn( J\mathcal{D}_0(0,\sqrt{2\e}))$, where $\mathcal{D}_0:\tau\rightarrow \s$ is the diffeomorphism induced by the flow of $Z_0$ and $J\mathcal{D}_0$ denotes the Jacobian of $\mathcal{D}_0$. 
\end{lemma}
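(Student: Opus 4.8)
The plan is to read the statement as a pure conjugation computation. Writing $\Phi(x,y)=\bigl(x-\tfrac{\ag_{0,1}}{\ag_{1,0}}y^2,\,y\bigr)$ for the proposed change of coordinates, the assertion is that $\overline{\mathcal{P}_0}=\Phi\circ\mathcal{P}_0\circ\Phi^{-1}$ has the displayed form. Since $\ag_{1,0}\neq0$ by hypothesis, $\Phi$ has invertible (indeed identity) linear part at the origin, so it is a local $\Cr$ diffeomorphism and the conjugation is well defined on a possibly smaller neighborhood of the origin; thus the only content of the lemma is the algebraic expansion, which is why it can be dispatched quickly.

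First I would record the explicit inverse $\Phi^{-1}(u,v)=\bigl(u+\tfrac{\ag_{0,1}}{\ag_{1,0}}v^2,\,v\bigr)$ and substitute it into the expansion \eqref{firstreturn}; then I would re-express the resulting image in the $(u,v)$ chart, i.e.\ subtract $\tfrac{\ag_{0,1}}{\ag_{1,0}}$ times the square of the second component from the first. To keep the bookkeeping finite I would grade monomials by assigning weight $2$ to $u$ (equivalently to $x$) and weight $1$ to $v=y$, which is the weighting naturally attached to a fold. Under this grading the return map \eqref{firstreturn} contains no monomials of odd weight---it is even in $y$, a fact inherited from the square root in $\mathcal{T}_0$---so its homogeneous pieces sit in weights $2,4,6,\dots$. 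The designed role of the correction term $\tfrac{\ag_{0,1}}{\ag_{1,0}}y^2$ is exactly to absorb the weight-$2$ contribution to the first component coming jointly from $\ag_{1,0}x$ and $\ag_{0,1}y^2$; a one-line computation then shows the weight-$2$ parts of the two components become $\ag_{1,0}u$ and $\bg_{1,0}u+\tfrac{d}{\ag_{1,0}}v^2$, where $d=\ag_{1,0}\bg_{0,1}-\ag_{0,1}\bg_{1,0}$ is the nonzero quantity from Lemma \ref{fullfrm}(i).

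Finally I would collect all terms of weight $\geq 4$. Because the full conjugated map remains even in $v$, and every monomial of weight $\geq4$ that is even in $v$ lies in the module generated by $u^2$, $uv^2$ and $v^4$ over bounded functions, these terms group precisely as $u^2A_1(u,v)+uv^2A_2(u,v)+v^4A_3(u,v)$ with the $A_i$ bounded; here the correction $\tfrac{\ag_{0,1}}{\ag_{1,0}}(\,\cdot\,)^2$ contributes only at weight $\geq4$ (the squared second component starts at $u^2$) and is therefore swept into the remainder without disturbing the normal-form part. The computation is entirely routine, so the only genuine point to watch---the ``main obstacle'' such as it is---is the organizational one of verifying that each higher-order monomial, after substitution, indeed falls into one of the three allowed buckets; the even-in-$v$ symmetry together with the weight count makes this automatic.
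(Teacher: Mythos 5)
Your reading of the lemma is the intended one (the paper explicitly omits this proof as ``straightforward'', so its content is exactly the direct conjugation computation you outline), and your bookkeeping devices --- weight $2$ for $u$, weight $1$ for $v$, evenness in $v$ inherited from $\mathcal{T}_0$, and the fact that every even monomial of weight $\geq 4$ lies in the module generated by $u^2$, $uv^2$, $v^4$ --- are all sound and would dispose of the remainder. The problem is that your central ``one-line computation'' is asserted, not performed, and it fails in the direction you set it up. Write $\kappa=\ag_{0,1}/\ag_{1,0}$. With $\Phi(x,y)=(x-\kappa y^2,\,y)$ and $\overline{\mathcal{P}_0}=\Phi\circ\mathcal{P}_0\circ\Phi^{-1}$, you must substitute $x=u+\kappa v^2$, $y=v$ into the expansion of Lemma~\ref{fullfrm}; the first component then reads
\begin{equation*}
\ag_{1,0}(u+\kappa v^2)+\ag_{0,1}v^2+\cdots=\ag_{1,0}u+2\ag_{0,1}v^2+\cdots,
\end{equation*}
so the quadratic term \emph{doubles} rather than cancels (the outer subtraction of $\kappa(\cdot)^2$ applied to the second component only contributes at weight $\geq 4$ and cannot repair this). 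Likewise the $v^2$-coefficient of the second component comes out as $(\ag_{1,0}\bg_{0,1}+\ag_{0,1}\bg_{1,0})/\ag_{1,0}$, not $d/\ag_{1,0}$. Under your stated convention the computation therefore produces a normal form different from the one claimed in the lemma, and your proof, as written, proves a false identity.

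The cancellation you describe occurs for the conjugation in the opposite direction: take $\overline{\mathcal{P}_0}=\Phi^{-1}\circ\mathcal{P}_0\circ\Phi$, i.e.\ substitute $x=u-\kappa v^2$, $y=v$, so that at weight $2$ one gets $\ag_{1,0}(u-\kappa v^2)+\ag_{0,1}v^2=\ag_{1,0}u$ and $\bg_{1,0}(u-\kappa v^2)+\bg_{0,1}v^2=\bg_{1,0}u+\tfrac{d}{\ag_{1,0}}v^2$, while the correction $+\kappa(\text{second component})^2$ added back to the first slot has weight $\geq 4$. Equivalently, in your pushforward convention the transformation that works is $u=x+\kappa y^2$, $v=y$; note that the paper's own convention in Proposition~\ref{firstprop} is $\eta\circ\mathcal{P}_0\circ\eta^{-1}$, so the sign displayed in the statement of Lemma~\ref{normalform} is most naturally read as the inverse substitution (or as a sign slip), and a correct proof has to make this reconciliation explicit rather than assert that the stated sign and the stated convention are compatible. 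Once the direction is fixed, the rest of your argument --- grouping all weight-$\geq 4$ terms, including $\kappa(\cdot)^2$ and the images of $\ag_{2,0}x^2$, $\ag_{1,1}xy^2$, $\er(x^3,x^2y^2,y^4)$, into $u^2A_1+uv^2A_2+v^4A_3$ with bounded $A_i$ --- goes through verbatim.
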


Notice that, the change of coordinates exhibited in Lemma \ref{normalform} does not modify the structure of the  problem in the coordinate system $(x,y)$. In fact, the tangency set of $Z_0$ remains fixed through this change of coordinates and it is expressed as $S_{Z_0}=\{(u,0);\ u\in(-\e,\e) \}$, for some $\e>0$ sufficiently small. For the sake of simplicity, we make no distinction between the coordinates $(u,v)$ and $(x,y)$ and so $\mathcal{P}_0$ writes as 
\begin{equation}\label{firstreturn3}
\mathcal{P}_0(x,y)= \left(\begin{array}{c}
\ag x \vspace{0.2cm}\\
b x+ c y^2
\end{array}\right)+x^2 A_1(x,y)+xy^2 A_2(x,y)+y^4 A_3(x,y),
\end{equation}	
where $\ag=\ag_{1,0}$, $b=\bg_{1,0}$, $c=\dfrac{d}{\ag_{1,0}}$, and $A_i$ are bounded vector-valued functions, $i=1,2,3$.
\begin{lemma}\label{genericlemma}
	Let $Z_0\in\Lambda_1$. Consider the full fold line map $\Psi_0$ and the full first return map $\mathcal{P}_0$ of $Z_0$ given by \eqref{foldlinemap} and \eqref{firstreturn3}, respectively. The following statements hold:
	\begin{enumerate}[i)]
		\item $\ag\neq 0$, $|\ag|\neq 1$, $b\neq 0$ and $c\neq 0$;
		\item $\Psi_0(x,0)=(\ag x+\er(x^2),0)$, for $x$ small;
		\item the origin is a hyperbolic fixed point of $\mathcal{P}_0$ with real eigenvalues $0$ and $\ag$;
		\item the eigenspaces of $\mathcal{P}_0$ corresponding to the eigenvalues $0$ and $\ag$ are given by $\mathcal{E}_0=\textrm{span}\{(0,1)\}$ and $\mathcal{E}_{\ag}=\textrm{span}\{(\ag,b)\}$, respectively.
	\end{enumerate} 
\end{lemma}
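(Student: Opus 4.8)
The plan is to read all four assertions off the normal form \eqref{firstreturn3} together with the constraints forced by membership in $\Lambda_1$. First I would compute the Jacobian of $\mathcal{P}_0$ at the origin. Since every summand of the remainder $x^2A_1+xy^2A_2+y^4A_3$ in \eqref{firstreturn3} contributes only terms vanishing to first order at $(0,0)$, a direct differentiation gives
\[
D\mathcal{P}_0(0,0)=\begin{pmatrix}\ag & 0\\ b & 0\end{pmatrix}.
\]
The origin is fixed (it corresponds to $\Gamma_0$), so items $(iii)$ and $(iv)$ reduce to pure linear algebra: the characteristic polynomial is $\lambda(\lambda-\ag)$, yielding the real eigenvalues $0$ and $\ag$, and solving $D\mathcal{P}_0(0,0)v=0$ and $(D\mathcal{P}_0(0,0)-\ag I)v=0$ produces the eigenspaces $\mathcal{E}_0=\mathrm{span}\{(0,1)\}$ and $\mathcal{E}_{\ag}=\mathrm{span}\{(\ag,b)\}$. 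The vanishing eigenvalue reflects the non-invertibility of $\mathcal{P}_0$, and hyperbolicity of the fixed point amounts to $|0|\neq1$ and $|\ag|\neq1$, so everything hinges on the nonvanishing statements in $(i)$.

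For the scalar constants I would argue as follows. Since $Z_0\in\Lambda_1$, condition $(ii)$ of Definition \ref{lambda1} says $F_{Z_0}$ is transverse to $\zeta_0$ at $p_0$, and by the Remark following Lemma \ref{fullfrm} we have $\zeta_0=\mathcal{P}_0(S_{Z_0}\cap\widetilde{V_1})$; hence Lemma \ref{fullfrm}$(iii)$ yields $\ag=\ag_{1,0}\neq0$. Because $c=d/\ag$ and $d\neq0$ by Lemma \ref{fullfrm}$(i)$, it follows that $c\neq0$. To obtain $b\neq0$ I would invoke hypothesis \textbf{(T)}: the fold line is $\gamma_0=S_{Z_0}=\{(x,0)\}$ with tangent $(1,0)$ at the origin, while reading \eqref{firstreturn3} at $y=0$ shows $\zeta_0$ is parameterized by $x\mapsto(\ag x+\er(x^2),b x+\er(x^2))$, so its tangent there is $(\ag,b)$. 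Transversality $\zeta_0\pitchfork\gamma_0$ at $p_0$ forces $(1,0)$ and $(\ag,b)$ to be linearly independent, i.e. $b\neq0$.

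It remains to establish $(ii)$ and, with it, $|\ag|\neq1$. By \eqref{foldlinemap}, $\Psi_0(p)=\p_{F_{Z_0}}(s^*(Z_0,\mathcal{P}_0(p));\mathcal{P}_0(p))$, and the defining equation $\mathcal{G}=X_0f\circ\p_{F_{Z_0}}=0$ means the landing point lies on $S_{Z_0}=X_0f^{-1}(0)=\{y=0\}$; thus the second coordinate of $\Psi_0(x,0)$ is automatically $0$. For the first coordinate I would expand along $y=0$: here $\mathcal{P}_0(x,0)=(\ag x+\er(x^2),b x+\er(x^2))$, while $s^*(Z_0,(0,0))=0$ forces $s^*(Z_0,\mathcal{P}_0(x,0))=\er(x)$. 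Using $\p_{F_{Z_0}}(s;q)=q+sF_{Z_0}(q)+\er(s^2)$ together with the value $F_{Z_0}(0,0)=(0,1)$ computed in the proof of Lemma \ref{fullfrm} (whose first component vanishes at the origin), the sliding correction to the $x$-coordinate is of order $s^*\cdot\er(x)=\er(x^2)$. Hence $\Psi_0(x,0)=(\ag x+\er(x^2),0)$, which is $(ii)$. Identifying $S_{Z_0}$ with $\R$ via the $x$-coordinate gives $\psi_0'(p_0)=\ag$, so the hyperbolicity of the fixed point $p_0$ of $\psi_0$ demanded by condition $(iii)$ of Definition \ref{lambda1} reads exactly $|\ag|\neq1$, completing $(i)$.

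The routine parts are the Jacobian computation and the eigenspace algebra; the only step requiring genuine care is $(ii)$, where I must control the sliding flow. The main obstacle is verifying that flowing along $F_{Z_0}$ from $\mathcal{P}_0(x,0)$ back to the fold line perturbs the $x$-coordinate only at order $\er(x^2)$. This is precisely what the Vishik normalization and the consequent value $F_{Z_0}(0,0)=(0,1)$ guarantee: the $x$-component of $F_{Z_0}$ vanishes at the origin and the transit time $s^*$ is itself $\er(x)$, so their product is quadratically small.
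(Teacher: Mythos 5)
Your proposal is correct and follows essentially the same route as the paper's proof: items $(iii)$--$(iv)$ by linear algebra on the Jacobian from \eqref{firstreturn3}, $\ag\neq0$ and $c\neq0$ from Lemma \ref{fullfrm}, $b\neq0$ from the transversality hypothesis \textbf{(T)} applied to the tangent vectors $(1,0)$ and $(\ag,b)$, item $(ii)$ by expanding the sliding flow around $F_{Z_0}(0,0)=(0,1)$ with transit time of order $\er(x)$, and finally $|\ag|\neq1$ from the hyperbolicity of the fixed point of $\psi_0$ required by Definition \ref{lambda1}. The only cosmetic difference is that the paper solves explicitly for the hitting time $t^*(x,y)=-y+\er_2(x,y)$ via the Implicit Function Theorem, whereas you reach the same $\er(x^2)$ correction by order estimates on $s^*$ and on the first component of $F_{Z_0}$.
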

\begin{proof}
	First, notice that items $(iii)$ and $(iv)$ follows straightly from item $(i)$ and the expression of $\mathcal{P}_0$ given in \eqref{firstreturn3}. Now, we prove items $(i)$ and $(ii)$. Since $Z_0\in\Lambda_1$, it follows from Lemma \ref{fullfrm} that $\ag\neq 0$ and $c\neq 0$.
	
	From \eqref{firstiteration} (with $\ag_{1,0}=\ag$ and $\bg_{1,0}=b$), we deduce that $T_{\textbf{0}}\gamma_0=\textrm{span}\{(1,0)\}$ and $T_{\textbf{0}}\zeta_0=\textrm{span}\{(\ag,b)\}$, where $\gamma_0=S_{Z_0}\cap\widetilde{V_1}$ and $\zeta_0=\mathcal{P}_0(\gamma_0)$. From hypothesis \textbf{(T)}, we have that $\gamma_0\pitchfork\zeta_0$ at the origin. It implies that the vectors $(1,0)$ and $(\ag,b)$ are linearly independent. Hence, $b\neq 0$.
	
	Now, from the computations done in the proof of Lemma \ref{fullfrm}, we derive that 
	$$F_{Z_0}(x,y)= (0,1)+(F_1,F_2),$$
	where $F_1,F_2=\er_1(x,y)$. Denoting $\p_{F_{Z_0}}=(\p_1,\p_2)$, we have that:
	\begin{equation*}
	\left\{
	\begin{array}{l}
	\p_1(t;x,y)= x+ F_1(x,y)t +\er_2(t),\\
	\p_2(t;x,y)=y +(1+F2(x,y))t +\er_2(t),
	\end{array}
	\right.
	\end{equation*}
	for $t,x,y$ small enough.
	
	Now, $\p_2(0;0,0)=0$ and $\partial_t \p_2(0;0,0)=1$. Thus, we use the Implicit Function Theorem to obtain a unique $\Cr$ function $t^*(x,y)$ such that $t^*(0,0)=0$ and $\p_2(t^*(x,y);x,y)=0$, for $(x,y)$ small enough, with $t^*(0,0)=0$. Also, we have that $\partial_x t^*(0,0)=0$ and $\partial_y t^*(0,0)=-1$. Thus,
	$$t^*(x,y)=-y+\er_2(x,y).$$
	
	Notice that, $\gamma_0=\{(x,0);\ x\in(-\e,\e)\}$, for $\e>0$ sufficiently small. Therefore, the full fold line map $\Psi_0:\gamma_0\rightarrow\gamma_0$ writes as
	$$\Psi_{0}(x,0)=(\p_1(t^*(\mathcal{P}_0(x,0));\mathcal{P}_0(x,0)),0).$$
	Hence, it is straightforward to check that
	\begin{equation*}\label{expansionfoldline}
	\Psi_0(x,0)= (\ag x+\er_2(x),0).
	\end{equation*}
	
	Since $Z_0\in\Lambda_1$, we conclude that the full fold line map $\Psi_0$ of $Z_0$ has a hyperbolic fixed point at the origin.  Therefore $|\ag|\neq 1$. 
\end{proof}

%\begin{proof}
%In the coordinate system adopted, we have that $S_{Z_0}$ is a subset of the $x$-axis containing the origin. Now, if we restrict ourselves to $\s=\{z=0\}$, we have that $T_{0}S_{Z_0}=\textrm{span}\{(1,0)\}$	
%	
%$\bg\neq 0$.
%
%Notice that $T_{0}S_{Z_0}=\textrm{span}\{(1,0)\}$ and $T_{0}S^1_{Z_0}=\textrm{span}\{(\ag,\bg)\}$. Now, from condition \textbf{(GH1)}, we have that $(1,0)$ and $(\ag,\bg)$ must be linearly independents. Thus, $\bg\neq 0$.
%\end{proof}

\begin{remark}
	Notice that the curve $\zeta_0$ is tangent to the eigenspace $\mathcal{E}_{\ag}$ at the origin. So, it is an intrinsic degeneracy of this problem which can not be avoided. 
\end{remark}

Using Lemma \ref{genericlemma}, we can apply some near-identity transformations to express the map $\mathcal{P}_0$ given by \eqref{firstreturn3}
in a more accurate normal form.

\begin{prop}\label{firstprop}
	There exists a change of coordinates $\eta: (\rn{2},0)\rightarrow (\rn{2},0)$ such that 
	\begin{equation}\label{firstreturntermos}
	\widetilde{\mathcal{P}_0}(x,y)=\eta\circ\mathcal{P}_0\circ\eta^{-1}(x,y)= \left(\begin{array}{c}
	\ag x-c\ag y^2+c x^2+\er_3(x,y)\vspace{0.2cm}\\
	x
	\end{array}\right).
	\end{equation}	
	In addition, $\widetilde{\mathcal{P}_0}$ is symmetric with respect to an involution $\mathcal{I}$ such that
	$$\textrm{Fix}(\mathcal{I})=\left\{(x,y);\ y=\dfrac{B}{b} x^2+\er_3(x)\right\},$$
	where \begin{equation}\label{B}B=\dfrac{b\partial^2_x \pi_1\circ\mathcal{P}_0(0,0)+ \ag(\ag-1)\partial^2_x \pi_2\circ\mathcal{P}_0(0,0)}{\ag^3(\ag-1)}.\end{equation}
\end{prop}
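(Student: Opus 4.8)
The plan is to put the map $\mathcal{P}_0$ given in \eqref{firstreturn3} into the stated normal form in two stages: first a linear change of coordinates that simultaneously diagonalizes the linear part and realizes the second component as the shift $y\mapsto x$, and then a sequence of near-identity (polynomial) transformations that remove or normalize the quadratic terms. By Lemma \ref{genericlemma}, the linear part of $\mathcal{P}_0$ is $\begin{pmatrix}\ag & 0\\ b & 0\end{pmatrix}$ with eigenvalues $\ag$ and $0$ and eigenspaces $\mathcal{E}_\ag=\mathrm{span}\{(\ag,b)\}$ and $\mathcal{E}_0=\mathrm{span}\{(0,1)\}$. The key structural observation is that this matrix is conjugate to the companion-type matrix of the desired form, whose second coordinate reads off the first via $y$. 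Concretely, I would choose a linear map sending the coordinate axes to the eigendirections in such a way that the image map has second component exactly $x$; the nonzero eigenvalue $\ag$ then appears as the coefficient of $x$ in the first component, matching \eqref{firstreturntermos}. The hypotheses $\ag\neq 0$, $b\neq 0$, $c\neq 0$, $|\ag|\neq 1$ from Lemma \ref{genericlemma} guarantee the transformations are well-defined and that the normalizing denominators (such as those appearing in \eqref{B}) do not vanish.

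Next I would normalize the quadratic part. Writing a general near-identity change $\eta(x,y)=(x,y)+Q(x,y)$ with $Q$ homogeneous quadratic, the conjugation $\eta\circ\mathcal{P}_0\circ\eta^{-1}$ modifies the quadratic coefficients of $\mathcal{P}_0$ by the homological operator associated to the linear part $L=\begin{pmatrix}\ag & 0\\ b& 0\end{pmatrix}$. Because one eigenvalue is $0$, the homological equation is degenerate and not every quadratic monomial can be eliminated; this is exactly why \eqref{firstreturntermos} retains the resonant terms $-c\ag y^2 + c x^2$ rather than being purely linear plus $\mathcal{O}_3$. I would compute the action of $L$ on the space of quadratic vector fields, identify the resonant monomials (those in the kernel/cokernel of the homological operator, governed by the eigenvalue ratios relative to $0$ and $\ag$), and solve for $Q$ to kill all the non-resonant quadratic terms. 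The surviving terms should reduce, after using $c=d/\ag_{1,0}$ and tracking the coefficients through the linear change, to precisely $-c\ag y^2+cx^2$. The bookkeeping here is routine but tedious, and I would present only the resulting coefficient identities rather than the full expansion.

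For the second assertion, the reversibility, I would look for an involution $\mathcal{I}$ (that is, $\mathcal{I}\circ\mathcal{I}=\mathrm{id}$) satisfying $\widetilde{\mathcal{P}_0}\circ\mathcal{I}=\mathcal{I}\circ\widetilde{\mathcal{P}_0}^{-1}$, equivalently $\widetilde{\mathcal{P}_0}=\mathcal{I}\circ\widetilde{\mathcal{P}_0}^{-1}\circ\mathcal{I}$. The natural candidate exploits the companion structure: since $\widetilde{\mathcal{P}_0}$ sends $(x,y)\mapsto(\widetilde{\pi}_1(x,y),x)$, the map that swaps the roles of the two coordinates up to the dynamics is a good seed for $\mathcal{I}$. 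I would posit $\mathcal{I}$ as a near-identity involution of the form $(x,y)\mapsto(y,x)$ corrected by higher-order terms, impose the symmetry relation order by order, and solve the resulting equations for the correction terms. The fixed-point set $\mathrm{Fix}(\mathcal{I})$ is then computed by solving $\mathcal{I}(x,y)=(x,y)$; the stated leading form $y=\frac{B}{b}x^2+\mathcal{O}_3(x)$ with $B$ given by \eqref{B} should emerge from matching the quadratic coefficients, where the denominator $\ag^3(\ag-1)$ reflects the homological denominators and is nonzero precisely because $\ag\neq 0$ and $|\ag|\neq 1$.

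The main obstacle I anticipate is the degeneracy caused by the zero eigenvalue: the homological operator is not invertible, so I must be careful to separate which quadratic terms are genuinely normalizable and which are resonant, and to verify that the resonant part collapses to the clean expression $-c\ag y^2+cx^2$ claimed in \eqref{firstreturntermos}. A secondary difficulty is ensuring that the involution $\mathcal{I}$ actually exists as a convergent (or formal $\Cr$) near-identity map and that its fixed-point locus has the asserted second-order expansion; this requires the reversibility of $\widetilde{\mathcal{P}_0}$, which is itself a nontrivial consequence of the specific normal form and must be established rather than assumed. Verifying the explicit constant $B$ in \eqref{B} is the most computation-heavy step, and I would defer its detailed derivation, presenting only the structure of the argument and the final matching of coefficients.
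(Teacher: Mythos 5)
Your proposal breaks down at two points, one in each half of the statement. For the symmetry claim, you interpret ``symmetric with respect to an involution'' as reversibility, $\widetilde{\mathcal{P}_0}\circ\mathcal{I}=\mathcal{I}\circ\widetilde{\mathcal{P}_0}^{-1}$, and propose the swap $(x,y)\mapsto(y,x)$ as the seed for $\mathcal{I}$. But $\mathcal{P}_0$ is \emph{not invertible}: its linearization at the origin has eigenvalues $\ag$ and $0$ (Lemma \ref{genericlemma}), and it is a two-to-one fold map, so $\widetilde{\mathcal{P}_0}^{-1}$ does not exist and the reversibility relation is meaningless. The symmetry intended here is invariance under precomposition, $\widetilde{\mathcal{P}_0}\circ\mathcal{I}=\widetilde{\mathcal{P}_0}$, and it does not need to be established order by order: it is inherited, exactly, from the evenness of the full transition map $\mathcal{T}_0(x,y)=(x,\sqrt{y^2+2\e})$ in $y$, which gives $\mathcal{P}_0(x,-y)=\mathcal{P}_0(x,y)$. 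Hence $\mathcal{I}=\eta\circ\mathcal{I}_0\circ\eta^{-1}$ with $\mathcal{I}_0(x,y)=(x,-y)$ is automatically an involution satisfying the required relation, and $\textrm{Fix}(\mathcal{I})=\eta(S_{Z_0})$ is simply the image of the fold line $\{y=0\}$ under the coordinate change; computing that image is what produces the parabola $y=\frac{B}{b}x^2+\er_3(x)$. This is exactly what the paper does: an explicit quadratic change $\eta_1$ (whose coefficient $B$ is the one in \eqref{B}) conjugates $\mathcal{I}_0$ into $\mathcal{I}_1(x,y)=(x,-y-2Bx^2)$, and a second change $\eta_2$ finishes the job.

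For the normal form itself, your homological analysis identifies the wrong surviving terms. In eigencoordinates for $L=\mathrm{diag}(\ag,0)$, the operator $Q\mapsto LQ-Q\circ L$ satisfies $Q\circ L(x,y)=Q(\ag x,0)$, which retains only $x^2$ terms; consequently \emph{every} quadratic monomial in the first component is non-resonant (using $\ag\neq 0,1$), while the $xy$ and $y^2$ monomials in the \emph{second} component are resonant. Poincar\'e--Dulac therefore leads to a map of the form $(\ag x+\er_3,\ \beta xy+\gamma y^2+\er_3)$: the terms $-c\ag y^2+cx^2$ that you claim survive as resonant would in fact be removed by your own procedure, and the second component would keep resonant quadratic terms, so you land on a genuinely different normal form than \eqref{firstreturntermos}. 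Moreover, an order-by-order homological scheme can never make the second component \emph{exactly} equal to $x$, free of remainders. The companion form in the statement is obtained not by resonance bookkeeping but by one structural, non-homological step: after the preliminary change $\eta_1$, the paper sets $\eta_2(x,y)=\bigl(bx+G_2(x,y)+cy^2,\ y\bigr)$, i.e.\ it takes the second component of the partially normalized map as the new first coordinate. The second component of the conjugated map then equals $x$ identically, and the quadratic terms $\ag x-c\ag y^2+cx^2$ in the first component are forced by this structure-adapted choice together with the fold symmetry; they are not selected by any resonance condition relative to the eigenvalues $\ag$ and $0$.
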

\begin{proof}
	First, we consider the change of coordinates 
	$$\eta_1(x,y)=\left(\begin{array}{c}
	x-A x^2+\er_3(x,y)\\
	y-Bx^2+ \er_3(x,y)
	\end{array}\right),$$
	such that
	$$\eta_1^{-1}(x,y)=\left(\begin{array}{c}
	x+A x^2\\
	y+Bx^2
	\end{array}\right),$$
	with $B$ given by \eqref{B} and $A=\partial^2_x \pi_1\circ\mathcal{P}_0(0,0)(\ag(\ag-1))^{-1}$. Thus, using that $\mathcal{P}_0$ is given by \eqref{firstreturn3}, we obtain
	$$\eta_1\circ\mathcal{P}_0\circ\eta_1^{-1}(x,y)=\left(\begin{array}{c}
	\ag x +G_1(x,y)\\
	bx+c y^2+G_2(x,y)
	\end{array}
	\right),$$
	where $G_1(x,y)=\er_3(x,y)$ and  $G_2(x,y)=\er_3(x,y)$, and $\eta_1\circ\mathcal{P}_0\circ\eta_1^{-1}$ is symmetric with respect to the symmetry $\mathcal{I}_1(x,y)=(x,-y-2B x^2)$, which has the following set of fixed points
	$$\textrm{Fix}(\mathcal{I}_1)=\left\{(x,y);\ y=B x^2\right\}.$$
	
	Now, considering the change of coordinates
	$$\eta_2(x,y)=\left(\begin{array}{c}
	bx +G_2(x,y)+cy^2\\
	y
	\end{array}\right),$$
	and taking $\eta=\eta_2\circ\eta_1$, the proof follows directly.
\end{proof}	

\begin{remark}
	Notice that the change of coordinates $\eta$ provided by Proposition \ref{firstreturn} carries the fold line $S_{Z_0}$ of $Z_0$ onto the set $\textrm{Fix}(\mathcal{I})$.
\end{remark}

The next result follows straightly from Lemma \ref{genericlemma} and the Stable Manifold Theorem for $\Cr$ maps (see Theorem $10.1$ in \cite{R}).

\begin{prop} \label{variedades}
	Let $Z_0\in\Lambda_1$, and consider the non-invertible full first return map $\mathcal{P}_0$ of $Z_0$ given by \eqref{firstreturn3}. Therefore, $\mathcal{P}_0$ has a local stable invariant manifold $W^s_{0}$ at the origin tangent to $\mathcal{E}_0$ and either one of the following statements hold.
	\begin{enumerate}
		\item If $|\ag|<1$, then $\mathcal{P}_0$ has a fixed point of nodal type at the origin and it has a local stable invariant manifold $W^s_{\ag}$ at the origin tangent to $\mathcal{E}_{\ag}$ (see Figure \ref{first_return_fig} - $(ii)$ and $(iv)$). 
		\item If $|\ag|>1$, then $\mathcal{P}_0$ has a fixed point of saddle type at the origin and it has a local unstable invariant manifold $W^u_{\ag}$ at the origin tangent to $\mathcal{E}_{\ag}$ (see Figure \ref{first_return_fig} - $ (i)$ and $(iii)$). 		
	\end{enumerate} 
\end{prop}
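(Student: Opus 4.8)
The plan is to apply the Stable Manifold Theorem for $\Cr$ maps directly to $\mathcal{P}_0$, treating the eigenspace decomposition established in Lemma \ref{genericlemma} as the input data. From Lemma \ref{genericlemma}$(iii)$--$(iv)$, the origin is a fixed point of $\mathcal{P}_0$ with eigenvalues $0$ and $\ag$ (with $|\ag|\neq 1$), and the associated eigenspaces are $\mathcal{E}_0=\textrm{span}\{(0,1)\}$ and $\mathcal{E}_{\ag}=\textrm{span}\{(\ag,b)\}$. Since $\mathcal{P}_0$ is a $\Cr$ map (not necessarily invertible), the correct tool is the version of the invariant manifold theorem for maps that does not require invertibility, namely Theorem $10.1$ in \cite{R}, which produces local invariant manifolds tangent to the invariant subspaces determined by the spectral splitting of $D\mathcal{P}_0(\mathbf{0})$.

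First I would record that the eigenvalue $0$ always lies strictly inside the unit disk, so the Stable Manifold Theorem yields a local stable invariant manifold $W^s_0$ tangent to $\mathcal{E}_0$ regardless of the value of $\ag$; this accounts for the uniform conclusion in the statement. Then I would split into the two hyperbolicity regimes. In the case $|\ag|<1$, both eigenvalues lie inside the unit disk, so the origin is an attracting (nodal) fixed point and the full local invariant manifold is two-dimensional; in particular there is a second local stable manifold $W^s_{\ag}$ tangent to $\mathcal{E}_{\ag}$, corresponding to the weaker contraction rate $|\ag|$. In the case $|\ag|>1$, the eigenvalue $\ag$ lies outside the unit disk while $0$ lies inside, so the origin is a saddle; the Stable Manifold Theorem (applied to the expanding direction, or via the unstable-manifold statement for $\Cr$ maps) produces a local unstable invariant manifold $W^u_{\ag}$ tangent to $\mathcal{E}_{\ag}$. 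In both regimes the tangency of the invariant manifolds to the respective eigenspaces is part of the conclusion of the cited theorem.

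The one genuine subtlety, and the step I would treat most carefully, is the noninvertibility of $\mathcal{P}_0$: because one eigenvalue is exactly $0$, the differential $D\mathcal{P}_0(\mathbf{0})$ is singular, and the standard Hadamard--Perron statements phrased for diffeomorphisms do not apply verbatim. Here I would invoke the fact that Theorem $10.1$ in \cite{R} is formulated precisely for $\Cr$ maps with a possibly noninvertible hyperbolic fixed point, so that the existence of the stable manifold associated to the contracting part of the spectrum (including the eigenvalue $0$) follows without modification; the unstable manifold in the saddle case is obtained for the genuinely expanding direction $\mathcal{E}_{\ag}$, which is where invertibility is not needed since one only iterates forward along the contracting directions and uses the graph transform on the expanding complement.

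Finally I would note that the normal form \eqref{firstreturn3} for $\mathcal{P}_0$ makes the hyperbolic splitting explicit and could, if desired, be used to exhibit the invariant manifolds as graphs $W^s_0=\{x=\er(y^2)\}$ and $W^{s/u}_{\ag}=\{\,y=\tfrac{b}{\ag}x+\er(x^2)\,\}$ over the corresponding eigendirections, matching the pictures in Figure \ref{first_return_fig}. Since the statement only asserts existence and tangency, the clean route is to cite the Stable Manifold Theorem directly, reducing the proof to verifying the hyperbolicity hypotheses already furnished by Lemma \ref{genericlemma}; no further computation beyond that verification is required.
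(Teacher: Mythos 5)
Your proposal is correct and takes exactly the route the paper does: the paper's entire proof of Proposition \ref{variedades} is the single remark that it ``follows straightly from Lemma \ref{genericlemma} and the Stable Manifold Theorem for $\Cr$ maps (see Theorem $10.1$ in \cite{R})'', which is precisely your reduction to verifying the hyperbolicity data of Lemma \ref{genericlemma} and invoking the noninvertible-map version of the invariant manifold theorem. Your additional discussion of the singular differential (eigenvalue $0$) and of the graph representations of $W^s_0$ and $W^{s/u}_{\ag}$ is a sound elaboration of details the paper leaves implicit.
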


\begin{figure}[h!]
	\centering
	\bigskip
	\begin{overpic}[width=8cm]{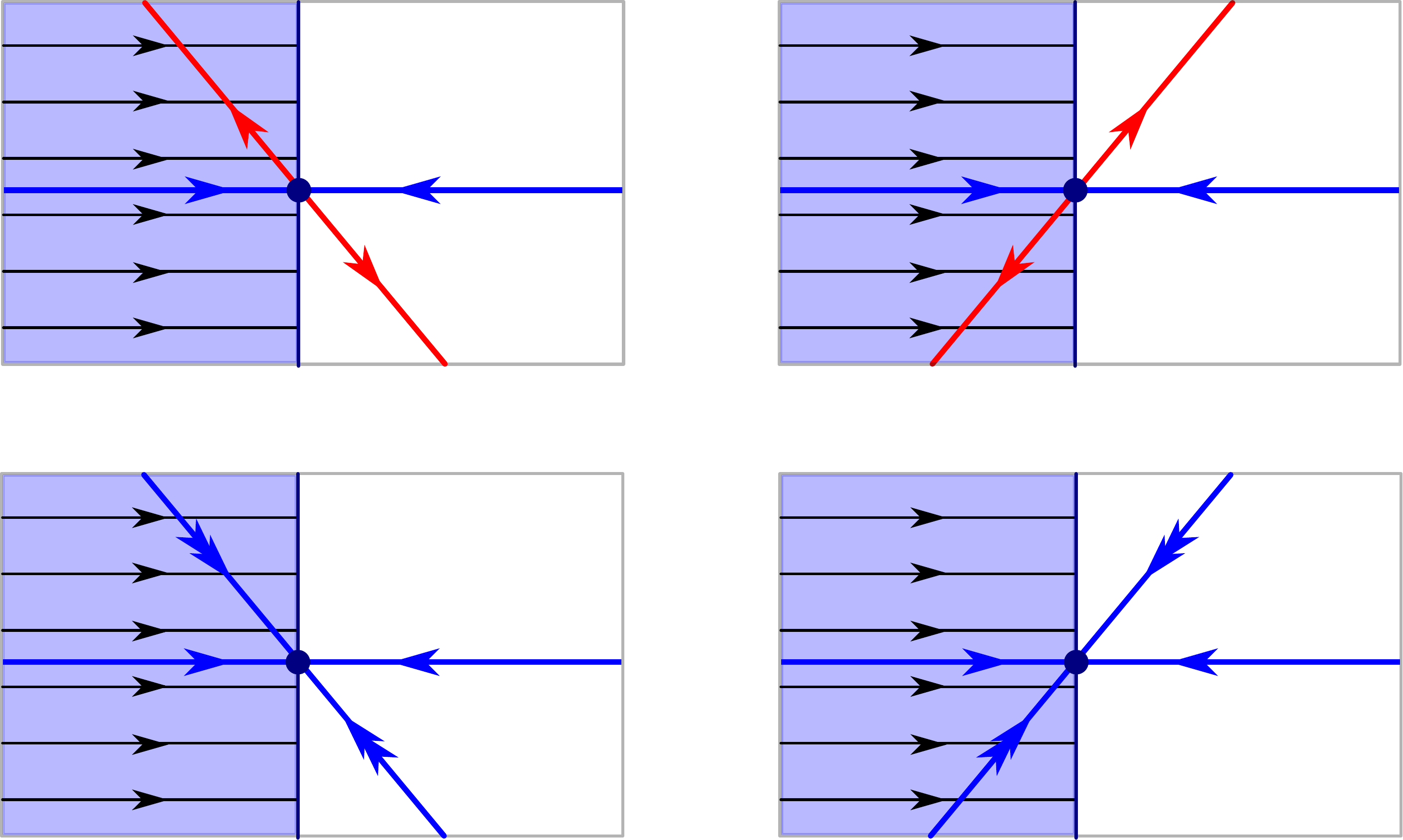}
		%									\begin{overpic}[grid,tics=5,width=8cm]{Figures/firstreturn.pdf}			
		\put(2,57){{\scriptsize $\s^s$}}		
		\put(23,55){{\scriptsize $S_{Z_0}$}}		
		\put(33,48){{\scriptsize $W_0^s$}}	
		\put(22,60){{\scriptsize $x$}}	
		\put(47,46){{\scriptsize $y$}}			
		\put(10,62){{\scriptsize $W_\ag^*(*=u,s)$}}			
		\put(22,48){{\footnotesize $p_0$}}	
		\put(40,55){{\scriptsize $\s^c$}}									
		\put(22,30){{\footnotesize $(i)$}}		
		\put(22,-5){{\footnotesize $(ii)$}}	
		\put(78,30){{\footnotesize $(iii)$}}		
		\put(78,-5){{\footnotesize $(iv)$}}					
	\end{overpic}
	\bigskip
	\caption{Configurations of the local invariant manifolds of $\mathcal{P}_0$ at $p_0$ for $b<0$, $(i)$ $\ag>1$, $(ii)$ $0<\ag<1$, $(iii)$ $\ag<-1$ and $(iv)$ $-1<\ag<0$. If $b>0$, then $(i)$ is switched by $(iii)$ such as $(ii)$ and $(iv)$.}	\label{first_return_fig}
\end{figure} 	

Finally, we characterize the classes $\Lambda_1^C$ and $\Lambda_1^M$ of $\Lambda_1$ and the hypotheses $(N)$ and $(S)$ introduced in Section \ref{main_sec}, which generate four possible types of quasi-generic loop $\Gamma_0$ passing through a fold-regular singularity $p_0$ of $Z_0\in\Lambda_1$.

\begin{prop}\label{classes}
	Let $Z_0\in\Lambda_1$, and consider the full fold line map $\Psi_0$ of $Z_0$ given by \eqref{foldlinemap}. The following statements hold:
	\begin{enumerate}[i)]
		\item $Z_0\in \Lambda_1^C$ and satisfies $(S)$ if, and only if, $\ag>1$; 
		\item $Z_0\in \Lambda_1^C$ and satisfies $(N)$ if, and only if, $0<\ag<1$;
		\item $Z_0\in \Lambda_1^M$ and satisfies $(S)$ if, and only if, $\ag<-1$;
		\item $Z_0\in \Lambda_1^M$ and satisfies $(N)$ if, and only if, $-1<\ag<0$.					
	\end{enumerate}
\end{prop}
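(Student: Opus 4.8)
The plan is to reduce all four equivalences to the single one-dimensional germ obtained by restricting the full fold line map $\Psi_0$ to the fold line $S_{Z_0}$. Recall from Section \ref{trans_sec} that in the chosen coordinates the fold line is $S_{Z_0}=\{(x,0);\ x\in(-\e,\e)\}$, with the fixed point $p_0$ at the origin, and that the two connected components $C_\gamma^1$ and $C_\gamma^2$ of $\gamma_0\setminus\{p_0\}$ are precisely the arcs $\{x>0\}$ and $\{x<0\}$. By Lemma \ref{genericlemma}$(ii)$, the restriction of $\Psi_0$ to $S_{Z_0}$ is, in the $x$-variable, the germ
\[
\phi(x)=\ag x+\er(x^2)=\ag x\bigl(1+\er(x)\bigr),
\]
whose linear part at the origin is $\phi'(0)=\ag$. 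Since $Z_0\in\Lambda_1$, Lemma \ref{genericlemma}$(i)$ guarantees $\ag\neq 0$ and $|\ag|\neq 1$, so $\phi$ is a hyperbolic one-dimensional germ and the whole analysis is governed by the position of $\ag$ relative to $-1,0,1$.

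First I would read off which component maps where. Because $\phi(x)=\ag x(1+\er(x))$, for $|x|$ small enough one has $\sgn(\phi(x))=\sgn(\ag)\,\sgn(x)$. Hence, if $\ag>0$ then $\phi$ maps $\{x>0\}$ into $\{x>0\}$ and $\{x<0\}$ into $\{x<0\}$, so $\psi_0$ preserves the components $C_\gamma^1,C_\gamma^2$, condition \textbf{(Cyl)} holds and $Z_0\in\Lambda_1^C$; whereas if $\ag<0$ the signs are swapped, $\psi_0$ exchanges $C_\gamma^1$ and $C_\gamma^2$, condition \textbf{(Mob)} holds and $Z_0\in\Lambda_1^M$. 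Next I would read off the stability: the hyperbolic fixed point $0$ of $\phi$ is attractive exactly when $|\phi'(0)|=|\ag|<1$ (condition \textbf{(N)}) and repulsive exactly when $|\ag|>1$ (condition \textbf{(S)}).

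Combining the two dichotomies then yields the four claimed equivalences: $Z_0\in\Lambda_1^C$ with \textbf{(S)} corresponds to $\ag>0$ and $|\ag|>1$, i.e.\ $\ag>1$, giving $(i)$; $\Lambda_1^C$ with \textbf{(N)} to $\ag>0$ and $|\ag|<1$, i.e.\ $0<\ag<1$, giving $(ii)$; $\Lambda_1^M$ with \textbf{(S)} to $\ag<0$ and $|\ag|>1$, i.e.\ $\ag<-1$, giving $(iii)$; and $\Lambda_1^M$ with \textbf{(N)} to $\ag<0$ and $|\ag|<1$, i.e.\ $-1<\ag<0$, giving $(iv)$.

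I do not expect a serious obstacle, since Lemma \ref{genericlemma} has already done the heavy lifting of computing the linear coefficient $\ag$ of the fold line map. The only point requiring a little care is to note that the restriction of the full fold line map $\Psi_0$ to $S_{Z_0}$ is, by its very construction in \eqref{foldlinemap}, exactly the map $\psi_0:\gamma_0\to\gamma_0$ appearing in conditions \textbf{(Cyl)}, \textbf{(Mob)}, \textbf{(N)}, \textbf{(S)} (the virtual sliding orbits invoked in Section \ref{main_sec} being precisely those encoded by the $\Cr$ extensions $\mathcal{P}_0$ and $\p_{F_{Z_0}}$); hence reading both the preservation/exchange of components and the stability directly off the germ $\phi$ is legitimate. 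One merely shrinks $\widetilde{V_1}$ so that the remainder $\er(x^2)$ never overturns the sign of the leading term $\ag x$.
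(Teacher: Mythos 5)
Your proposal is correct and takes essentially the same route as the paper's proof: both rest on Lemma~\ref{genericlemma}, reading the cylinder/M\"obius dichotomy ($\Lambda_1^C$ versus $\Lambda_1^M$) off the sign of $\ag$ via component preservation of $\Psi_0\big|_{S_{Z_0}}(x,0)=(\ag x+\er(x^2),0)$, and the $(N)$/$(S)$ dichotomy off $|\ag|$ versus $1$. The only cosmetic difference is that the paper closes the stability step by citing Proposition~\ref{variedades}, whereas you argue directly with the one-dimensional hyperbolic germ $\phi(x)=\ag x+\er(x^2)$; the content is identical.
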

\begin{proof}
	From Lemma \ref{genericlemma}, the full fold line map $\Psi_0$ of $Z_0$ writes as $\Psi_0(x,0)=(\ag x+\er(x^2),0)$. In this case, the map $\Psi_0$ preserves the connected components $(-\e,0)\times\{0\}$ and $(0,\e)\times\{0\}$ of $S_{Z_0}$ if, and only if, $\ag>0$. The result follows from Proposition \ref{variedades}.
\end{proof}	

\begin{remark}
	Notice that, the geometry of this problem allows us to see that the first return map $P_0$ preserves the orientation of the $y$-axis, nevertheless the orientation of the $x$-axis is reversed if $\ag<0$, and it is preserved if $\ag>0$. Therefore, $P_0$ preserves orientation if, and only if, $Z_0\in\Lambda_1^C$. 
	
	Since the transition map $T_0$ does not provide any changes in the orientation of $\overline{\s^c}$, it follows that $P_0=\mathcal{D}_0\circ T_0$ preserves orientation if, and only if, $\mathcal{D}_0$ preserves orientation. Hence, if $Z_0\in\Lambda_1^C$, it follows from \eqref{firstreturn}, \eqref{firstreturn3}, and Proposition \ref{classes} that $c>0$.
\end{remark}
%	\textcolor{red}{COLOCAR DESENHO DA ORIENTACAO}

%{\color{red}\begin{remark}
%		Notice that, the geometry of the problem allows us to see that the first return map $P_0$ preserves the orientation of the $y$-axis, nevertheless the orientation of the $x$-axis is reversed if $\ag<0$ and it is preserved if $\ag>0$. Therefore, we can see that $P_0$ preserves orientation if, and only if $Z_0\in\lambda_1^C$. It follows from the construction of $P_0$ that the constant $c$ in \eqref{firstreturn3} satisfies $c>0$.
%\end{remark}}

As mentioned in Section \ref{main_sec}, if $Z_0\in\Lambda_1^C$, then the fold line map $\psi_0$ defines a dynamics on $\sigma_{Z_0}^{FL}$ (which is an open interval of the $x$-axis) induced by the orbits of $Z_0$.

Now, let $Z_0\in\Lambda_1^M$, and without loss of generality, assume that $b<0$ in \eqref{firstreturn3}. From the proof of Lemma \ref{genericlemma}, we have that the map $\psi_0^*:V_0\rightarrow\gamma_0=S_{Z_0}\cap \widetilde{V_1}$ induced by the flow of $F_{Z_0}$ is given by $\psi_0^*(x,y)=(x+\er_2(x,y),0)$, and notice that, in this coordinate system, $\s^s=\widetilde{V_1}\cap\{y<0\}$ and $S_{Z_0}=\{y=0\}$. From Proposition \ref{classes}, we have that $\ag<0$ and thus $\mathcal{P}_0(x,0)=(\ag x,b x)+\er(x^2)\in\s^s$ if, and only if $x\geq0$.

Given $x>0$ small, we have that $\Psi_0(x,0)=\psi_0^*\circ\mathcal{P}_0(x,0)$ and $(x,0)$ are connected by orbits of $Z_0$. Now, 
$$\mathcal{P}_0(\Psi_0(x,0))=(\ag^2x, b\ag x)+\er(x^2)$$
does not belong to $\s^s$, since $b\ag x>0$, and therefore the points $\Psi_0^2(x,0)$ and $(x,0)$ are not connected by orbits of $Z_0$. That means that the iterations of $\Psi_0$ do not describe the dynamics of $Z_0$. In other words, the fold line map $\psi_0$ (which is the restriction of $\Psi_0$ to $x\geq 0$) does not induce any dynamics in the interval $\widetilde{V_1}\cap\{x\geq 0, y=0\}$. 

Nevertheless, given $x<0$, we have that
$$\mathcal{P}_0^2((x,0))=(\ag^2x, b\ag x)+\er(x^2)\in \s^s,$$
and hence $\psi_0^*\circ \mathcal{P}_0^2(x,0)$ and $(x,0)$ are connected by orbits of $Z_0$ (with a unique segment of sliding orbit). Therefore, we define the \textbf{full Möbius fold line map} $\Psi_0^{M}:\gamma_0\rightarrow\gamma_0$ of $Z_0$ as
\begin{equation*}
\Psi_0^M(x,0)=\psi_0^*\circ\mathcal{P}_0^2(x,0)=(\ag^2x+\er(x^2),0),
\end{equation*}
and the domain
\begin{equation*}
\sigma_{Z_0}^{M}= \mathcal{P}_0^{-1}(\mathcal{P}_0(S_0\cap\widetilde{V_1})\cap \s^{c})=\{x\leq 0\}\times\{0\}\cap \widetilde{V_1}.
\end{equation*} 
Accordingly as above, we define the \textbf{Möbius fold line map} of $Z_0$ as $\psi_0^{M}=\Psi_0^{M}\big|_{\sigma_{Z_0}^{M}}$. We conclude that $\psi_0^{M}(\sigma_{Z_0}^{M})\subset \sigma_{Z_0}^{M}$ and thus, this map defines a dynamical system in $\sigma_{Z_0}^{M}$ induced by the (real) orbits of $Z_0$, whether $Z_0\in\Lambda_1^M$. 

\begin{figure}[H]
	\centering
	\bigskip
	\begin{overpic}[width=7cm]{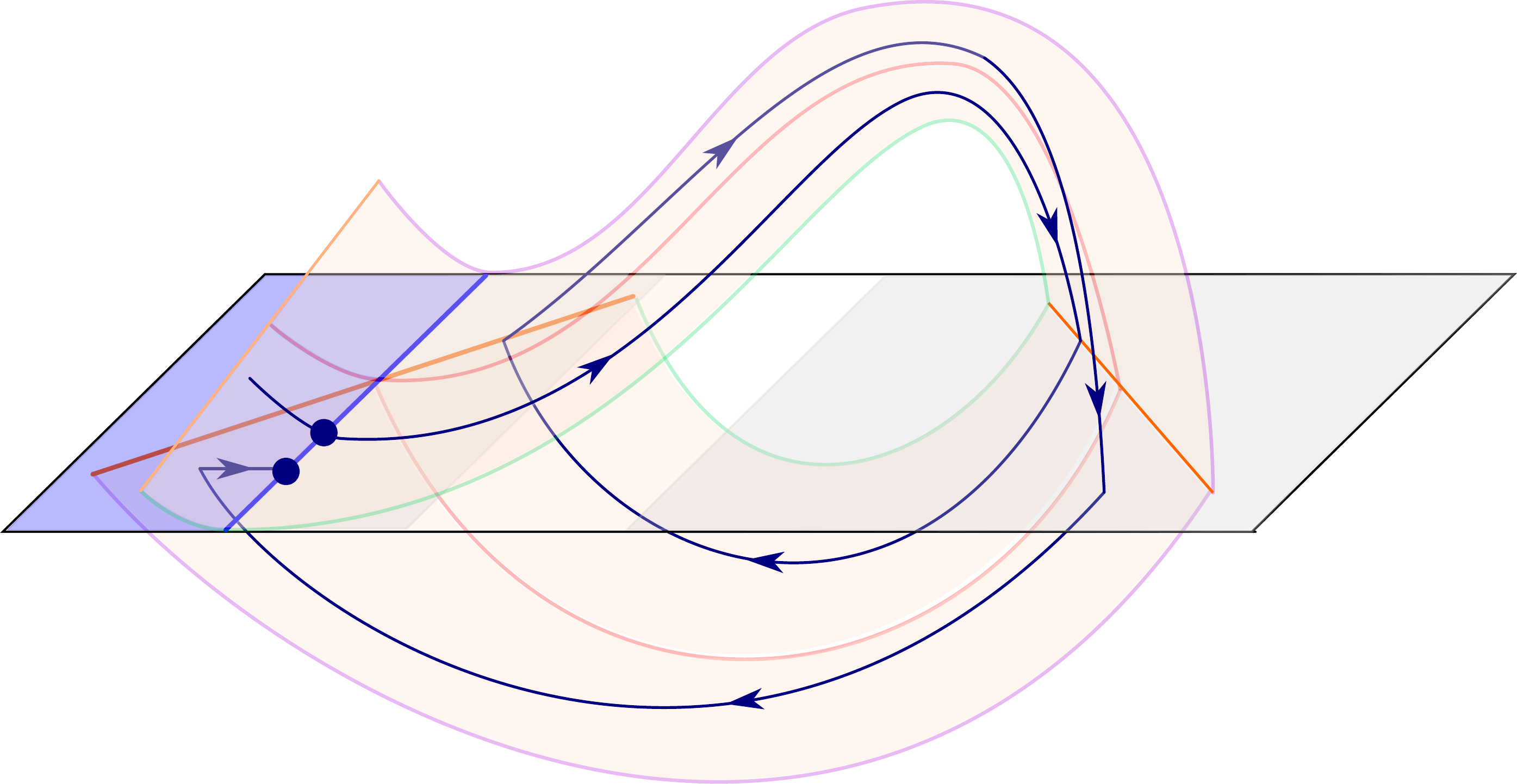}
		%							\begin{overpic}[grid,tics=5,width=13cm]{Figures/cycles.pdf}			
		%		\put(25,-3){{\footnotesize $(a)$}}		
		%		\put(75,-3){{\footnotesize $(b)$}}		
		%		\put(16.2,10.5){{\scriptsize $p_0$}}	
		%		\put(38,10.5){{\scriptsize $q_0$}}			
		%		\put(0,10){{\footnotesize $\s$}}	
		%		%\put(57,10){{\footnotesize $\s$}}	
		%		\put(26,22){{\footnotesize $\Gamma_0$}}			
		%		\put(10,20){{\footnotesize $X_0$}}	
		%		\put(10,2){{\footnotesize $Y_0$}}							
		
	\end{overpic}
	\bigskip
	\caption{Action of the Möbius fold line map $\psi_0^M$ of $Z_0\in\Lambda_1^M$.}	\label{mob_fig}
\end{figure}

\begin{remark}
	Notice that, if $Z_0\in\Lambda_1^M$, we can still define the Möbius fold line map $\psi_Z^M$ for every $Z$ sufficiently near $Z_0$, combining the ideas above with Section \ref{trans_sec}. Also, the origin is a hyperbolic fixed point of $\psi_0^M$ if, and only if, it is a hyperbolic fixed point of the fold line map $\psi_0$ of $Z_0$.
\end{remark}

\section{Proofs of Theorems \ref{cod1thm}, \ref{bifthm}, \ref{stabthm} and \ref{modthm}}\label{proofsqg}

In this section, we use the maps constructed in Section \ref{struct_sec} to prove Theorems  \ref{cod1thm}, \ref{bifthm}, \ref{stabthm} and \ref{modthm}.
%In this section, we prove Theorems A and B. More specifically, we show that quasi-generic homoclinic connections at a fold-regular singularity occur generically in one-parameter families in $\Or$, and thus it has codimension one in $\Or$.

\subsection{Proof of Theorem \ref{cod1thm}}
From Section \ref{first_sec}, there exist neighborhoods $\V_0$ of $Z_0$ in $\Or$ and $V_0$ of $p_0$ in $\s$ sufficiently small, such that, each $Z\in\V_0$ is associated to a full first return map $\mathcal{P}_Z: V_0\rightarrow \s$.%, where  $V_0$ is a neighborhood of $p_0$, such that $p_0\in C_0\subset V_Z$ and $C_0$ is a fixed compact set of $\s$.

Let $\mathcal{A}_0$ be a solid torus containing $\Gamma_0$ such that $V_0=\mathcal{C}_{p_0}(\mathcal{A}_0\cap\s)$ (connected component of $\mathcal{A}_0\cap\s$ containing $p_0$). In addition, for each $Z\in\V_0$, there exist coordinates $(x,y,z)$ (which has a $\Cr$-dependence on $Z$) defined in $V_0$, such that $\s$ is given by the plane $z=0$ and $S_Z\cap V_0$ is given by the $x$-axis.

Since $\mathcal{P}_{Z_0}$ has a unique hyperbolic fixed point $p_0$ in $V_0$, it follows from the Implicit Function Theorem that $\mathcal{P}_{Z}$ has a unique hyperbolic fixed point $p_Z$ in $V_0$, for each $Z\in\V_0$ (reduce $\V_0$ if necessary). Denoting the $y$-coordinate of $p_Z$ in the coordinate system $(x,y,z)$ by $p_Z^y$, it follows that $p_Z\in S_Z$ if, and only if $p_Z^y=0$.

Define $\zeta(Z)=p^y_Z$, for each $Z\in \V_0$. Therefore, it is straightforward to see that $\zeta(Z)=0$ if, and only if, $Z$ has a  homoclinic-like loop at $p_Z$ contained in $\mathcal{A}_0$. Also, it is not difficult to see that conditions $(G)$, $(T)$, $(ii)$ and $(iii)$ of Definition \ref{lambda1} hold for every $Z\in\V_0$, which means that $\zeta(Z)=0$ if, and only if, $Z$ has a quasi-generic loop at $p_Z$ contained in $\mathcal{A}_0$.

Now, let $Z^*=(X^*,Y^*)\in\V_0$ such that $\zeta(Z^*)=0$, and let $\mathcal{Z}_{\lambda}$ be a curve in $\Or$ such that $\mathcal{Z}_0=Z^*$, and $\mathcal{Z}_\lambda=(X^*, Y_{\lambda})$. In this case, 
$$\mathcal{P}_{\mathcal{Z}_0}(x,y)=(\ag x,b x)+\er_2(x,y),$$
for some $\ag\neq 0,\pm1$, and $b\neq 0$. Given $v\in\R$, we can take $Y_{\lambda}$ such that
$$\mathcal{P}_{\mathcal{Z}_\lambda}(x,y)=(0,-\lambda v)+(\ag x,b x)+\er_2(x,y,\lambda).$$

Again, applying the Implicit Function Theorem, we can see that $\zeta(\mathcal{Z}_{\cg})=v\cg+\er_2(\cg)$, hence  
$$\left.\dfrac{d}{d\lambda}\zeta(\mathcal{Z}_{\lambda})\right|_{\lambda=0}=v.$$

We conclude that $0$ is a regular value of $\zeta$. The result follows by noticing that $\Lambda_{1}\cap\mathcal{V}_0=\zeta^{-1}(0)$.

\subsection{Proof of Theorem \ref{bifthm}}

Let $\mathcal{Z}:(-\e,\e)\rightarrow \Or$ be a one-parameter $\mathcal{C}^1$ family such that $\mathcal{Z}(0)=Z_0$, which is transverse to $\Lambda_1$.

From Section \ref{first_sec}, there exist $\e>0$ sufficiently small and a neighborhood $V_0$ of $p_0$ in $\s$ sufficiently small, such that, each $\mathcal{Z}(\cg)$ is associated to a full first return map $\mathcal{P}_{\mathcal{Z}(\cg)}: V_0\rightarrow \s$. Let $\phi_{\cg}: V_0 \rightarrow \rn{3}$ be a change of coordinates (which has a $\Cr$ dependence on $\cg$) such that
\begin{itemize}
	\item $\s$ is brought into the plane $z=0$;
	\item The fold line $S_{\cg}$ of $\mathcal{Z}(\gamma)$ in $V_0$ is brought into the $x$-axis;
	\item If we denote $S_{\cg}^1=\mathcal{P}_{\mathcal{Z}(\cg)}(S_{\cg})$, then the point $S_{\cg}\cap S_{\cg}^1$ is carried into $(0,0,0)$. 
\end{itemize}

Consider the family $\overline{\mathcal{Z}}(\cg)= d\phi_{\cg}\circ\mathcal{Z}(\cg)\circ\phi_{\cg}^{-1}$ and notice that the families $\mathcal{Z}$ and $\overline{\mathcal{Z}}$ are equivalents. Since $\mathcal{Z}$ is transverse to $\Lambda_1$ at $0$, it follows that the same holds for $\overline{\mathcal{Z}}$.

Thus, the first return map $P_{\cg}=P_{\overline{\mathcal{Z}}(\cg)}$ (see Section \ref{first_sec}) is defined for $y\geq0$, and its extension $\mathcal{P}_{\cg}$ has a fixed point $$p_{\cg}=(\er(\cg), a\cg+\er(\cg^2), 0),$$
with $a\neq 0$. For instance, assume that $a>0$.

It means that $p_{\cg}\in\s^c$ if, and only if $\cg>0$, and thus $\overline{\mathcal{Z}}(\cg)$ has a unique hyperbolic crossing limit cycle in $\mathcal{A}_0$, if, and only if $\cg>0$.

Now, recall that the full fold line map $\psi_{\cg}=\psi_{\overline{\mathcal{Z}}(\cg)}$ defined in the fold line $y=0$ introduced in Section \ref{foldline_sec} controls the existence of sliding cycles. More specifically, it associates sliding cycles with fixed points of $\psi_{\cg}$ belonging to a certain domain $\sigma_{\cg}^{FL}=\sigma_{\overline{\mathcal{Z}}(\cg)}^{FL}$ which is given by \eqref{folddom}.

%Without loss of generality, assume that $S_{\cg}^1$ is tangent to the line $y=kx$, with $k<0$. Thus, the domain $D_{\cg}$ is given by
%$$D_{\cg}=\{(x,0,0); x>0\},\ \forall \cg\in(-\e,\e).$$

Since the origin is a hyperbolic fixed point of $\psi_{0}(x)$, it follows that $\psi_{\cg}(x)$ has a unique hyperbolic fixed point $x_{\cg}$. Hence $\overline{\mathcal{Z}}(\cg)$ has at most a unique sliding cycle in $\mathcal{A}_0$.

Now, we must check whether $x_{\cg}$ belongs to $\sigma_{\cg}^{FL}$. If $\gamma>0$, then $p_{\cg}\in\s^c$ and thus their invariant manifolds $W^1=W_0^s$ and $W^2=W_\ag^s$ (given by Theorem \ref{variedades}) intersect the $x$-axis in the points $x_1^+$ and $x_2^+$, respectively. Also, if $\gamma<0$, then $p_{\cg}\in\s^s$ and $W^1$ and $W^2$ intersect the $x$-axis in the points $x_1^-$ and $x_2^-$, respectively. Without loss of generality, assume that $S_{\cg}^1$ is tangent to the line $y=kx$ at the origin, with $k<0$. It follows that $x_1^+<x_2^+$ and $x_1^->x_{2}^-$. Now, assume that $\mathcal{Z}(0)$ satisfies $(N)$. 

In the case $\cg>0$, the point $p_{\cg}$ is in $\s^c$ and it is attractive. Using that $\mathcal{P}_{\cg}(x_{i}^{+},0,0)$ must stay in $W^i$ and goes to $p_{\cg}$, it follows that $\mathcal{P}_{\cg}(x_{i}^{+},0,0)\in\s^c$, $i=1,2$, which means that if $x\leq x_{2}^+$ then $\mathcal{P}_{\cg}(x,0,0)$ belongs to $\s^c$ and thus all these points do not belong to the domain $\sigma_{\cg}^{FL}$ (recall that $k<0$). Nevertheless, we know that the $x$-axis has a unique attractive fixed point $x_{\cg}$ of $\psi_{\cg}$ and $\pi_1\circ\mathcal{P}_{\cg}(x_2^+,0,0)<x_2^+$ leads us to $\psi_{\cg}(x_2^+)<x_2^+$, which means that $x_{\cg}<x_{2}^+$, and thus $x_{\cg}\notin \sigma_{\cg}^{FL}$. We conclude that, if $\cg>0$, then $\overline{\mathcal{Z}}(\cg)$ has no sliding cycles. 

Now, if $\cg<0$, then $p_{\cg}\in\s^s$ and through similar arguments, it follows that $\mathcal{P}_{\cg}(x_i^-,0,0)\in\s^s$, $i=1,2$, and thus, if $x\geq x_2^-$ then $\mathcal{P}_{\cg}(x,0,0)$ belongs to $\s^s$ and thus all these points belong to the domain $\sigma_{\cg}^{FL}$. In this case, $\pi_1\circ\mathcal{P}_{\cg}(x_2^-,0,0)<x_2^-$ and thus $\psi_{\cg}(x_2^-)>x_2^-$, which means that $x_{\cg}> x_{2}^-$ and hence $x_{\cg}\in \sigma_{\cg}^{FL}$. We conclude that,  if $\cg<0$, then $\overline{\mathcal{Z}}(\cg)$ has a unique sliding cycle. See Figure \ref{nodebif_fig}.

\begin{figure}[h!]
	\centering
	\begin{overpic}[width=11cm]{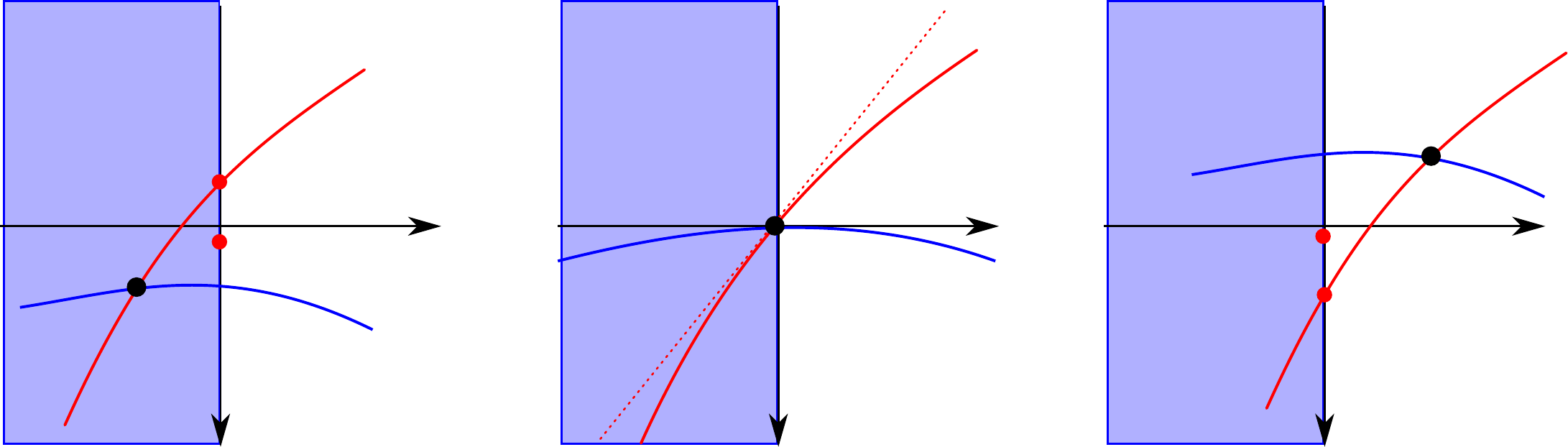}
		%	\begin{overpic}[grid,tics=5,width=11cm]{Figures/nodebif.pdf}	
		\put(8.5,8){{\scriptsize $p_{\cg}$}}		
		\put(50,12){{\scriptsize $p_{0}$}}
		\put(91.5,16.5){{\scriptsize $p_{\cg}$}}											
		\put(46,-3){{\footnotesize $\cg=0$}}																
		\put(11,-3){{\footnotesize $\cg<0$}}	
		\put(82,-3){{\footnotesize $\cg>0$}}	
		\put(24,24){{\footnotesize $W^2$}}			
		\put(63,24){{\footnotesize $W^2$}}		
		\put(101,24){{\footnotesize $W^2$}}			
		\put(24,6){{\footnotesize $W^1$}}					
		\put(64,11){{\footnotesize $W^1$}}		
		\put(100,15){{\footnotesize $W^1$}}
		\put(86,12){{\footnotesize $\psi_{\cg}(x_2^+)$}}	
		\put(86,8){{\footnotesize $x_2^+$}}	
		\put(15,12){{\footnotesize $\psi_{\cg}(x_2^-)$}}	
		\put(16,16){{\footnotesize $x_2^-$}}		
		\put(60,15){{\footnotesize $y$}}	
		\put(52,0){{\footnotesize $x$}}	
		\put(60,28){{\footnotesize $y=kx$}}																														
	\end{overpic}
	\bigskip
	\caption{Position of the invariant manifolds $W^1$ and $W^2$ of $\mathcal{P}_{\cg}$ in the case $(N)$ as $\cg$ varies.}	\label{nodebif_fig}
\end{figure}

% $\s$ into the plane $z=0$ and the fold line of $\mathcal{Z}(\gamma)$ into the $x$-axis. Thus, $\mathcal{Z}(\gamma)$ is equivalent to the family of vector fields $\overline{\mathcal{Z}}(\gamma)=\mathcal{Z}(\gamma)\circ \phi_{\cg}^{-1}$ such that in the system of coordinates $(x,y,z)$, the switching manifold $\s$ is brought to $z=0$ and the fold line of $dd$  
%\subsection{Bifurcation of Crossing and Sliding Cycles}

\subsection{Proof of Theorem \ref{stabthm}}

From the construction of the full first return map $\mathcal{P}_0$ of $Z_0$ in Section \ref{first_sec}, it follows that, to prove Theorem \ref{stabthm}, it is enough to compute the basin of attraction of the origin of the map $\mathcal{P}_0$ and to analyze the sliding dynamics of $Z_0$. 

If $Z_0$ satisfies $(N)$, then the origin is a hyperbolic fixed point of $\mathcal{P}_0$ of nodal type and thus, there exists a neighborhood of the origin which is the basin of attraction of $\mathcal{P}_0$ at $(0,0)$. Since all the sliding orbits of $Z_0$ near the origin reaches the fold line $S_{Z_0}$ of $Z_0$ and the origin is an attractive hyperbolic fixed point of the fold line map $\psi_0$, it follows that every orbit of $Z_0$ near the origin goes to the origin. Statement $(i)$ of Theorem \ref{stabthm} follows directly.

Now, if $Z_0$ satisfies $(S)$, then the origin is a hyperbolic fixed point of $\mathcal{P}_0$ of saddle type, and thus the basin of attraction of 
$\mathcal{P}_0$ at $(0,0)$ is given by the stable invariant manifold $W^u_{\ag}$. Hence all the orbits of $Z_0$ passing through $W^u_{\ag}\cap \s^c$ goes to the origin.

Also, there exists a unique sliding orbit $\cg_s$ of the sliding vector field $F_{Z_0}$ which goes to the origin. Since the origin is a repelling hyperbolic fixed point of the fold line map $\psi_0$, it follows that an orbit $\Gamma$ of $Z_0$ goes to the origin if and only $\Gamma\cap\s$ contains a point of the piecewise-smooth curve $\bg=W^u_{\ag}\cap \s^c\cup \cg_s$. The proof of Theorem \ref{stabthm} follows directly.

\subsection{Proof of Theorem \ref{modthm}}

In order to prove Theorem \ref{modthm}, we study the behavior of the iterations of the fold line $S_0$ of $Z_0\in\Lambda_1^C$ through its full first return map $\mathcal{P}_0$.

\begin{lemma}[Accumulation] \label{acc_lemma}
	Let $Z_0=(X_0,Y_0)\in\Lambda_1^C$ having a quasi-generic loop $\Gamma_0$ at $p_0$ and let $\mathcal{P}_0$ be the full first return map associated to $Z_0$ given by \eqref{firstreturn3}. If $Z_0$ satisfies $(S)$, then $S_n=\mathcal{P}_0^n(S_{X_0})$, $n\in\N$, is a sequence of smooth curves tangent to the eigenspace $\mathcal{E}_{\ag}$ given by Proposition \ref{variedades} at $p_0$, such that, for each $\e>0$ sufficiently small, there exists $N_0\in\N$ such that $S_n$ is $\e$- close to the unstable invariant manifold $W^u_\ag$ of $\mathcal{P}_0$ at $p_0$, for every $n\geq N_0$. Furthermore, for each $n\geq 2$, $S_n$ is a curve having an even contact with $S_{n-1}$ at $p_0$ and the following statements hold
	\begin{enumerate}[i)]
		\item In $\s^c$, $S_{n-1}$ and $S_n$ are given by arcs clockwise ordered, and thus $S_n\cap \s^c$ and $W^u_{\ag}$ are clockwise ordered for every $n\in\N$;
		\item In $\s^s$, $S_n$ is flipped back to the region delimited by $S_{n-1}$ and $S_{n-2}$, for every $n\geq 2$. Thus, $S_n$ alternates the side of $W^u_{\ag}$ (flip property), which means that, if $W^u_{\ag}$ and $S_{n-1}$ are counterclockwise ordered, then $S_n$ and $W^u_{\ag}$ are counterclockwise ordered, and vice-versa.
	\end{enumerate}
	
\end{lemma}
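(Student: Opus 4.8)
The engine of the proof is the \emph{folding structure} of $\mathcal{P}_0$. Since $\mathcal{P}_0=\mathcal{D}_0\circ\mathcal{T}_0$ and the transition map $\mathcal{T}_0(x,y)=(x,\sqrt{y^2+2\e})$ depends on $y$ only through $y^2$, I would first record that $\mathcal{P}_0$ factors as $\mathcal{P}_0=Q\circ\mathcal{F}$, where $\mathcal{F}(x,y)=(x,y^2)$ is the fold and $Q$ is a local diffeomorphism acting on coordinates $(x,\zeta)$, with $DQ(0)=\left(\begin{smallmatrix}\ag&0\\ b&c\end{smallmatrix}\right)$; in particular $\partial_\zeta\pi_2 Q(0)=c>0$ since $Z_0\in\Lambda_1^C$. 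Here $S_0=S_{X_0}$ is the $x$-axis, and by Lemma~\ref{genericlemma} $D\mathcal{P}_0(0)$ sends every vector with nonzero first coordinate onto $\mathcal{E}_{\ag}=\mathrm{span}\{(\ag,b)\}$; hence $S_1=\mathcal{P}_0(S_0)$, and inductively every $S_n$ ($n\ge 1$), is a smooth arc tangent to $\mathcal{E}_{\ag}$ at $p_0$. Smoothness holds because each $S_n$ is a graph $y=g_n(x)$ through $p_0$: the only critical locus of $\mathcal{F}$ is the fold line $\{y=0\}$, which each $S_{n-1}$ meets transversally only at $p_0$.

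Next I would prove the accumulation onto $W^u_{\ag}$. As $Z_0$ satisfies $(S)$ we have $\ag>1$ (Proposition~\ref{classes}), so the origin is a saddle of $\mathcal{P}_0$ with a super-stable direction $\mathcal{E}_0$ (eigenvalue $0$) and an expanding direction $\mathcal{E}_{\ag}$ (eigenvalue $\ag>1$). Because $S_0$ is transverse to $W^s_0$, an inclination-lemma ($\lambda$-lemma) argument, adapted to this non-invertible super-saddle, applies: writing the arcs as graphs over the unstable direction and iterating the graph transform, the transverse contraction (here even stronger than usual, the eigenvalue being $0$) against the expansion by $\ag>1$ makes the graph transform a contraction, so $S_n\to W^u_{\ag}$ in the $\mathcal{C}^1$ topology on compact arcs. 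This gives, for every $\e>0$, an $N_0$ with $S_n$ $\e$-close to $W^u_{\ag}$ for $n\ge N_0$, and in particular $g_n\to g_\infty$ uniformly, where $y=g_\infty(x)$ is the graph of $W^u_{\ag}$.

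The heart of the argument is the ordering, and here the fold does the work. From $\mathcal{P}_0=Q\circ\mathcal{F}$ one has $S_{n+1}=Q(\{\zeta=g_n(x)^2\})$, so comparing consecutive arcs reduces, through the diffeomorphism $Q$ (whose orientation is fixed by $c>0$), to comparing the nonnegative functions $g_n^2$ and $g_{n-1}^2$. The decisive identity is the difference of squares $g_n(x)^2-g_{n-1}(x)^2=(g_n(x)-g_{n-1}(x))(g_n(x)+g_{n-1}(x))$: since $g_k(x)=(b/\ag)x+\er(x^2)$, the sum factor has sign $\sgn(bx)$, which is positive on the branch lying in $\s^c$ and negative on the branch lying in $\s^s$. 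Writing $t_n=\sgn(g_n-g_{n-1})$ on a fixed branch, this yields the recursion $t_{n+1}=\sgn(bx)\,t_n$, with base case $t_2=+$ on both branches (because $g_1^2\ge 0=g_0^2$). On the $\s^c$ branch the sign is preserved, so $g_1<g_2<\cdots\to g_\infty$ is monotone: the arcs $S_n\cap\s^c$ are clockwise ordered and all lie on one side of $W^u_{\ag}$, which is $(i)$. On the $\s^s$ branch the sign alternates, so $g_n$ oscillates, is trapped between $g_{n-1}$ and $g_{n-2}$, and alternates the side of $g_\infty$: this is the flip property $(ii)$. The (even) tangential contact of $S_n$ with $S_{n-1}$ at $p_0$ then comes out of the same mechanism, the positivity $\zeta=g^2\ge 0$ forcing each arc onto the definite side $Q(\{\zeta\ge0\})$ and the common tangent line $\mathcal{E}_{\ag}$ preventing transversal crossing.

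The main obstacle is that the convergence $S_n\to W^u_{\ag}$ is \emph{beyond all orders}: because the transverse eigenvalue is $0$, the graph transform pins down one further Taylor coefficient of $g_n$ at each step, so every finite jet of $g_n$ stabilizes and $g_n-g_\infty$ is flat at $p_0$. Consequently neither the side of $W^u_{\ag}$ nor the alternation can be read off from Taylor data, and finite-jet hyperbolicity arguments are blind to the flip. This is exactly why the exact positivity $\zeta=y^2\ge 0$ and the difference-of-squares factorization are indispensable: they produce the signs $t_n$ honestly, without reference to jets. The remaining work is bookkeeping—fixing the orientation of $Q$ from $c>0$, checking (via orientation preservation of $\mathcal{P}_0$ on $\Lambda_1^C$) that the two branches of each $S_n$ land in $\s^c$ and $\s^s$ respectively, and making the inclination-lemma estimate uniform in $n$ so that the monotone and oscillating sequences $\{g_n\}$ genuinely converge to $g_\infty$.
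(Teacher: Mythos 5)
Your decomposition $\mathcal{P}_0=Q\circ\mathcal{F}$ is the paper's own device in light disguise: the paper writes $\mathcal{P}_0=\mathcal{D}_0\circ\mathcal{T}_0$ with $\mathcal{T}_0(x,y)=(x,\sqrt{2\e}+Ky^2+\er(y^3))$, which is exactly your fold $\mathcal{F}(x,y)=(x,y^2)$ followed by a one-dimensional diffeomorphism that you absorb into $Q$. The ordering mechanism is also the same: the paper fixes $b<0$, $c>0$, places the parabola $\mathcal{T}_0(S_1)$ inside the section $\tau$, uses that $\mathcal{D}_0$ preserves orientation to locate $S_2$ in the quadrants cut out by $S_0$ and $S_1$, and then settles all higher iterates with the phrase ``following the same scheme''. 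Your sign recursion $t_{n+1}=\sgn(bx)\,t_n$, $t_2=+$, obtained from the factorization $g_n^2-g_{n-1}^2=(g_n-g_{n-1})(g_n+g_{n-1})$, is a cleaner and more explicit way of running precisely that induction, and your graph-transform argument for the accumulation is if anything stronger than the paper's appeal to ``the dominant part'' of the normal form of Proposition \ref{firstprop}, which only claims $\mathcal{C}^0$ closeness. So on the main claims (tangency to $\mathcal{E}_{\ag}$, accumulation onto $W^u_{\ag}$, and the orderings (i), (ii) -- which are what the proof of Theorem \ref{modthm} actually uses) your proof matches the paper's and is, in the inductive step, more rigorous.

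There is, however, one genuine defect, and it is one you share with the paper: the even-contact clause. Your argument for it --- positivity of $\zeta=g^2$ pins each arc to one side, and the common tangent prevents transversal crossing --- only controls the position of every $S_n$, $n\geq 2$, relative to $S_1=Q(\{\zeta=0\})$; it says nothing about the relative position of a consecutive pair $S_{n-1},S_n$ with $n\geq 3$, both of which lie inside $Q(\{\zeta\geq 0\})$. In fact your own recursion refutes the clause for odd $n$: on the $\s^c$ branch $t_n=+$ for all $n$, while on the $\s^s$ branch $t_n=(-1)^n$, so for odd $n\geq 3$ the curves $S_{n-1}$ and $S_n$ lie on opposite sides of each other on the two branches, i.e.\ they cross at $p_0$; equivalently, the order of contact, which the same mechanism shows to be exactly $n$ (since $Q$ preserves contact order, $\mathrm{ord}(g_n-g_{n-1})=\mathrm{ord}(g_{n-1}-g_{n-2})+\mathrm{ord}(g_{n-1}+g_{n-2})=\mathrm{ord}(g_{n-1}-g_{n-2})+1$ with $\mathrm{ord}(g_1-g_0)=1$), is odd. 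The paper has the same problem: its proof verifies only the quadratic contact of $S_2$ with $S_1$ and asserts the rest analogously, and its own flip property (ii) is incompatible with even contact of $S_n$ with $S_{n-1}$ for odd $n$. So treat this as a flaw in the statement being proved rather than a repairable hole in your argument; everything else in your proposal stands.
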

\begin{proof}
	From Proposition \ref{classes}, we have that the parameters in  \eqref{firstreturn3} satisfy $\ag>0$ and $c>0$. Recall that the coordinate system $(x,y,z)$ at $p_0$ used to express $\mathcal{P}_0$ as  \eqref{firstreturn3} satisfies the following properties.
	%
	%%\subsubsection{A Fundamental Domain for $\mathcal{P}_0$}
	%In what follows, given $Z_0\in\Lambda_1^C$, we construct a fundamental domain for the full first return map $\mathcal{P}_0$ given by \eqref{firstreturn3} (with $\ag>0,\ag\neq 1$ and $c>0$). We present a geometrical description of the domain.% and secondly we develop it through analytical arguments.
	%
	%Since $Z_0\in\Lambda_1^C$, there exist coordinates $(x,y,z)$ at $p_0$ such that
	\begin{enumerate}
		\item $\s$ is expressed by $\{z=0\}$;
		\item The fold line $S_{Z_0}=S_0$ of $Z_0$ is given by the $x$-axis;
		\item Without loss of generality, we assume that $b<0$ in \eqref{firstreturn3}, thus the curve $S_1=\mathcal{P}_0(S_0)$ of $S_0$ is tangent to the line $y=kx$ at the origin, where $k=b/\ag<0$.
	\end{enumerate}
	
Such a configuration in the switching manifold ($z=0$) is illustrated in Figure \ref{position_fig}.
	
	\begin{figure}[h!]
		\centering
		\bigskip
		\begin{overpic}[width=5cm]{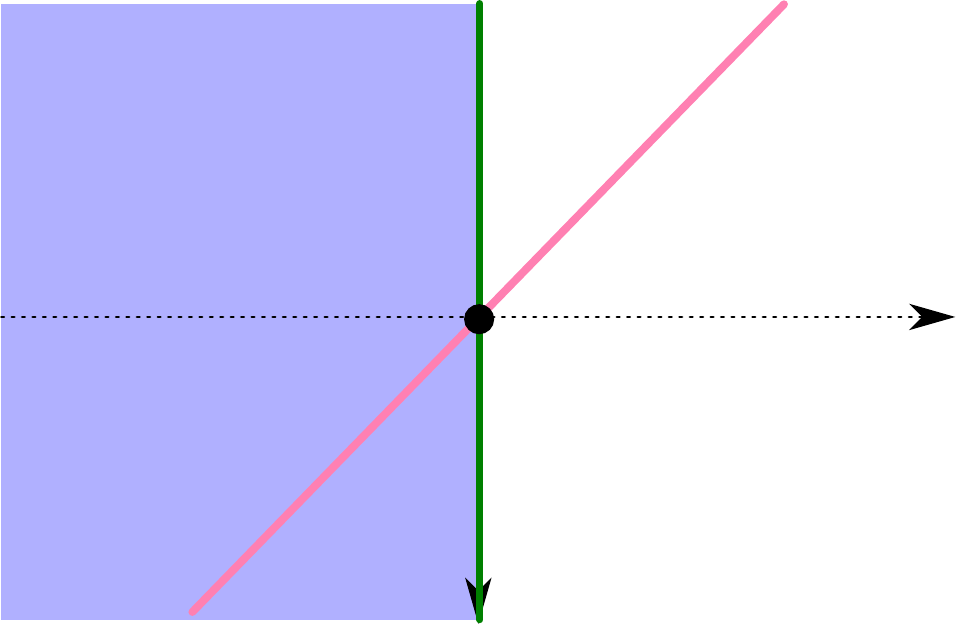}
			%										\begin{overpic}[grid,tics=5,width=6cm]{Figures/config.pdf}								
			\put(52,60){{\footnotesize $S_0$}}																
			\put(85,60){{\footnotesize $S_1$}}	
			\put(5,58){{\footnotesize $\s^s$}}	
			\put(52,2){{\footnotesize $x$}}	
			\put(95,27){{\footnotesize $y$}}						
		\end{overpic}
		\bigskip
		\caption{Configuration of $S_0$ and $S_1$ in $\s$.}	\label{position_fig}
	\end{figure}

	Since $Z_0$ satisfies $(S)$, it follows from Proposition \ref{variedades} that the map $\mathcal
	P_0$ has a fixed point of saddle type at the origin which has a stable invariant manifold $W^s_{\ag}$ tangent to the $y$-axis and a unstable invariant manifold $W^u_{\ag}$ tangent to the line $y=kx$. 
	
	%In what follows, we show that the first and third quadrant delimited by $S_0$ and $S_1$ are fundamental domains of $\mathcal{P}_0$.
	In what follows, we describe how the iterations of $S_0$ through $\mathcal P_0$ behave. From the expression of $\mathcal
	P_0$ in \eqref{firstreturn3}, we have that $S_n=\mathcal P_0^n(S_0)$ is a smooth curve passing through $(0,0)$ tangent to the line $y=kx$ at $(0,0)$, for each $n\in\N$. Clearly, $W^s_{\ag}\cap \mathcal{P}_0^n(S_0)=\emptyset$, for each $n\in \N$, since $S_0$ and $S_1$ are transversal.
	
	Now, in order to obtain the positions of the curves $S_n$ in $\s$, we must recall the construction of the map $\mathcal P_0$. In Section \ref{first_sec}, $\mathcal
	P_0$ is written as the composition $\mathcal P_0=\mathcal D_0\circ\mathcal T_0$, where $\mathcal T_0$ is a transition map from $\s$ to a transversal section $\tau=\{z=\e\}$, for $\e>0$ small, and $\mathcal D_0$ is an orientation-preserving diffeomorphism from $\tau$ to $\s$. In addition, notice that
	$$\mathcal T_0(x,y)=(x,\sqrt{2\e} + Ky^2+\er(y^3)),\textrm{ for some }K>0.$$
	
	Without loss of generality, consider that $S_1$ is the line $y=kx$. Now, we describe how to obtain $S_n$, for $n\geq 2$. 
	
	We consider $n=2$, since the other cases follow completely analogous. Notice that $\mathcal T_0(x,kx)=(x,\sqrt{2\e}+K_2x^2+\er(x^3))$, where $K_2=Kk^2>0$, describes a parabola tangent to the origin contained in the semi-plane $y\geq \sqrt{2\e}$ of section $\tau$. Since the line $y=\sqrt{\e}$ is sent to the line $S_1$ in $\s$ through the diffeomorphism $\mathcal D_0$ (which preserves the orientation of the section $\tau$), it follows that $\mathcal P_0(S_1)$ is a parabola which has a quadratic contact with $S_1$ at the origin. In addition, $\mathcal P_0(S_1)\cap \s^c$ is contained in the first quadrant delimited by $S_0$ and $S_1$ and $\mathcal P_0(S_1)\cap \s^s$ is contained in the fourth quadrant generated by $S_0$ and $S_1$ (see Figure \ref{sn_fig}). 
	
	\begin{figure}[h!]
		\centering
		\bigskip
		\begin{overpic}[width=13cm]{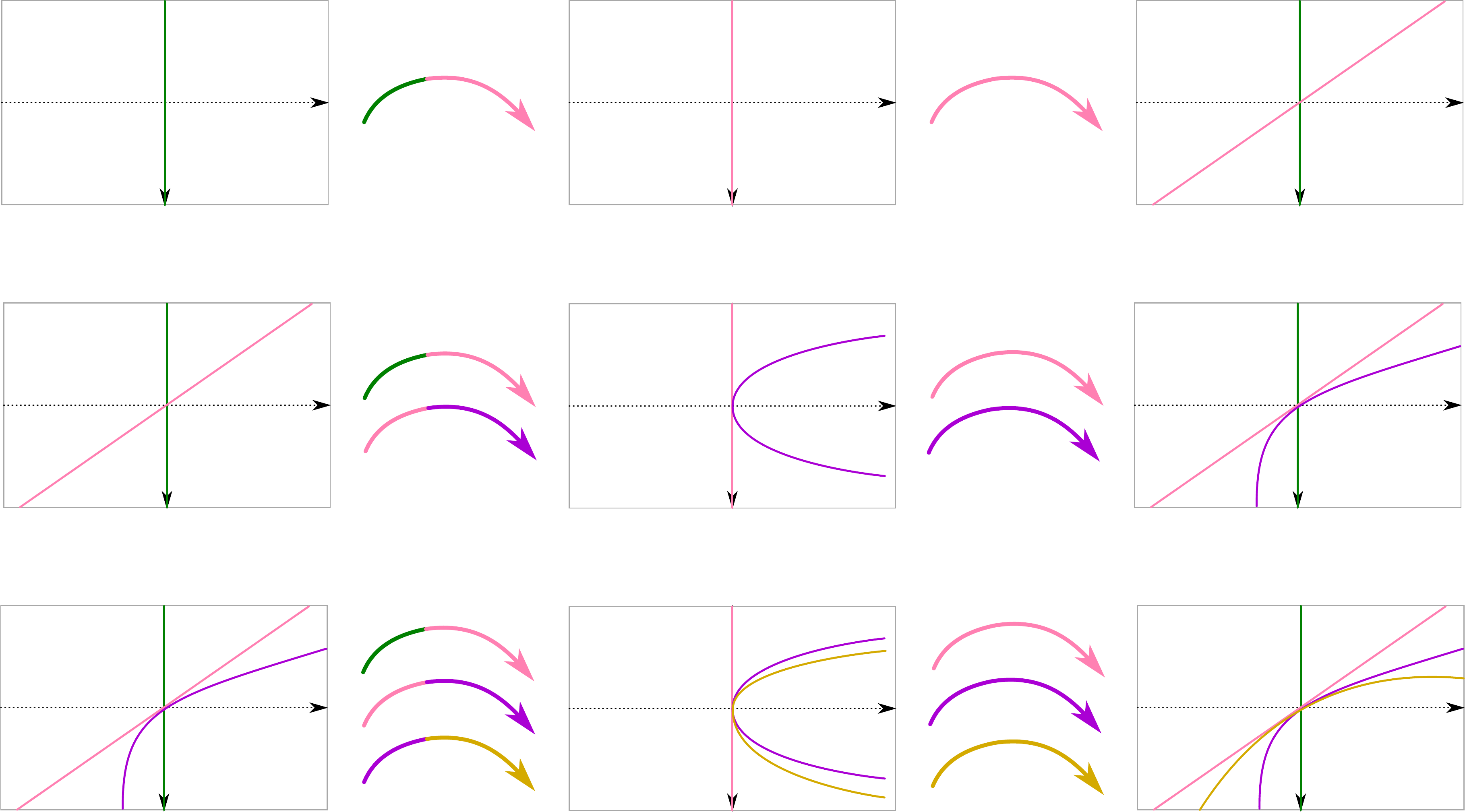}
			%					\begin{overpic}[grid,tics=5,width=13.5cm]{Figures/sn.pdf}								
			\put(46,-3){{\footnotesize $z=\e$}}																
			\put(11,-3){{\footnotesize $z=0$}}	
			\put(82,-3){{\footnotesize $z=0$}}	
			\put(30,53){{\footnotesize $\mathcal T_0$}}	
			\put(68,53){{\footnotesize $\mathcal D_0$}}	
			\put(-8,7){{\footnotesize $n=3$}}	
			\put(-8,28){{\footnotesize $n=2$}}		
			\put(-8,48){{\footnotesize $n=1$}}

			\put(23,45){{\scriptsize  $S_0$}}		
			\put(36,45){{\scriptsize  $S_0'$}}	
			
			\put(23,26.5){{\scriptsize  $S_0$}}		
			\put(36,26.5){{\scriptsize  $S_0'$}}

			\put(23,23){{\scriptsize  $S_1$}}		
			\put(36,23){{\scriptsize  $S_1'$}}	
			
			\put(23,8){{\scriptsize  $S_0$}}		
			\put(36,8){{\scriptsize  $S_0'$}}		
			
			\put(23,4.5){{\scriptsize  $S_1$}}		
			\put(36,4.5){{\scriptsize  $S_1'$}}
			
			\put(23,1){{\scriptsize  $S_2$}}		
			\put(36,1){{\scriptsize  $S_2'$}}

			\put(62,45){{\scriptsize  $S_0'$}}		
			\put(75,45){{\scriptsize  $S_1$}}	
			
			\put(62,26.5){{\scriptsize  $S_0'$}}		
			\put(75,26.5){{\scriptsize  $S_1$}}

			\put(62,23){{\scriptsize  $S_1'$}}		
			\put(75,23){{\scriptsize  $S_2$}}	
			
			\put(62,8){{\scriptsize  $S_0'$}}		
			\put(75,8){{\scriptsize  $S_1$}}		
			
			\put(62,4.5){{\scriptsize  $S_1'$}}		
			\put(75,4.5){{\scriptsize  $S_2$}}
			
			\put(62,1){{\scriptsize  $S_2'$}}		
			\put(75,1){{\scriptsize  $S_3$}}	
			
			\put(12,53){{\footnotesize $S_0$}}
			\put(51,53){{\footnotesize $S_0'$}}		
			\put(89,53){{\footnotesize $S_0$}}
			\put(97.5,53){{\footnotesize $S_1$}}		
			
			\put(12,33){{\scriptsize $S_0$}}	
			\put(20,32){{\scriptsize $S_1$}}	
			\put(89,33){{\scriptsize $S_0$}}	
			\put(98,33){{\scriptsize $S_1$}}	
			\put(97.5,29.5){{\scriptsize $S_2$}}	
			\put(51,33){{\scriptsize $S_0'$}}	
			\put(59,33){{\scriptsize $S_1'$}}
			
			\put(12,13){{\scriptsize $S_0$}}	
			\put(20,12){{\scriptsize $S_1$}}
			\put(20,9){{\scriptsize $S_2$}}		
			
			\put(51,12){{\scriptsize $S_0'$}}	
			\put(59,12){{\scriptsize $S_1'$}}	
			\put(59,9){{\scriptsize $S_2'$}}	
			
			\put(89,12){{\scriptsize $S_0$}}	
			\put(99,14){{\scriptsize $S_1$}}	
			\put(100,11){{\scriptsize $S_2$}}	
			\put(100,8){{\scriptsize $S_3$}}			
		\end{overpic}
		\bigskip
		\caption{Iteration scheme of the fold line $S_0$ through $\mathcal P_0$. Denote $\mathcal T_0(S_i)=S_i'$.}	\label{sn_fig}
	\end{figure} 
	
	Notice that, in $\s^c$, the iterations $S_0$, $S_1$ and $S_2$ are clockwise ordered, nevertheless, in $\s^s$, $S_0$, $S_2$, $S_1$ are counterclockwise ordered. It allows us to see that, in $\s^s$, the second iteration of $S_0$ have flipped back to the region between $S_0$ and $S_1$. Following the same scheme, we prove items $(i)$, and $(ii)$.
	
	Now, using Proposition \ref{firstreturntermos} and the dominant part of $\mathcal{P}_0$, it follows that $S_n$ accumulates onto $W^u_{\ag}$ in the $\mathcal{C}^0$-topology. 
\end{proof}
%Since $S_n$ and $W^u_{\ag}$ are tangent to the same line at the origin, it follows that $S_n$ must accumulate in $W^u_{\ag}$ as $n\rightarrow\infty$ in the $\mathcal{C}^0$-topology. 

Notice that Lemma \ref{acc_lemma} gives rise to a region $F_0$, which works as a fundamental domain for $\mathcal{P}_0$ restricted to a certain region. See Figure \ref{fund_fig}.

\begin{figure}[h!]
	\centering
	\bigskip
	\begin{overpic}[width=6cm]{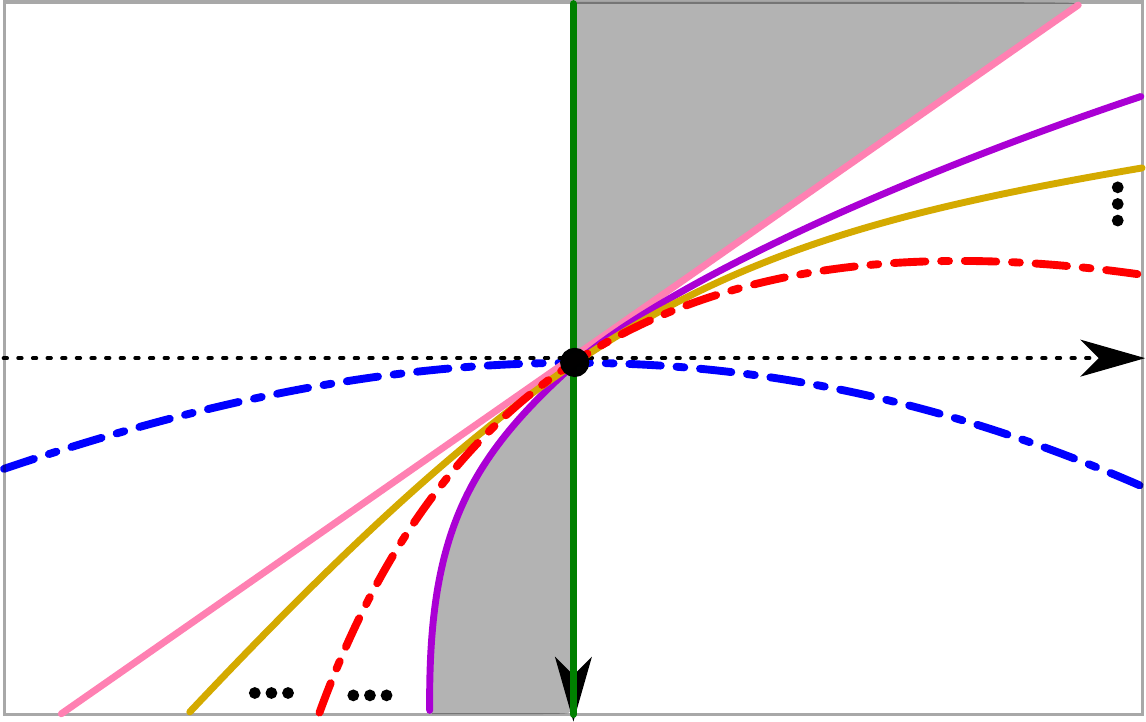}
		%	\begin{overpic}[grid,tics=5,width=8cm]{Figures/fundamental.pdf}								
		
		\put(45,59){{\footnotesize $S_0$}}	
		\put(94,59){{\footnotesize $S_1$}}	
		\put(100.5,54){{\footnotesize $S_2$}}		
		\put(100.5,47){{\footnotesize $S_3$}}		
		\put(100.5,36){{\small $W^u_{\ag}$}}		
		\put(100.5,19){{\small $W^s_{\ag}$}}

		\put(51,-3){{\footnotesize $S_0$}}	
		\put(4,-3){{\footnotesize $S_1$}}	
		\put(16,-3){{\footnotesize $S_3$}}		
		\put(37,-3){{\footnotesize $S_2$}}		
		\put(26.5,-4){{\small $W^u_{\ag}$}}		
		\put(-8,20){{\small $W^s_{\ag}$}}			
		
		\put(60,50){$F_0$}									
	\end{overpic}
	\bigskip
	\caption{Region $F_0$.}	\label{fund_fig}
\end{figure} 

Finally, we are able to prove Theorem \ref{modthm}. Let $\mathcal{P}_0$  and $\widetilde{\mathcal{P}_0}$ be the full first return maps associated to $Z_0$ and $\widetilde{Z}_0$, respectively, and assume that $h$ is a weak equivalence between $Z_0$ and $\widetilde{Z}_0$.
Using Proposition \ref{firstreturntermos}, we can see that there exist coordinate systems $(x,y,z)$ and $(\widetilde{x},\widetilde{y},\widetilde{z})$ at $p_0$ and $\widetilde{p_0}$, respectively, such that $\mathcal{P}_0$  and $\widetilde{\mathcal{P}_0}$ are given by
$$\mathcal{P}_0(x,y)=(\ag x-c\ag y^2+cx^2+\er_3(x,y),x),$$
and
$$\widetilde{\mathcal{P}_0}(\widetilde{x},\widetilde{y})=(\widetilde{\ag} x-\widetilde{c}\widetilde{\ag} \widetilde{y}^2+\widetilde{c}\widetilde{x}^2+\er_3(\widetilde{x},\widetilde{y}),\widetilde{x}),$$
respectively. Also, the fold lines $S_{Z_0}$ and $S_{\widetilde{Z_0}}$ are given by the $x$-axis and the $\widetilde{x}$-axis, respectively.
In this case, $\mathcal{W}(Z_0)=\ag$ and $\mathcal{W}(\widetilde{Z_0})=\widetilde{\ag}$.

Consider the same notation used in the proof of Lemma \ref{acc_lemma}. Let $\delta>0$ sufficiently small, and consider the map $\mathcal{P}_0$. There exists a unique point $w\neq (0,0)$ of $W^u_{\ag}\cap\{ y=\dg \}$, and, for each $n\in\N$, take $y_n$ as the unique point contained in $S_n\cap \{ y=\dg \}$. Therefore, from the construction above, there exists a sequence $(x_n,0)\in S_0$ such that 
\begin{enumerate}
	\item $(x_n,0)\rightarrow (0,0)$ as $n\rightarrow\infty$;
	\item $y_n=\mathcal{P}_0^n(x_n,0)$, for each $n\in\N$;
	\item $y_n\rightarrow w$ as $n\rightarrow\infty$.
\end{enumerate}

Now, for the map $\widetilde{\mathcal{P}_0}$, consider $\widetilde{w}=h(w)$, $\widetilde{x_n}=h(x_n)$ and $\widetilde{y_n}=h(y_n)$, for each $n\in\N$. Since $h$ is a weak-equivalence and $w\neq (0,0)$, it follows that
\begin{enumerate}
	\item $\widetilde{w}\neq (0,0)$;
	\item $(\widetilde{x_n},0)\rightarrow (0,0)$ as $n\rightarrow\infty$;
	\item $\widetilde{y_n}=\widetilde{\mathcal{P}_0}^n(\widetilde{x_n})$, for each $n\in\N$;
	\item $\widetilde{y_n}\rightarrow \widetilde{w}$ as $n\rightarrow\infty$.
\end{enumerate}

Notice that, since $\ag,\widetilde{\ag}\neq 0$, it follows that the dynamics of the systems near the invariant manifolds $W^u_\ag$ and  $W^u_{\widetilde{\ag}}$ of $\mathcal{P}_0$ and $\widetilde{\mathcal{P}_0}$ have the same behavior of the dynamics obtained from their linear approximations. Therefore, without loss of generality, consider that
$$\mathcal{P}_0(x,y)=(\ag x,x) \quad and \quad \widetilde{\mathcal{P}_0}(\widetilde{x},\widetilde{y})=(\widetilde{\ag} \widetilde{x},\widetilde{x}).$$

Hence, $y_n=(\ag^n x_n,\ag^{n-1}x_n)$ and $\widetilde{y_n}=(\widetilde{\ag}^n \widetilde{x_n},\widetilde{\ag}^{n-1}\widetilde{x_n}),$ for $n$ sufficiently big. Now, since $h: S_{Z_0}\rightarrow S_{\widetilde{Z_0}}$ is a diffeomorphism, it follows that $\widetilde{x_n}=K x_n+\er_2(x_n)$, for some $K\neq 0$. It follows that $\ag^n x_n\rightarrow \pi_1(w)\neq 0$ and $\widetilde{\ag}^n x_n\rightarrow \pi_1(\widetilde{w})/K \neq 0$, as $n\rightarrow 0$.

Now, if $\ag\neq \widetilde{\ag}$, then it follows that either $\ag^n x_n\rightarrow 0$ or $\widetilde{\ag}^n x_n\rightarrow 0$, which contradicts the fact that $\ag^n x_n\rightarrow \pi_1(w)$ and $\widetilde{\ag}^n x_n\rightarrow \pi_1(\widetilde{w})/K$. Therefore, it follows that $\ag=\widetilde{\ag}$, and the proof is complete.

\section{Conclusion and Further Directions}

In this paper, we have studied Filippov systems $Z_0=(X_0,Y_0)$ around a homoclinic-like loop $\Gamma_0$ at a fold-regular singularity under some generic conditions and we have proved that such loops are generic in one-parameter families. 

Also, we have seen that the fold line $S_{Z_0}$ of $Z_0$ connects to itself through orbits of $X_0$, $Y_0$ and $F_{Z_0}$ as a topological cylinder or a Möbius strip, giving rise to two classes of loops, $\Lambda_1^C$ and $\Lambda_1^M$, respectively. For simplicity, we considered only the class $\Lambda_1^C$ to avoid technicalities, nevertheless, we believe that similar results hold in the class $\Lambda_1^M$.

In the class $\Lambda_1^C$, we have seen that the first return map of $Z_0$ has a hyperbolic fixed point of either saddle (condition $(S)$) or nodal type (condition $(N)$). We have completely described the bifurcation diagram of $Z_0$ around $\Gamma_0$, provided that $Z_0$ satisfies $(N)$. If $Z_0$ satisfies $(S)$, we found all the bifurcating elements of $\Gamma_0$, nevertheless, the description of the bifurcation diagram remains as an open problem for this case. We conjecture that $Z_0$ has equivalent bifurcation diagram around $\Gamma_0$ for the cases $(N)$ and $(S)$, as can be seen in the Example \ref{examplebif}.

A natural extension of this work is to obtain bifurcation diagrams of Filippov systems around homoclinic-like loops passing through other kinds of $\s$-singularities (e.g. cusp-regular and fold-fold singularities). We highlight that the connection studied herein appears in the unfolding of loops passing through  a cusp-regular singularity. We hope that this study will guide us towards the comprehension of polycycles in $3D$ Filippov systems (see the planar version provided in \cite{AGN19}).

Also, if we relax the generic conditions imposed in the quasi-generic loops, one can certainly obtain interesting global behavior for Filippov systems $Z$ near $Z_0$. In fact, such a degeneracy of homoclinic-like loops at a fold-regular singularity might originate other bifurcating cycles.

% 
% 
%considerar contatos mais degenerados e estudar bifurcaçoes como no capitulo 2. Ex. cuspide é generica em dimensao 3.
%
%estudar as cascatas que podem ocorrer.
\section*{Acknowledgements}

OMLG is partially supported by the Brazilian FAPESP grant 2015/22762-5 and by the Brazilian CNPq grant 438975/2018-9. MAT is partially supported by the Brazilian CNPq grant 301275/2017-3.

	  \bibliographystyle{abbrv}
	  \bibliography{references_otavio.bib}

\end{document}